\newtheorem{theorem}{Theorem}[section]
\newtheorem{proposition}[theorem]{Proposition}
\newtheorem{corollary}[theorem]{Corollary}
\newtheorem{lemma}[theorem]{Lemma}
\theoremstyle{definition}
\newtheorem{definition}[theorem]{Definition}
\theoremstyle{remark}
\newtheorem{remark}[theorem]{Remark}
\DeclareMathOperator{\cn}{cn}
\DeclareMathOperator{\am}{am}
\newcommand{\R}{\mathbf{R}}
\newcommand{\Z}{\mathbf{Z}}
\newcommand{\N}{\mathbf{N}}
\newcommand{\vd}{\delta}
\newcommand{\B}{\mathcal{B}}
\newcommand{\K}{\mathrm{K}}
\newcommand{\Ac}{\mathcal{A}_\mathrm{clamp}} %clamped
\newcommand{\Ap}{\mathcal{A}_\mathrm{pin}}
\newcommand{\Aclosed}{\mathcal{A}_{\rm closed}}
\newcommand{\F}{\mathcal{F}}  %functional
\begin{document}

\title[Stable and minimal elastic curves]{General rigidity principles for stable and minimal elastic curves}
%\title[Rigidity for stability and minimality of elastic curves]{General rigidity principles for stability and minimality of elastic curves}
\author{Tatsuya Miura}
\address[T.~Miura]{Department of Mathematics, Tokyo Institute of Technology, 2-12-1 Ookayama, Meguro-ku, Tokyo 152-8551, Japan}
\email{miura@math.titech.ac.jp}

\author{Kensuke Yoshizawa}
\address[K.~Yoshizawa]{Institute of Mathematics for Industry, Kyushu University, 744 Motooka, Nishi-ku, Fukuoka 819-0395, Japan; 
present address: Faculty of Education, Nagasaki University, 1-14 Bunkyo-machi, Nagasaki, 852-8521, Japan}
\email{k-yoshizaw@nagasaki-u.ac.jp%k-yoshizawa@imi.kyushu-u.ac.jp
}

\keywords{Stability; elastica; $p$-elastica; boundary value problem; bending energy.}
\subjclass[2020]{49Q10, 53A04.}

\date{\today}

\begin{abstract}
  For a wide class of curvature energy functionals defined for planar curves under the fixed-length constraint, we obtain optimal necessary conditions for global and local minimizers.
  Our results extend Maddocks' and Sachkov's rigidity principles for Euler's elastica by a new, unified and geometric approach.
  This in particular leads to complete classification of stable closed $p$-elasticae for all $p\in(1,\infty)$ and of stable pinned $p$-elasticae for $p\in(1,2]$.
  Our proof is based on a simple but robust `cut-and-paste' trick without computing the energy nor its second variation, which works well for planar periodic curves but also extends to some non-periodic or non-planar cases.
  An analytically remarkable point is that our method is directly valid for the highly singular regime $p\in(1,\frac{3}{2}]$ in which the second variation may not exist even for smooth variations.
\end{abstract}

\maketitle

\setcounter{tocdepth}{1}
\tableofcontents

\section{Introduction}\label{sectintroduction}\label{sectintroduction}

In variational theory it is commonly important to detect locally or globally minimal critical points in order to obtain practical solutions or effective inequalities.
Here and hereafter, as usual, an element $x\in X$ is called \emph{global minimizer} (resp.\ \emph{local minimizer}) of a functional $F:X\to[0,\infty]$ if $F(x)\leq F(x')$ holds for all $x'\in X$ (resp.\ $x'\in U$, where $U$ is some neighborhood of $x$ in $X$).
We also often refer to global and local minimality as \emph{minimality} and \emph{stability}, respectively.

In this paper we focus on variational problems involving curvature of planar curves.
More precisely we consider an energy functional of the form
\begin{equation}\notag%\label{eq:functionalF}
    \F(\gamma) := \int_0^L f\big( |k| \big) ds
\end{equation}
defined for planar curves $\gamma:[0,L]\to\R^2$, where $s$ denotes the arclength parameter, $k$ the signed curvature, and $f:[0,\infty)\to[0,\infty]$ a nonnegative Borel function with additional properties specified later.

Our class of functionals will cover at least the standard \emph{bending energy} $f(x)=x^2$ as well as the \emph{$p$-bending energy} $f(x)=x^p$ for $p\in(1,\infty)$.
The bending energy is introduced by D.\ Bernoulli and studied by L.\ Euler in the 18th century 
(see \cite{Tru83, Lev} %[history] 
for the history and 
\cite{LLbook, APbook} %[physical texts] 
for physical backgrounds) but still studied extensively from mathematical points of view; see e.g.\ \cite{Sin, Sac08_2, Miura20, MPP21} and references therein.
The $p$-bending energy has interesting analytic and geometric features in its own right and also appears in various contexts; see e.g.\ \cite{nabe14, FKN18, LP20, NP20, OPW20, Poz20, SW20, BHV, BVH, LP22, Poz22, GPT23, OW23, DMOY, MYarXiv2203, MYarXiv2209}.
The case of polynomial $f$ was also studied \cite{AGM, Hua04} (see also \cite{Garay08}).
A relevant model is used for analyzing DNA cyclization \cite{DSSVV05}.
Throughout this paper, we will impose the fixed-length constraint on curves, which is indispensable in the standard theory of bending rods or plates.
The type of boundary condition is also important in the definition of stability and minimality.
We will address not only the typical \emph{clamped boundary condition}, which fixes the endpoints up to first order, but also the zeroth-order counterpart called \emph{pinned boundary condition}.

The typical cases to which our theory applies are thus the classical problem of Euler's \emph{elastica} 
\cite{Euler} ($p=2$) as well as its $L^p$-counterpart called \emph{$p$-elastica} \cite{nabe14, LP22, MYarXiv2203, MYarXiv2209}, which is defined as a fixed-length critical point of the $p$-bending energy
$$\B_p[\gamma]:=\int_0^L|k|^pds.$$
On the level of critical points, if $p=2$, all solutions are classified by Euler and later parametrized in terms of Jacobian elliptic integrals and functions by Saalsch\"utz (cf.\ \cite{Lev,Love}).
The case of $p\neq2$ is known be much more delicate mainly because the Euler--Lagrange equation involves a nonlinear diffusion term which yields various new phenomena such as the loss of regularity.
However, the authors recently succeeded in extending the above classification to planar $p$-elasticae for a general power $p\in(1,\infty)$ by using known $p$-elliptic integrals and introducing new $p$-elliptic functions \cite{MYarXiv2203}.
Going into boundary value problems, one usually aims at (i) finding all critical points, (ii) finding all global minimizers, and (iii) finding all local minimizers for given boundary data.
These three problems involve additional difficulties, which are generally independent.
For some well-prepared boundary data, the problems (i) and (ii) are completely solved even for all $p\in(1,\infty)$; see \cite{MYarXiv2203} for closed $p$-elasticae and \cite{MYarXiv2209} for pinned $p$-elasticae.
Concerning (ii) when $p=2$, see also \cite{Miura20} for straightened boundary data and \cite{MMR23} for the cuspidal case.
On the other hand, to the authors' knowledge, the problem (iii) is solved only for closed elasticae ($p=2$) \cite{LS_85, Sac12, AKS}.

In this paper we first reveal general rigidity principles (necessary conditions) induced by stability and minimality for a wide class of functionals, including the $p$-bending energy as a special example.
In particular, those results together with our previous work \cite{MYarXiv2203,MYarXiv2209} lead to complete classification of the stability of closed $p$-elasticae for $p\in(1,\infty)$ and pinned $p$-elasticae for $p\in(1,2]$, thus solving the only remaining problem (iii) above.
We also give an effective rigidity result for pinned $p$-elasticae with $p\in(2,\infty)$.
The contents of this paper were briefly announced in \cite{Miura_RIMS}.

To the authors' knowledge, our study provides the first stability analysis for planar $p$-elasticae.
Recently, Gruber--P\'ampano--Toda \cite{GPT23} proved instability of closed free $p$-elasticae in the sphere by a second variation approach.
Their study is in itself very interesting but substantially different from our focus; besides the difference of the ambient spaces, our study deals with the length constraint and does not assume any restriction on the regularity nor on the inflection points of critical points, which makes the stability analysis much more delicate.
In particular, we point out (in Appendix \ref{app:second_variation}) that in the highly singular regime $p\in(1,\frac{3}{2}]$, if a $p$-elastica has an inflection point, then \emph{the second variation may not exist even for smooth variations}.
Here, instead of standard second variation arguments, we propose a new but very simple `cut-and-paste' trick, which turns out to be surprisingly robust and indeed directly works even for inflectional $p$-elasticae with $p\in(1,\frac{3}{2}]$.

Last but not least, even in the classical case $p=2$ for Euler's elastica, our approach seems to be the first attempt to explain the rigidity by global minimality and by local minimality in a unified manner.

This paper is organized as follows:
In Section~\ref{sect:main_results} we state our main results, both in terms of general principles and applications to $p$-elasticae, and explain the key idea of our proof, while mentioning some previous studies more precisely.
After short preliminaries in Section~\ref{sect:pre}, we prove our general principles in the clamped case (Theorems~\ref{thm:planar_global} and \ref{thm:planar_local}) in Section~\ref{sect:clamped}, and in the pinned case (Theorems \ref{thm:pinned2} and \ref{thm:pinned1*}) in Section~\ref{sect:pinned}.
In Section~\ref{sect:p-elastica} we discuss applications to planar $p$-elasticae.
Finally we apply our method to some spatial elasticae in Section~\ref{sect:spatial}.

\section{Main results}\label{sect:main_results}

We first very briefly review some of relevant previous studies in order to motivate our main results.

In 1906, Born developed stability theory for Euler's elastica \cite{Born}.
Among other results, he found the general principle that if a clamped planar elastica has no zero of the curvature (i.e., locally convex), then it is stable with respect to smooth perturbations.
Born also addresses more general elasticae, albeit with the help of numerical computations, and gave the first detailed comparisons with experiments.

Maddocks \cite{Maddocks1981,Maddocks1984} developed linear stability analysis for Euler's elastica (with possibly nonuniform bending rigidity), which explains physically natural (in)stability for various boundary conditions, extending some previous results cited therein.
For Euler's elastica, it is shown that `higher modes' (with many inflection points) are basically unstable for both clamped and pinned boundary conditions.
In particular, the results for the pinned boundary condition are summarized as follows:
\begin{itemize}
    \item[(M1)] The pinned first mode loses its stability when the endpoints meet.
    \item[(M2)] The pinned higher modes are always unstable.
\end{itemize}
A schematic diagram for (M1) is given in Figure \ref{fig:Maple1}.
The higher modes in (M2) correspond to the curves $\gamma_\mathrm{arc}^N,\gamma_\mathrm{loop}^N$ with $N\geq2$ in Figure \ref{fig:Maple2}.
In fact, it is rigorously known (cf.\ \cite{Love,Ydcds}) that all the pinned elasticae are classified as in Figure \ref{fig:Maple2}.
Combined with this classification, Maddocks' results imply that the embedded convex arc $\gamma^1_\mathrm{arc}$ is the only stable one (in the sense of linear stability).
Langer--Singer \cite{LS_85} developed a different approach, which not only covers spatial elasticae with special symmetry but also recovers some of Maddocks' results.

%%%%%%%%%%%%%%%%%%%%%%%%%%%%%% 
\begin{figure}[htbp]
\centering
\includegraphics[width=11cm]{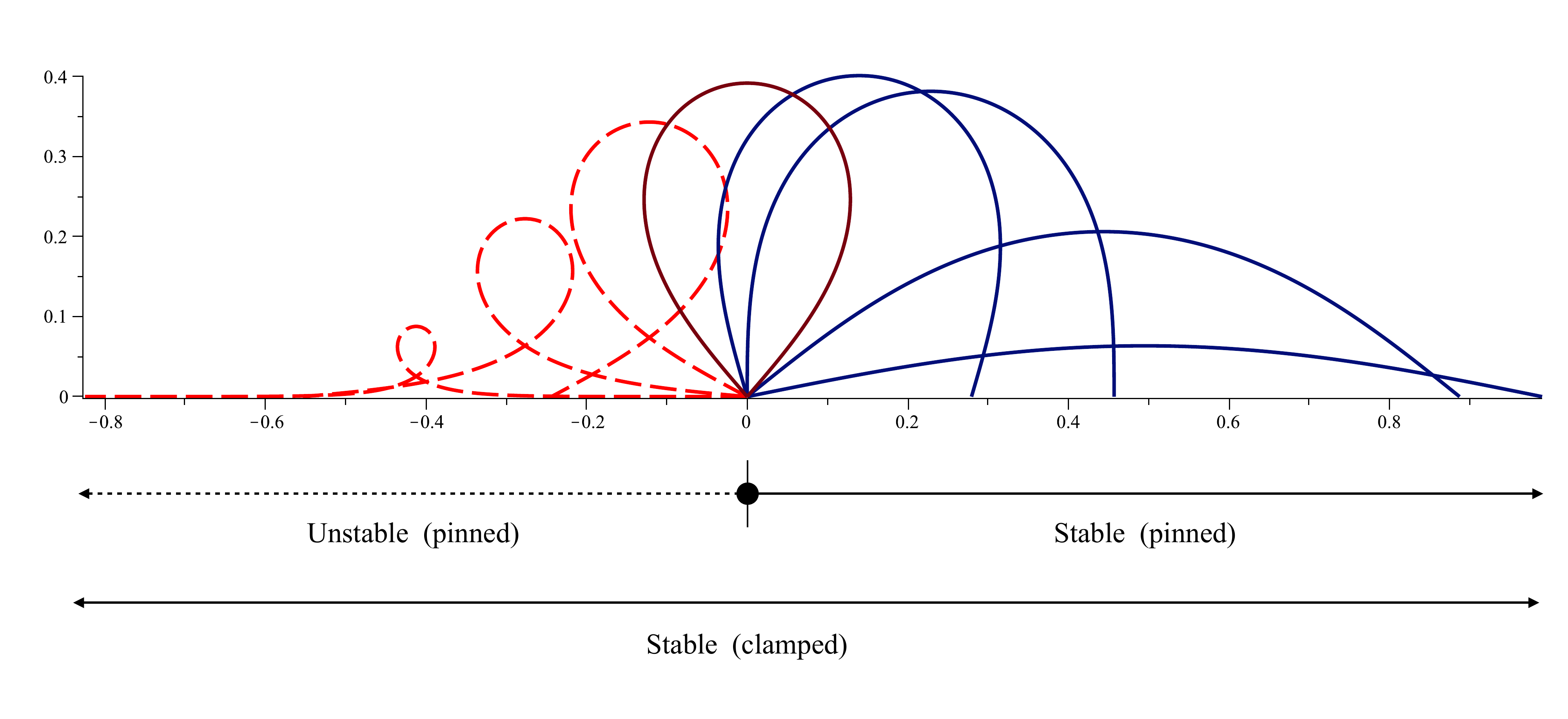} 
\caption{Stability of convex arcs and loops.}
\label{fig:Maple1}
\end{figure}
%%%%%%%%%%%%%%%%%%%%%%%%%%%%%% 
\begin{figure}[htbp]
\centering
\includegraphics[width=70mm]{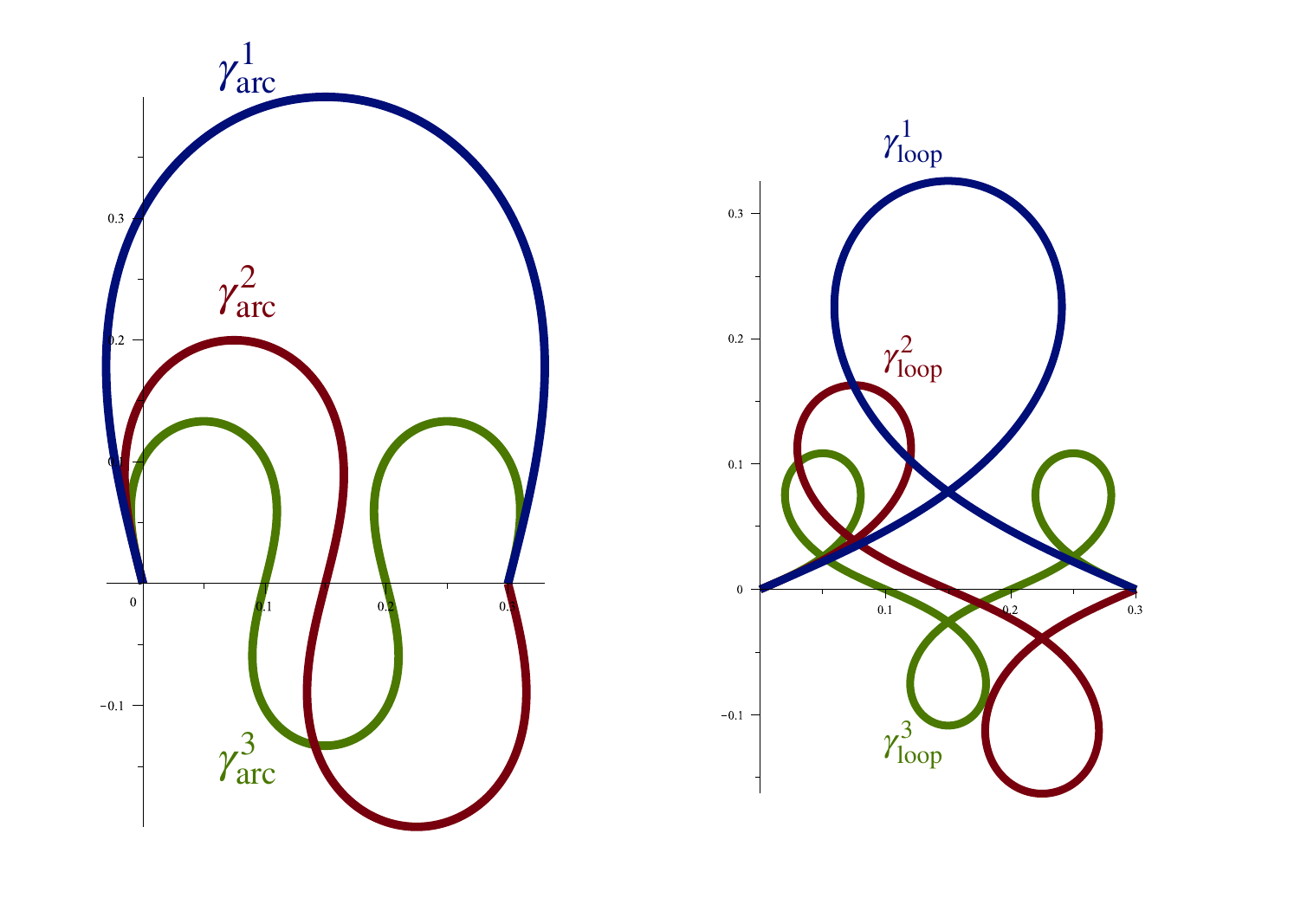} 
\caption{Pinned elasticae of arc type (left) and of loop type (right).}
\label{fig:Maple2}
\end{figure}
%%%%%%%%%%%%%%%%%%%%%%%%%%%%%% 

In a recent series of papers \cite{Sac08_2,Sac08_3, SL10, Sac12, SS14}, focusing on planar elasticae subject to the clamped boundary condition, Sachkov and his coauthors developed a more detailed theory of not only stability but also minimality by an optimal control approach.
In particular, Sachkov found optimal rigidity principles in terms of the natural periodic structure of elasticae, cf.\ statements (1) and (3.3) (or (3.5)) in \cite[Theorem 5.1]{Sac08_3}.
Here the periodicity means that the tangent direction is periodic; the curve itself is generically quasi-periodic, cf.\ Figure~\ref{fig:2-Elastiace}.
Sachkov's principles are roughly summarized as follows:
\begin{itemize}
    \item[(S1)] If a clamped elastica is minimal, then it does not exceed one period.
    \item[(S2)] If a clamped elastica is stable, then it does not contain three inflection points in its interior.
\end{itemize}
Principle (S1) is trivial for closed elasticae in view of scaling --- any closed curve has less bending energy than its multiple covering of same length --- but is not trivial for non-closed elasticae %because in general for given boundary data the possible critical points may have different periodic structures and hence 
since simple scaling arguments do not work.
Principle (S2) is in line with Maddocks' instability result for higher modes.
Both (S1) and (S2) are optimal (cf.\ Remarks \ref{rem:optimal-global} and \ref{rem:optimal-local}) and very useful for detecting (local) minimizers.

%%%%%%%%%%%%%%%%%%%%%%%%%%%%%% 
\begin{figure}[htbp]
\centering
\includegraphics[width=125mm]{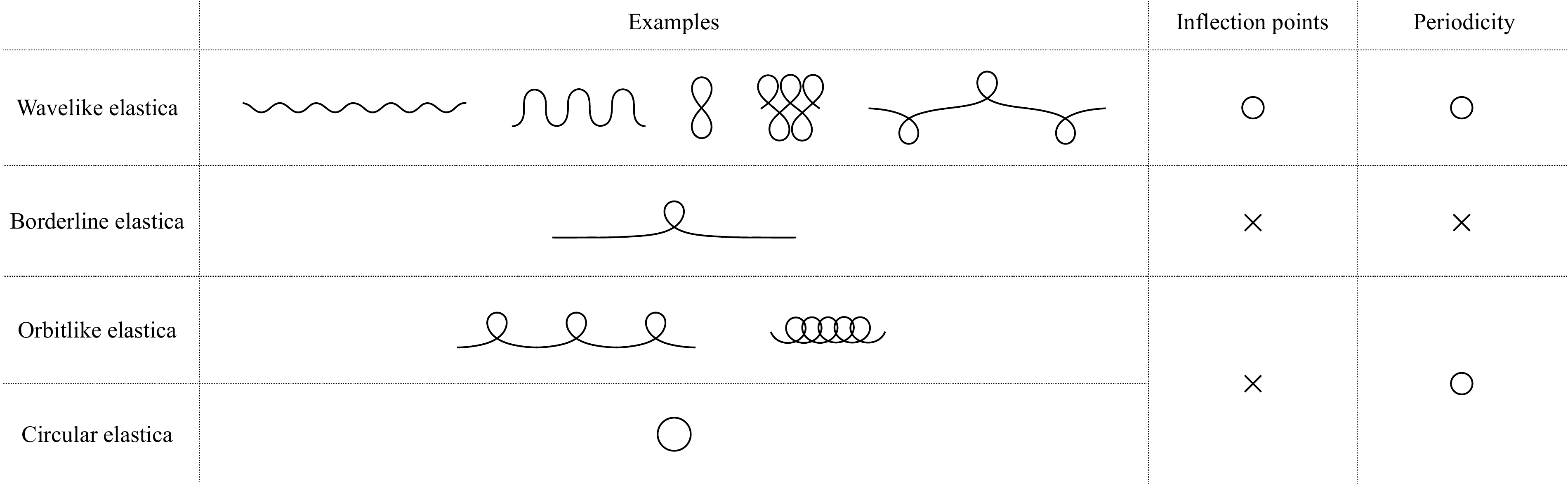}
\caption{Basic patterns of elasticae.}
\label{fig:2-Elastiace}
\end{figure}
%%%%%%%%%%%%%%%%%%%%%%%%%%%%%% 

Our main results extend all (M1), (M2), (S1), (S2) to a wide class of energy functionals (in the natural Sobolev framework).
Such extensions are highly nontrivial even for $p$-elasticae, since both Maddocks' and Sachkov's methods importantly use the fact that the curvature term is quadratic.
Indeed, Sachkov's argument depends on explicit computations only valid for the standard bending energy; Maddocks' argument is based on a rather `representation-free' linear stability analysis (which makes it possible to address non-uniform cases) but the rigorous application of linear stability analysis on the nonlinear level would be significantly delicate even for $p$-elasticae with $p\neq2$ due to the lack of Hilbert structures, which in particular leads to a generic loss of regularity.

The rest of this section proceeds as follows:
We first present general principles in the clamped case corresponding to (S1) and (S2) in Section \ref{subsect:general_clamped}, because of the simplicity of the statements and also their applicability to \emph{any} boundary condition, cf.\ Remark \ref{rem:BCfree}.
Then we turn to the pinned case corresponding to (M1) and (M2) in Section \ref{subsect:general_pinned}.
In Section \ref{subsect:p-elastica}, we collect some typical consequences in $p$-elastica theory.
Finally we explain the key idea of our proof in Section \ref{sect:trick}.

\subsection{General rigidity principles: Clamped case}\label{subsect:general_clamped}

Let $p\in(1,\infty)$, $L>0$ and $P_0,P_1,V_0,V_1\in\R^2$ with $|P_0-P_1|<L$ and $|V_0|=|V_1|=1$.
Define
$$W_\mathrm{arc}^{2,p}(0,L;\R^2):=\Set{\gamma \in W^{2,p}(0,L;\R^2) |\, |\gamma'|\equiv1 },$$
the set of planar arclength parametrized Sobolev curves of length $L$, and then
\begin{equation*}
    \Ac:=\{ \gamma\in W_\mathrm{arc}^{2,p}(0,L;\R^2) \mid \gamma(0)=P_0,\ \gamma(L)=P_1,\ \gamma'(0)=V_0,\ \gamma'(L)=V_1 \},
\end{equation*}
the admissible set subject to the clamped boundary condition, equipped with the relative topology induced by the $W^{2,p}$-norm.

The first result extends (S1) in a contrapositive form.
Our key hypothesis is the following property of regularity-improvement:
\begin{equation}\label{H1*}\tag{H1}
    \text{If $\gamma$ is a global minimizer of $\F$ in $\Ac$, then $\gamma\in C^2([0,L];\R^2)$.}
\end{equation}
Hereafter we often abbreviate $\gamma\in C^2([0,L];\R^2)$ as $\gamma\in C^2$.
This property is naturally expected whenever $f$ is not too wild, since any minimizer weakly solves the Euler--Lagrange equation.
Under this hypothesis we obtain

\begin{theorem}[Rigidity of clamped global minimizers]\label{thm:planar_global}
    Suppose \eqref{H1*} holds.
    If $\gamma\in\Ac$ has two points
    $0\leq  s_1 < s_2 \leq L$ with $ s_2-s_1<L$ such that
      \begin{align} \label{eq:0831-2}
      \gamma'(s_1)=\gamma'(s_2) \ \  \text{and} \ \  -k(s_1) \ne k(s_2),
    \end{align}
    then $\gamma$ is not a global minimizer of $\F$ in $\Ac$.
\end{theorem}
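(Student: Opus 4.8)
The plan is to argue by contradiction, using a single \emph{cut-and-paste} operation together with the regularity hypothesis \eqref{H1*}. Suppose $\gamma\in\Ac$ satisfies \eqref{eq:0831-2} but, contrary to the claim, is a global minimizer of $\F$ in $\Ac$. By \eqref{H1*} we then have $\gamma\in C^2$, so the signed curvature $k$ is continuous and the pointwise values $k(s_1),k(s_2)$ are well defined.

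Set $V:=\gamma'(s_1)=\gamma'(s_2)$ and $c:=\tfrac12\big(\gamma(s_1)+\gamma(s_2)\big)$. I would replace the sub-arc $\gamma|_{[s_1,s_2]}$ by the image of its \emph{reversal} under the point reflection $x\mapsto 2c-x$ (the rotation by angle $\pi$ about $c$), keeping the rest of the curve fixed:
\[
\tilde\gamma(s):=
\begin{cases}
\gamma(s), & s\in[0,s_1],\\
2c-\gamma(s_1+s_2-s), & s\in[s_1,s_2],\\
\gamma(s), & s\in[s_2,L].
\end{cases}
\]
The routine verifications are: (a) $|\tilde\gamma'|\equiv1$, and at $s=s_1$ and $s=s_2$ both one-sided position values agree (this is precisely why $c$ is the midpoint) and both one-sided tangents equal $V$ (here \eqref{eq:0831-2} is used), so $\tilde\gamma\in C^1$; since each of the three pieces is $W^{2,p}$ and the junctions are $C^1$, in fact $\tilde\gamma\in W_\mathrm{arc}^{2,p}(0,L;\R^2)$; (b) the data at $s=0$ and $s=L$ are untouched, so $\tilde\gamma\in\Ac$; (c) a point reflection is an orientation-preserving isometry, hence preserves signed curvature, while reversing the parameter flips its sign, so the signed curvature $k_{\tilde\gamma}$ of the middle piece of $\tilde\gamma$ is $s\mapsto -k(s_1+s_2-s)$; consequently $|k_{\tilde\gamma}|$ is a rearrangement of $|k|$, and the change of variables $u=s_1+s_2-s$ gives $\F(\tilde\gamma)=\F(\gamma)$.

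Thus $\tilde\gamma$ is itself a global minimizer, so \eqref{H1*} forces $\tilde\gamma\in C^2$; in particular $k_{\tilde\gamma}$ is continuous on $[0,L]$. However, at an interior junction the one-sided limits of $k_{\tilde\gamma}$ are $k(s_1)$ and $-k(s_2)$ (at $s_1$), respectively $-k(s_1)$ and $k(s_2)$ (at $s_2$); since $s_2-s_1<L$, at least one of $s_1>0$, $s_2<L$ holds, so at least one junction is interior, and continuity of $k_{\tilde\gamma}$ there would force $k(s_1)=-k(s_2)$. This contradicts \eqref{eq:0831-2}, and hence $\gamma$ is not a global minimizer.

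The one conceptual point to appreciate is that the cut-and-paste does \emph{not} lower the energy --- it cannot, since $\F$ depends only on $|k|$ --- but instead produces an equal-energy competitor that is generically only $C^1$, not $C^2$, at the seam. So the contradiction is extracted not from an energy comparison but from the tension between global minimality and the regularity improvement \eqref{H1*}; this indirect use of \eqref{H1*} is the heart of the argument. Everything else --- membership $\tilde\gamma\in\Ac$, the curvature transformation law under reflection-plus-reversal, and the fact that a $C^1$-matched concatenation of $W^{2,p}$ pieces is $W^{2,p}$ --- is standard bookkeeping.
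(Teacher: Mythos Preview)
Your proof is correct and essentially identical to the paper's own argument: the same point-reflection of the middle arc about the midpoint $c$ (written in the paper as a $180^\circ$ rotation), the same verification that the resulting curve lies in $\Ac$ with $\F(\tilde\gamma)=\F(\gamma)$, and the same contradiction via \eqref{H1*} and the curvature jump at a seam. The only cosmetic difference is that the paper reverses the parameterization to arrange $s_2<L$ and checks discontinuity at $s_2$, whereas you observe directly that $s_2-s_1<L$ forces at least one junction to be interior; both handle the boundary case equally well.
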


This result indeed extends (S1) because after one period ($s_2=s_1+\text{period}$) one has $\gamma'(s_1)=\gamma'(s_2)$ and $k(s_1)=k(s_2)$ so that \eqref{eq:0831-2} holds unless the curvature vanishes there.
%The above rigidity is useful to reduce the candidates of global minimizers, whose characterization is known to be quite delicate even for classical elasticae, cf.\ \cite{Miura20}.

The next result extends (S2).
To this end we will similarly suppose:
\begin{equation}\label{H1**}\tag{H1'}
    \text{If $\gamma$ is a local minimizer of $\F$ in $\Ac$, then $\gamma\in C^2$.}
\end{equation}
In addition, we need to use a certain well-periodic structure.

%%%%%%%%%%%%%%%%%%%%%%%%%%%%%%%%%%%%%%
\begin{definition}[Well-periodic curve]\label{def:well-periodic}
We call an arclength parametrized planar curve $\gamma:[0,L]\to \R^2$ \textit{well-periodic} if $\gamma$ has continuous signed curvature $k\in C([0,L])$ of the form $k(s)=\Phi(s-s_0)$, where $s_0 \in \R$, and $\Phi\in C(\R)$ is an antiperiodic odd function with antiperiod $T>0$ such that $\Set{ s\in \R | \Phi(s)=0 } = T\Z$.
We also call $T$ the antiperiod of the well-periodic curve $\gamma$.
\end{definition}
%%%%%%%%%%%%%%%%%%%%%%%%%%%%%%%%%%%%%%

This kind of periodicity naturally appears in the objective critical points thanks to the invariance of $\F$ with respect to the change of the sign of curvature as well as the orientation of parameter.
For this well-periodic class we obtain

\begin{theorem}[Rigidity of clamped local minimizers]\label{thm:planar_local}
    Suppose \eqref{H1**} holds.
    If $\gamma\in\Ac$ is well-periodic, and has three points $0\leq s_1<s_2<s_3\leq L$ with $s_3-s_1<L$ such that
    \begin{align} \label{Yeq:0417.2}
     k(s_1)=k(s_2)=k(s_3)=0,
    \end{align}
    then $\gamma$ is not a local minimizer of $\F$ in $\Ac$.
\end{theorem}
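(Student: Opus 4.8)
The plan is to argue by contradiction: assuming $\gamma$ is a local minimizer of $\F$ in $\Ac$, I will exhibit, arbitrarily $W^{2,p}$-close to $\gamma$, an admissible competitor of the same energy that is not of class $C^2$, which contradicts \eqref{H1**}. Every competitor arises from a single cut-and-paste move: excise one full antiperiod of the tangent and reglue it after a rotation by $\pi$. First record the geometry forced by well-periodicity. Writing $k(s)=\Phi(s-s_0)$ as in Definition~\ref{def:well-periodic}, the zeros of $k$ on $[0,L]$ are the points of $(s_0+T\Z)\cap[0,L]$; the three given inflection points therefore differ by integer multiples of $T$, so $s_3-s_1\ge 2T$, hence $L>2T$, and $s_1\le s_3-2T\le L-2T$, i.e.\ $s_1$ is an inflection point lying in $[0,L-2T]$. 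Two elementary facts, immediate from $\Phi$ being odd with antiperiod $T$, will be used below: $k$ has period $2T$ with $\int_0^{2T}\Phi=0$ (the tangent is genuinely $2T$-periodic), and $k$ is odd about each of its zeros, $k(\sigma+u)=-k(\sigma-u)$ whenever $k(\sigma)=0$.

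For $a\in[0,L-2T]$ let $\rho_a$ be the rotation by $\pi$ about the midpoint of the segment $[\gamma(a),\gamma(a+2T)]$, and define $\gamma_a$ to equal $\gamma$ on $[0,a]\cup[a+2T,L]$ and $s\mapsto\rho_a\big(\gamma(2a+2T-s)\big)$ on $[a,a+2T]$. Since the total turning over $[a,a+2T]$ is $\int_0^{2T}\Phi=0$ we have $\gamma'(a+2T)=\gamma'(a)$, and a short computation --- using that $\rho_a$ preserves the signed curvature while the orientation-reversal $s\mapsto 2a+2T-s$ changes its sign, and that $\rho_a$ maps $\gamma(a+2T)$ to $\gamma(a)$ --- shows that $\gamma_a$ is continuous, $C^1$, arclength parameterized, satisfies the clamped boundary conditions, and has signed curvature $-k(2a+2T-s)$ on $[a,a+2T]$. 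Thus $\gamma_a\in\Ac$, and since on $[a,a+2T]$ it differs from $\gamma$ only by a rigid motion and an orientation-reversing reparametrization, $\F[\gamma_a]=\F[\gamma]$. On the other hand the signed curvature of $\gamma_a$ jumps at $a$ from $k(a)$ to $-k(a+2T)=-k(a)$ by the $2T$-periodicity; hence $\gamma_a\notin C^2$ whenever $k(a)\ne0$, i.e.\ whenever $a$ is not an inflection point.

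Now let $a\to s_1$ through non-inflection points of $[0,L-2T]$; this is possible because $[0,L-2T]$ is a non-degenerate interval containing $s_1$ while inflection points are isolated. Because $k$ is odd about the inflection point $s_1+T$ and $[s_1,s_1+2T]\subseteq[0,L]$, the curvature $-k(2a+2T-s)$ reduces to $k(s)$ at $a=s_1$, so $\gamma_{s_1}=\gamma$; since each $\gamma_a$ is determined by its $L^p$-curvature together with the $a$-independent data $\gamma_a(0)=\gamma(0)$, $\gamma_a'(0)=\gamma'(0)$, and this curvature depends continuously on $a$, we get $\gamma_a\to\gamma$ in $W^{2,p}$. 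Consequently, for $a$ sufficiently close to $s_1$ the non-$C^2$ curve $\gamma_a$ lies in a prescribed neighborhood of $\gamma$ and satisfies $\F[\gamma_a]=\F[\gamma]\le\F[\gamma']$ for all admissible $\gamma'$ in that neighborhood; thus $\gamma_a$ is itself a local minimizer, and \eqref{H1**} forces $\gamma_a\in C^2$, the desired contradiction.

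The individual steps are short. The energy identity and the curvature jump are essentially automatic, and the verification that $\gamma_a\in\Ac$ is routine Frenet bookkeeping; the point that needs care --- and the precise reason three inflection points are required rather than two --- is the last one: a cut-and-paste that is close to the identity must use a full antiperiod one endpoint of which can be slid onto an inflection point, where the paste collapses to $\gamma$ itself, and three periodically spaced zeros of $k$ are exactly what guarantees both $L>2T$ and $s_1\le L-2T$. I therefore expect the main obstacle to be establishing the $W^{2,p}$-convergence $\gamma_a\to\gamma$ cleanly --- equivalently, the continuous dependence of the gluing on $a$ --- rather than any single algebraic manipulation.
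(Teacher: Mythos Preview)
Your proof is correct and follows essentially the same cut-and-paste strategy as the paper's: rotate one full antiperiod of the tangent by $\pi$ about the midpoint of its endpoints, with the cut points sliding toward an interior inflection point so that the resulting family converges to $\gamma$ in $W^{2,p}$ while each member has a curvature jump and hence fails to be $C^2$. The only cosmetic differences are that the paper first reduces to three \emph{consecutive} zeros with $s_3<L$ (by reversing the parameterization if necessary) and then uses the discrete shifts $s_1+\tfrac1j,\ s_3+\tfrac1j$ in place of your continuous parameter $a\to s_1$.
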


This result directly extends (S2), and gives an effective rigidity for `inflectional' critical points.

\begin{remark}
    Theorem \ref{thm:planar_global} (resp.\ Theorem \ref{thm:planar_local}) is optimal in the sense that if $s_2-s_1=L$ (resp.\ $s_3-s_1=L$) then the assertion may fail even for classical elasticae, cf.\ Remark~\ref{rem:optimal-global} (resp.\ Remark~\ref{rem:optimal-local}).
\end{remark}

\begin{remark}\label{rem:BCfree}
    Our principles immediately extend to more general cases.
    For example, \emph{regardless} the choice of boundary conditions, the same results hold true away from the endpoints; more precisely, for a given minimal (resp.\ stable) curve $\gamma:[0,L]\to\R^2$ subject to any boundary condition, the restriction $\gamma|_{[\delta_1,L-\delta_2]}$ with $\delta_1,\delta_2>0$ must be minimal (resp.\ stable) under the clamped boundary condition and hence satisfy the above assertion.
    In addition, our principles also directly work in the case of \emph{length-penalized} problems (i.e., critical points of the functional $\F+\lambda\mathcal{L}$ among length-unconstrained admissible curves, where $\lambda\in\R$ and $\mathcal{L}$ denotes the length) as well as the case of higher codimensions (i.e., planar curves in $\R^n$ with any $n\geq2$). 
\end{remark}

\subsection{General rigidity principles: Pinned case}\label{subsect:general_pinned}

Now we turn to the pinned boundary condition.
More precisely, for $L>0$ and $P_0,P_1\in\R^2$ such that $|P_0-P_1|<L$, we consider the admissible class \begin{equation*}
    \Ap:=\{ \gamma\in W_\mathrm{arc}^{2,p}(0,L;\R^2) \mid \gamma(0)=P_0,\ \gamma(L)=P_1 \}.
\end{equation*}
In this class, a critical point often satisfies the so-called natural boundary condition that the curvature vanishes at the endpoints, thanks to the arbitrariness of first-order variations at the endpoints.
In view of this condition, it is natural to consider the following subclass of well-periodic curves, which we call \emph{$\frac{m}{2}$-fold} curves; the number $m$ corresponds to the superscripts in Figure \ref{fig:Maple2}, and roughly speaking counts the modes with the convention that `$1$-fold' means the one-period of the curvature (i.e., twice the antiperiod).

%%%%%%%%%%%%%%%%%%%%%%%%%%%%%%%%%%%%%%
\begin{definition}[$\frac{m}{2}$-Fold well-periodic curve]\label{def:m-fold}
For $m\in \N$, we say that a well-periodic curve $\gamma:[0,L]\to \R^2$ is \emph{$\frac{m}{2}$-fold} if $k(0)=0$ and $T=L/m$ hold, where $T$ is the antiperiod of $\gamma$ defined in Definition \ref{def:well-periodic}.
\end{definition}
%%%%%%%%%%%%%%%%%%%%%%%%%%%%%%%%%%%%%%

We first state our instability result for higher modes, extending (M2).
Here we will suppose both the regularity-improvement and the natural boundary condition:
\begin{align} \tag{H2} \label{H2}
    \begin{split}
        \text{If $\gamma$ is a local minimizer of $\mathcal{F}$ in $\Ap$, then $\gamma\in C^2$ and $k(0)=k(L)=0$.}
    \end{split}
\end{align}

\begin{theorem} \label{thm:pinned2}
Suppose \eqref{H2} holds.
Let $\gamma \in \Ap$ be a $1$-fold well-periodic curve. 
Then $\gamma$ is not a local minimizer of $\F$ in $\Ap$.
\end{theorem}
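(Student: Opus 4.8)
The plan is to deduce the statement from the clamped case, Theorem~\ref{thm:planar_local}, by reflecting $\gamma$ through one endpoint so as to create three inflection points in the interior; the natural boundary condition supplied by \eqref{H2} is what makes this reflection smooth.

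First I would record the structure forced by ``$1$-fold well-periodic.'' By Definitions~\ref{def:well-periodic}--\ref{def:m-fold} the signed curvature is $k=\Phi(\cdot-s_0)$ with $\Phi$ odd, antiperiodic of antiperiod $T=L/2$, and $\{\Phi=0\}=(L/2)\Z$; since $k(0)=0$ we may take $s_0=0$, so $k$ vanishes exactly at $0,L/2,L$, is odd about each of these, and obeys $k(\cdot+L/2)=-k$. Integrating $\theta'=k$ gives $\theta(L)=\theta(0)$ (hence $\gamma'(0)=\gamma'(L)$) and shows that $\theta$ is even about $s=L/2$, so that $\gamma$ is invariant under the point reflection through $M:=\gamma(L/2)$ followed by $s\mapsto L-s$; in particular $M=\tfrac12(P_0+P_1)$. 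Beyond these facts I would only use \eqref{H2}.

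Arguing by contradiction, suppose $\gamma$ is a local minimizer of $\F$ in $\Ap$; then $\gamma\in C^2$ and $k(0)=k(L)=0$ by \eqref{H2}. Since $k$ is odd and vanishes at $0$, define $\tilde\gamma$ on $[-L,L]$ by $\tilde\gamma|_{[0,L]}=\gamma$ and $\tilde\gamma(s):=2P_0-\gamma(-s)$ for $s\in[-L,0]$. Then $\tilde\gamma\in C^2([-L,L];\R^2)$, its signed curvature is the odd antiperiodic extension of $k$, so $\tilde\gamma$ is again well-periodic with antiperiod $L/2$ and with inflection set $\{-L,-L/2,0,L/2,L\}$; in particular $-L/2,0,L/2$ are three inflections in the open interval with $L/2-(-L/2)=L<2L=\mathrm{length}(\tilde\gamma)$. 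Relabelling the parameter interval as $[0,2L]$, $\tilde\gamma$ is an admissible element of $\Ac$ for the clamped data $(2P_0-P_1,\,P_1,\,\gamma'(L),\,\gamma'(L))$ — here $|2P_0-P_1-P_1|=2|P_0-P_1|<2L$ — so Theorem~\ref{thm:planar_local}, applied to $\tilde\gamma$ with the inflection triple $\{-L/2,0,L/2\}$, produces a nearby competitor $\Xi\in\Ac$ with $\F(\Xi)<\F(\tilde\gamma)=2\F(\gamma)$. If $\Xi$ is chosen point-symmetric about $P_0$, then $\Xi|_{[0,L]}\in\Ap$ (it starts at $P_0$ by symmetry, ends at $P_1$), lies near $\gamma$, and has energy $\tfrac12\F(\Xi)<\F(\gamma)$, contradicting local minimality of $\gamma$. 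This finishes the argument modulo that choice of $\Xi$.

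The main obstacle is precisely to make the clamped competitor equivariant: a general competitor for $\tilde\gamma$ need not be point-symmetric about $P_0$ (indeed $\Xi(0)$ need not be $P_0$), so it does not restrict to a pinned competitor for $\gamma$, and one cannot invoke Theorem~\ref{thm:planar_local} as a black box. I would instead carry out the cut-and-paste surgery underlying its proof in a form manifestly commuting with the point reflection through $P_0$: that surgery acts near the three interior inflections, and the triple $\{-L/2,0,L/2\}$ is permuted by the reflection, so one can perform it simultaneously and in mirror-image fashion near $\pm L/2$ and symmetrically near $0$, producing a symmetric $\Xi$. The place where the pinned hypothesis is genuinely used is in keeping $\tilde\gamma$ and the surgered curve admissible and $C^2$: the natural condition $k(0)=k(L)=0$ is exactly what lets the reflected pieces be glued with continuous curvature, while the freedom in the tangents at $P_0,P_1$ gives the surgery the room that the bare clamped statement (which is optimal, and false when $s_3-s_1=L$) does not allow. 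I expect this equivariance-and-admissibility bookkeeping, rather than any new estimate, to be where the real work lies.
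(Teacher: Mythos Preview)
Your approach differs from the paper's and has a genuine gap at precisely the step you flag as ``equivariance-and-admissibility bookkeeping.''  The surgery underlying Theorem~\ref{thm:planar_local} rotates the subarc over $[s_1+\tfrac1j,s_3+\tfrac1j]$ through $180^\circ$ about the midpoint $c_j=\tfrac12\bigl(\tilde\gamma(s_1+\tfrac1j)+\tilde\gamma(s_3+\tfrac1j)\bigr)$.  If you force this to commute with the point reflection $s\mapsto-s$, $x\mapsto 2P_0-x$ of your doubled curve, you are led to a symmetric cut interval $[-a,a]$; but then $c_j=\tfrac12(\tilde\gamma(-a)+\tilde\gamma(a))=P_0$ by the very symmetry of $\tilde\gamma$, and the $180^\circ$ rotation about $P_0$ sends $\tilde\gamma(s)$ to $2P_0-\tilde\gamma(-s)=\tilde\gamma(s)$: the equivariant surgery is the identity.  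The alternative of performing two mirror-image surgeries on disjoint windows is also blocked, since each window must have length $s_3-s_1=2T=L$ and therefore cannot fit inside $(0,L)$ or $(-L,0)$.  So no nontrivial equivariant competitor arises this way.  (There is also a secondary slip: the surgery produces $\F(\Xi_j)=\F(\tilde\gamma)$, not strict inequality, so even on success you would still have to route the contradiction through \eqref{H2} rather than through an energy drop as you write.)

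The paper's proof avoids doubling entirely and is much more direct.  Using $\gamma'(0)=\gamma'(L)$ (Lemma~\ref{lem:0517}), one sets
\[
\gamma_j:=P_0\oplus\gamma|_{[1/j,\,L]}\oplus\gamma|_{[0,\,1/j]}\in\Ap,
\]
a cyclic shift of the parameter by $1/j$ re-anchored at $P_0$.  This keeps $\F(\gamma_j)=\F(\gamma)$ and $\gamma_j\to\gamma$ in $W^{2,p}$, but the curvature at the left endpoint is $k_j(0)=k(1/j)\neq0$, which directly violates the natural boundary condition in \eqref{H2}.  Thus the extra freedom of the pinned problem is exploited not via symmetry of an extended curve but simply by observing that a translated window of $\gamma$ still lies in $\Ap$ while failing $k(0)=0$.
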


We then address the first mode, extending (M1).
In fact, our proof of this part will be based on a different mechanism from all the previous cases, and thus we will suppose a rather different type of hypothesis:
%%%%%%%%%%%%%%%%%%%%%%%%%%%%%%%%%%%%%%
\begin{align} \tag{H3} \label{H3}
\text{
$f$ is strictly convex on $[0,\infty)$. 
}
\end{align}
%%%%%%%%%%%%%%%%%%%%%%%%%%%%%%%%%%%%%%
Under this hypothesis we show that if a curve has a specific loop structure as in the left part of Figure \ref{fig:Maple1} or such as $\gamma^1_\mathrm{loop}$ in Figure \ref{fig:Maple2}, then it is unstable.

\begin{theorem} \label{thm:pinned1*}
Suppose \eqref{H3} holds.
Let $\gamma\in \Ap$ be a $\frac{1}{2}$-fold well-periodic curve. 
If $\gamma$ is not injective on $(0,L)$ and if $\gamma'(0) \cdot( P_1-P_0 ) >0$,
then $\gamma$ is not a local minimizer of $\F$ in $\Ap$.
\end{theorem}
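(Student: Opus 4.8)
The plan is to use non‑injectivity together with the sign hypothesis to locate a self‑intersection at which a reflection‑type cut‑and‑paste produces an equal‑energy competitor that is merely $C^1$, carrying a genuine jump of curvature, and then to rule out minimality by means of the strict convexity~\eqref{H3}.

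First I would normalize the situation. Since $\gamma$ is $\tfrac12$‑fold well‑periodic, after a reparametrization one has $k=\Phi$ on $[0,L]$ with $\Phi$ odd, antiperiodic of antiperiod $L$, and $\Phi>0$ on $(0,L)$; hence $\gamma$ is strictly locally convex on $(0,L)$ and invariant under the reflection $R$ across the line $\ell_0$ through $\gamma(L/2)$ orthogonal to $P_1-P_0$, with $P_1=R(P_0)$ and $\gamma'(L)=-R(\gamma'(0))$. The meaning of $\gamma'(0)\cdot(P_1-P_0)>0$ is that both endpoint tangents point with a positive component along $P_1-P_0$, so $\gamma$ leaves $P_0$ toward $P_1$ and exits past $P_1$; combined with the turning balance forced on a $\tfrac12$‑fold curve, this is precisely what makes $\gamma$ curl enough to have a self‑intersection $\gamma(s_1)=\gamma(s_2)=:Q$, $0<s_1<s_2<L$, which is \emph{not} the symmetric one lying on $\ell_0$. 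For such an asymmetric crossing the curvature profile of the loop $\mu:=\gamma|_{[s_1,s_2]}$ is not symmetric about its midpoint, so $\mu$ is not congruent to its own reversal and, generically, $k(s_1)\neq k(s_2)$. Pinning down such a $Q$ — excluding the symmetric, tangential, and $k(s_1)=k(s_2)$ cases — is the first technical step.

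Now the cut‑and‑paste. Write $\gamma=\gamma_1\cdot\mu\cdot\gamma_3$ with $\mu=\gamma|_{[s_1,s_2]}$ the loop based at $Q$; let $R'$ be the reflection about the line through $Q$ in the direction $\gamma'(s_2)-\gamma'(s_1)$ (the bisector of $\gamma'(s_2)$ and $-\gamma'(s_1)$), and put $\mu':=\overline{R'(\mu)}$, i.e.\ $\mu$ reflected by $R'$ and then orientation‑reversed. This choice of $R'$ is exactly what forces $\mu'$ to be a loop from $Q$ to $Q$ with initial unit tangent $\gamma'(s_1)$ and terminal unit tangent $\gamma'(s_2)$, so that $\tilde\gamma:=\gamma_1\cdot\mu'\cdot\gamma_3$ is $C^1$, lies in $W^{2,p}_\mathrm{arc}(0,L;\R^2)$, and has the prescribed endpoints $P_0,P_1$; thus $\tilde\gamma\in\Ap$. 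Because $\F$ is invariant under isometries and under orientation reversal, $\F(\mu')=\F(\mu)$, hence $\F(\tilde\gamma)=\F(\gamma)$. However, reversing the orientation flips the sign of the curvature of $\mu'$, so the curvature of $\tilde\gamma$ jumps at $Q$ by $|k(s_1)-k(s_2)|>0$; thus $\tilde\gamma\notin C^2$, and $\tilde\gamma\neq\gamma$.

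Finally I would invoke \eqref{H3}: a curve in $\Ap$ carrying a genuine jump of curvature cannot be a local minimizer of $\F$, since near the jump one redistributes the turning over a short subarc and, by strict convexity of $f$, strictly lowers $\int f(|k|)\,ds$ while keeping the endpoints, the length and the curve off that subarc fixed. Hence $\tilde\gamma$ is not a local minimizer. To descend to $\gamma$, one connects $\gamma$ to $\tilde\gamma$ inside $\Ap$ through the family $\gamma_1\cdot\mu_t\cdot\gamma_3$, where $\mu_t$ ranges over loops from $Q$ to $Q$ of length $s_2-s_1$ with clamped tangents $\gamma'(s_1),\gamma'(s_2)$ — $\mu$ and $\mu'$ being rigid‑motion copies of one another at a common energy level — and runs a continuation argument along this subfamily to transport the failure of local minimality from $\tilde\gamma$ back to $\gamma$. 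This transfer is the main obstacle: $\tilde\gamma$ is not $W^{2,p}$‑close to $\gamma$, so upgrading ``there is an equal‑energy competitor that is not a local minimizer'' to ``$\gamma$ is not a local minimizer'' is the delicate point, and it is here that the rigid‑motion relation between $\mu$ and $\mu'$, together with \eqref{H3}, has to be used in an essential way.
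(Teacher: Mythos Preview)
Your approach has two fatal gaps.

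\textbf{First, the asymmetric crossing does not exist.} A $\tfrac12$-fold well-periodic curve with $P_0\neq P_1$ has the reflection symmetry of Lemma~\ref{lem:curve-sym}: $x(L-s)+x(s)=l$, $y(L-s)=y(s)$, and $k(L-s)=k(s)$. Hence if $\gamma(s_1)=\gamma(s_2)$, then also $\gamma(L-s_1)=\gamma(L-s_2)$. When there is a single crossing --- which is exactly the case for the $(p,r,1)$-loops that are the target application --- this forces $s_1+s_2=L$, hence $k(s_1)=k(L-s_1)=k(s_2)$. A direct computation then shows that your reversed-reflected loop $\mu'$ has curvature $k_{\mu'}(s)=k(s_2-s)=k(L-s_1-s)=k(s_1+s)=k_\mu(s)$, so $\mu'=\mu$ and $\tilde\gamma=\gamma$. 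Your construction is the identity in the only case that matters; the claim that one can find a crossing with $k(s_1)\neq k(s_2)$ is precisely what the symmetry forbids.

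\textbf{Second, the continuation argument is not valid.} Even granting an equal-energy $\tilde\gamma\neq\gamma$ that fails to be a local minimizer, this does not imply that $\gamma$ fails as well: non-local-minimality is not transported along an equal-energy path in $\Ap$. The mechanism behind Lemma~\ref{thm:Magic} needs the competitors to \emph{converge} to $\gamma$; a far-away equal-energy competitor is only useful for global minimality (as in Theorem~\ref{thm:planar_global}), not for stability. Your proposed family $\mu_t$ cannot consist of rigid motions of $\mu$ (those destroy the clamped tangents at $Q$ except at the endpoints of the family), so you would need a genuine deformation with constant energy, and there is no reason one exists.

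\textbf{What the paper does instead.} The paper bypasses both issues by producing a sequence $\gamma_j\to\gamma$ in $W^{2,p}$ with $\F(\gamma_j)<\F(\gamma)$ strictly. One oddly extends $\gamma$ across $s=0$, restricts to a shifted window $[-c_j,L-\tfrac1j]$ chosen so that the endpoints are again at distance $l$, and then \emph{rescales the loop} $\gamma|_{[a,b]}$ by a factor $\lambda_j>1$ to restore the length $L$. Hypothesis~\eqref{H3} enters only through the scaling inequality $\lambda f(\lambda^{-1}t)<f(t)$ for $\lambda>1$ and $t>0$ (using $f(0)=0$), which makes the rescaled loop strictly cheaper. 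No regularity assumption on minimizers is needed, and no continuation is involved.
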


Notice that the assumption automatically rules out the case that $P_0=P_1$.

\subsection{Applications to $p$-elastica}\label{subsect:p-elastica}

Now we discuss some concrete applications to $p$-elasticae under some well-chosen boundary conditions.

Our previous work ensures that for all $p\in(1,\infty)$ the $p$-bending energy $\B_p$ satisfies hypotheses \eqref{H1*}, \eqref{H1**}, \eqref{H2}, \eqref{H3}; see \cite{MYarXiv2203} for $C^2$-regularity in \eqref{H1*}, \eqref{H1**}, \eqref{H2}, and see \cite{MYarXiv2209} for the natural boundary condition in \eqref{H2}.
In addition, all wavelike $p$-elasticae obtained in \cite{MYarXiv2203} belong to the class of well-periodic curves.

We first address the case of closed curves, which can be regarded as a special case of the clamped boundary condition by taking $P_0=P_1$ and $V_0=V_1$.
It is shown in \cite{MYarXiv2203} that any closed planar $p$-elastica is either a circle or a figure-eight $p$-elastica, possibly multiply covered.
It is easy to deduce from H\"{o}lder's inequality and Fenchel's theorem that a global minimizer is a once covered circle.
Here we apply Theorem \ref{thm:planar_local} to extend the known classification of stability for $p=2$ \cite{LS_85, Sac12, AKS} to a general power $p\in(1,\infty)$.

\begin{theorem}[Stability of closed $p$-elasticae]\label{thm:classify-closed-pela}
    Let $p\in(1,\infty)$ and $\gamma\in\Ac$ with $P_0=P_1$ and $V_0=V_1$.
    Then $\gamma$ is a local minimizer of $\B_p$ in $\Ac$ if and only if $\gamma$ is either a possibly multiply covered circle or a once covered figure-eight $p$-elastica.
\end{theorem}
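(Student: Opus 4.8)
The plan is to prove the two implications separately, using the classification of closed planar $p$-elasticae from \cite{MYarXiv2203} (every such curve is a possibly multiply covered circle or a possibly multiply covered figure-eight $p$-elastica) together with Theorem~\ref{thm:planar_local} for necessity and the direct method for sufficiency. The organizing observation is that, since $W^{2,p}(0,L;\R^2)\hookrightarrow C^1$ for $p>1$, the rotation index of a curve in $\Ac$ (well defined because $\gamma'(0)=V_0=V_1=\gamma'(L)$) is an integer-valued continuous, hence locally constant, functional; on a $W^{2,p}$-neighbourhood of a fixed closed curve it is constantly equal to its value there.

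For necessity, suppose $\gamma$ is a local minimizer of $\B_p$ in $\Ac$. Since $\B_p$ satisfies \eqref{H1**} we have $\gamma\in C^2$, and, being a critical point, $\gamma$ solves the Euler--Lagrange equation; as any closed curve near $\gamma$ can be brought into $\Ac$ by a rigid motion without changing $\B_p$, it follows that $\gamma$ is a closed planar $p$-elastica, hence by \cite{MYarXiv2203} a possibly multiply covered circle or figure-eight. It remains to rule out an $n$-fold figure-eight with $n\ge2$. Such a $\gamma$ is wavelike, hence well-periodic (Definition~\ref{def:well-periodic}) by \cite{MYarXiv2203}, with antiperiod $T=L/(2n)$, so the zero set of its curvature in $[0,L]$ has at least $2n-1\ge3$ points; choosing three consecutive ones $0\le s_1<s_2<s_3\le L$ gives $k(s_1)=k(s_2)=k(s_3)=0$ with $s_3-s_1=2T=L/n<L$, so Theorem~\ref{thm:planar_local} (applicable since $\B_p$ satisfies \eqref{H1**}) shows $\gamma$ is not a local minimizer --- a contradiction. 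Thus $\gamma$ is a possibly multiply covered circle or a once covered figure-eight.

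For sufficiency, I would show that each such curve is a \emph{global} minimizer of $\B_p$ within its rotation class $\mathcal{C}_m$ of closed length-$L$ curves with rotation index $m$; by local constancy of the rotation index a neighbourhood of $\gamma$ in $\Ac$ lies in $\mathcal{C}_m$, so global minimality in $\mathcal{C}_m$ yields local minimality in $\Ac$. If $\gamma$ is an $n$-fold circle, then $m=\pm n$ and, for any $\tilde\gamma\in\mathcal{C}_m$, the identity $\int_0^L k\,ds=2\pi m$ and H\"older's inequality give $\B_p[\tilde\gamma]=\int_0^L|k|^p\,ds\ge L^{1-p}\big(\int_0^L|k|\,ds\big)^p\ge L^{1-p}(2\pi|m|)^p=\B_p[\gamma]$. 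If $\gamma$ is the once covered figure-eight ($m=0$), I would run the direct method on $\mathcal{C}_0$ --- it is non-empty (figure-eight $p$-elasticae of every length exist by scaling), a minimizing sequence is bounded in $W^{2,p}$ after translation (fixed length and bounded energy), $\B_p$ is weakly $W^{2,p}$-lower semicontinuous by convexity of $\xi\mapsto|\xi|^p$, and arclength, closedness and rotation index all survive the strong $C^1$-limit --- to obtain a minimizer $\gamma_*\in\mathcal{C}_0$, which as in the necessity step is a closed $p$-elastica, hence by \cite{MYarXiv2203} and $m=0$ an $n$-fold figure-eight; the scaling relation $\B_p[n\text{-fold figure-eight of length }L]=n^p\B_p[\text{once covered figure-eight of length }L]$ forces $n=1$, and since the figure-eight $p$-elastica is unique up to similarity \cite{MYarXiv2203} our $\gamma$ is congruent to $\gamma_*$, hence also minimizes $\B_p$ on $\mathcal{C}_0$.

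The main obstacle is the figure-eight case of sufficiency, and the key point there is conceptual rather than computational: local minimality in $\Ac$ is governed by the discrete, locally constant rotation index, so it suffices to prove that the figure-eight $p$-elastica is the global minimizer of $\B_p$ among all closed curves of fixed length and rotation index zero --- a robust statement accessible by the direct method plus the classification of \cite{MYarXiv2203}, thereby circumventing the second-variation analysis that is problematic for $p\ne2$. The necessity part is routine once Theorem~\ref{thm:planar_local} is available.
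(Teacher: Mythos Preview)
Your proposal is correct and follows essentially the same route as the paper: necessity via Theorem~\ref{thm:planar_local} to rule out $n$-fold figure-eights with $n\ge2$, and sufficiency by showing global minimality within each rotation-number class (local constancy of the rotation index, direct method, classification from \cite{MYarXiv2203}, and energy comparison among $n$-fold figure-eights). The only cosmetic difference is that for multiply covered circles you give an explicit H\"older-inequality argument, whereas the paper treats that case uniformly by the direct method plus classification; both are valid.
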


Since the $C^1$-regular homotopy classes of closed planar curves are exactly classified by the rotation number, and since the above local minimizers have different rotation numbers, we reach a uniqueness theorem \`{a} la Langer--Singer in $\R^2$ \cite{LS_85}.

\begin{corollary}\label{cor:LangerSinger-p}
    For each $C^1$-regular homotopy class of immersed circles in $\R^2$, stable closed planar $p$-elasticae are unique up to similarity and reparametrization.
\end{corollary}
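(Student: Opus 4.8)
The plan is to read off Corollary~\ref{cor:LangerSinger-p} from Theorem~\ref{thm:classify-closed-pela} combined with the Whitney--Graustein theorem. Recall that the latter identifies the set of $C^1$-regular homotopy classes of immersed circles in $\R^2$ with $\Z$ via the rotation number (turning number). So I would first compute the rotation numbers of the stable closed $p$-elasticae listed in Theorem~\ref{thm:classify-closed-pela}: an $n$-times covered circle has rotation number $\pm n$ according to its orientation, while a (once covered) figure-eight $p$-elastica has rotation number $0$, its two symmetric loops contributing opposite turning. Thus, among stable closed $p$-elasticae, each nonzero class $\pm n$ ($n\in\N$) contains exactly the $n$-fold covered circle, and the class $0$ contains exactly the once covered figure-eight $p$-elastica.

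Then I would invoke the available uniqueness statements within each shape. All circles in $\R^2$ are related by a similarity, and for fixed multiplicity $n$ the $n$-fold covering is unique up to similarity and reparameterization (reversing orientation being a reparameterization); this settles the classes $\pm n$. For the class $0$ I would simply cite \cite{MYarXiv2203}, where the figure-eight $p$-elastica is shown to be unique up to similarity and reparameterization. Together with Theorem~\ref{thm:classify-closed-pela}, these facts show that each $C^1$-regular homotopy class contains precisely one stable closed planar $p$-elastica modulo similarity and reparameterization, which is the claim; existence within each class is part of the same classification.

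Since the argument is purely formal, there is no genuine analytic obstacle here. The one point requiring care is the bookkeeping: one must use the \emph{sharp} form of Theorem~\ref{thm:classify-closed-pela} --- namely that among figure-eights only the once covered one is stable, whereas all coverings of the circle are --- to be sure that the stable list is genuinely separated by rotation number, since a multiply covered figure-eight would also have rotation number $0$ but is excluded. With this in hand the corollary follows exactly as the $\R^2$ case of the Langer--Singer uniqueness theorem \cite{LS_85}.
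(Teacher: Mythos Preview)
Your proposal is correct and follows essentially the same approach as the paper: both argue by invoking Theorem~\ref{thm:classify-closed-pela} and separating the stable closed $p$-elasticae according to rotation number, using that the $n$-fold circle has rotation number $\pm n$ while the once covered figure-eight has rotation number~$0$. The paper's version is more terse (it refers back to the proof of Proposition~\ref{prop:closed-stable} for the case $m\geq1$), but your more explicit bookkeeping via Whitney--Graustein and the shape-uniqueness remarks is a faithful expansion of the same argument.
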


We then turn to the pinned boundary condition.
We call a critical point of $\B_p$ in $\Ap$ \emph{pinned $p$-elastica}.
In \cite{MYarXiv2209} the authors have classified all pinned $p$-elasticae and proved uniqueness of global minimizers.
In general, there are infinitely many pinned $p$-elasticae, which are either of wavelike or flat-core type.

We first completely classify the stability of wavelike pinned $p$-elasticae.
It is shown in \cite{MYarXiv2209} that any wavelike pinned $p$-elasticae is classified as an $\frac{m}{2}$-fold well-periodic curve as in Figure \ref{fig:Maple2}, to which Theorems \ref{thm:pinned2} and \ref{thm:pinned1*} are applicable.
It turns out that all but the global minimizer is unstable.

\begin{theorem}[Stability of wavelike pinned $p$-elasticae]
\label{thm:wavelike_unique}
    Let $p\in(1,\infty)$.
    Suppose that $\gamma\in\Ap$ is a wavelike $p$-elastica.
    Then $\gamma$ is a local minimizer of $\B_p$ in $\Ap$ if and only if $\gamma$ is a global minimizer (convex arc).
\end{theorem}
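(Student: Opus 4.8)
The plan is as follows. The ``if'' implication is immediate, since a global minimizer is a fortiori a local minimizer, so the whole content lies in the ``only if'' implication. Thus suppose $\gamma\in\Ap$ is a wavelike $p$-elastica which is a local minimizer of $\B_p$ in $\Ap$. The first step is to invoke the classification of wavelike pinned $p$-elasticae from \cite{MYarXiv2209}: such a $\gamma$ is necessarily an $\frac{m}{2}$-fold well-periodic curve for some $m\in\N$. Writing $T=L/m$ for its antiperiod, the curvature of $\gamma$ vanishes exactly at $jT$ for $j=0,1,\dots,m$ and keeps a strict sign on each $(jT,(j+1)T)$. I will also use freely that $\B_p$ satisfies \eqref{H1**}, \eqref{H2}, \eqref{H3} and that $f(x)=x^p$ is strictly convex on $[0,\infty)$ for $p\in(1,\infty)$. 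The goal is to show $m=1$ and that $\gamma$ is injective on $(0,L)$, because by the same classification this forces $\gamma$ to be the (unique) convex-arc global minimizer.

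Next I would rule out $m\geq2$ by a short case split. \emph{Case $m\geq3$:} exactly as in the argument behind Remark~\ref{rem:BCfree} (applied with left parameter $0$, which is allowed because clamping the originally pinned endpoint only shrinks the competitor class), for all small $\delta>0$ the restriction $\gamma|_{[0,L-\delta]}$ is a local minimizer of $\B_p$ under the clamped boundary condition with the data inherited from $\gamma$: any clamped competitor on $[0,L-\delta]$ glued with the unchanged piece $\gamma|_{[L-\delta,L]}$ is an admissible curve in $\Ap$ (the first-order matching at $s=L-\delta$ makes it $W^{2,p}$), and $W^{2,p}$-closeness to $\gamma$ is preserved. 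This restricted curve is again well-periodic with antiperiod $T=L/m$, and since $m\geq3$ the three points $s_1=0<s_2=T<s_3=2T$ all lie in $[0,L-\delta]$, satisfy $k(s_i)=0$, and obey $s_3-s_1=2T<L-\delta$ for $\delta$ small. Theorem~\ref{thm:planar_local} (which applies since $\B_p$ satisfies \eqref{H1**}) then says $\gamma|_{[0,L-\delta]}$ is not a clamped local minimizer, a contradiction. \emph{Case $m=2$:} here $\gamma$ is a $1$-fold well-periodic curve in $\Ap$, so Theorem~\ref{thm:pinned2} (which applies since $\B_p$ satisfies \eqref{H2}) directly yields that $\gamma$ is not a local minimizer, again a contradiction. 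Hence $m=1$.

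It then remains to analyse the case $m=1$, i.e.\ $\gamma$ is $\frac12$-fold, so its curvature has a strict sign on $(0,L)$. If $\gamma$ were not injective on $(0,L)$, then $\gamma$ would be of loop (teardrop) type, and the explicit description of these curves in \cite{MYarXiv2209} (cf.\ $\gamma^1_{\mathrm{loop}}$ in Figure~\ref{fig:Maple2}) gives $P_0\neq P_1$ together with $\gamma'(0)\cdot(P_1-P_0)>0$; Theorem~\ref{thm:pinned1*} (which applies since $\B_p$ satisfies \eqref{H3}) would then force $\gamma$ not to be a local minimizer, a contradiction. Therefore $\gamma$ is injective on $(0,L)$, and together with the strict sign of its curvature the classification in \cite{MYarXiv2209} identifies $\gamma$ with the convex-arc $p$-elastica, which is the unique global minimizer of $\B_p$ in $\Ap$. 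This closes the argument.

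The part I expect to be routine is the bookkeeping with the general principles; the genuinely delicate point is the input from \cite{MYarXiv2209} used in the case $m=1$, namely that every non-injective $\frac12$-fold pinned $p$-elastica satisfies $\gamma'(0)\cdot(P_1-P_0)>0$ (so that Theorem~\ref{thm:pinned1*} is applicable). This I would verify by reading it off the explicit $p$-elliptic parametrisation of \cite{MYarXiv2209}; alternatively it can be extracted from the reflection symmetry of well-periodic curves combined with the precise self-intersection structure of teardrop $p$-elasticae. A minor auxiliary point is the slight extension of Remark~\ref{rem:BCfree} allowing one endpoint of the subinterval to coincide with the originally pinned boundary point, which is immediate from the gluing construction described above.
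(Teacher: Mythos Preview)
Your proof is correct and follows essentially the same strategy as the paper: invoke the classification of wavelike pinned $p$-elasticae from \cite{MYarXiv2209} as $\frac{m}{2}$-fold well-periodic curves, then eliminate $m\geq3$ via Theorem~\ref{thm:planar_local}, $m=2$ via Theorem~\ref{thm:pinned2}, and the non-injective $m=1$ case via Theorem~\ref{thm:pinned1*}, leaving only the convex arc.

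The one place where your route differs from the paper is the case $m\geq3$. You pass through the restriction $\gamma|_{[0,L-\delta]}$ and the gluing argument (your slight extension of Remark~\ref{rem:BCfree}) in order to land in a clamped admissible class where Theorem~\ref{thm:planar_local} literally applies. The paper instead uses Remark~\ref{rem:M0503}: since $\Ac\subset\Ap$ and $\B_p$ satisfies \eqref{H1**} for all first-order boundary data, the proof of Theorem~\ref{thm:planar_local} goes through verbatim with $\Ac$ replaced by $\Ap$, so no restriction or gluing is needed. Your detour is valid but unnecessary; the paper's route is a one-line observation. Everything else (including your treatment of the delicate point that a non-injective $\tfrac12$-fold pinned $p$-elastica has $P_0\neq P_1$ and $\gamma'(0)\cdot(P_1-P_0)>0$, which the paper also extracts from \cite{MYarXiv2209}) matches the paper.
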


The above wavelike result already yields strong rigidity.
Indeed, it is also shown in \cite{MYarXiv2209} that if $\frac{|P_0-P_1|}{L}< \frac{1}{p-1}$, then any pinned $p$-elastica is wavelike.
Hence we reach the following uniqueness of stable pinned $p$-elasticae.

\begin{corollary}\label{cor:pinned_unique}
    Suppose either that $p\in(1,2]$, or that $p\in(2,\infty)$ and $\frac{|P_0-P_1|}{L}< \frac{1}{p-1}$.
    Then stable pinned planar $p$-elasticae in $\Ap$ are unique up to isometries.
\end{corollary}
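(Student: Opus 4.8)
The plan is to combine the structural classification of pinned $p$-elasticae from \cite{MYarXiv2209} with the two general instability principles, Theorems \ref{thm:pinned2} and \ref{thm:pinned1*}, together with the wavelike stability dichotomy of Theorem \ref{thm:wavelike_unique}. First I would recall that every pinned $p$-elastica is either of wavelike or flat-core type, and that under the standing hypothesis --- either $p\in(1,2]$, or $p\in(2,\infty)$ with $\frac{|P_0-P_1|}{L}<\frac{1}{p-1}$ --- the flat-core case is excluded: indeed \cite{MYarXiv2209} shows that $\frac{|P_0-P_1|}{L}<\frac{1}{p-1}$ forces wavelike type, and for $p\in(1,2]$ one has $\frac{1}{p-1}\geq 1>\frac{|P_0-P_1|}{L}$ automatically since $|P_0-P_1|<L$. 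Hence in all cases covered by the corollary, every pinned $p$-elastica is wavelike.

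Once we are reduced to wavelike pinned $p$-elasticae, Theorem \ref{thm:wavelike_unique} says that a local minimizer must be the global minimizer, which by \cite{MYarXiv2209} is the embedded convex arc $\gamma^1_\mathrm{arc}$. Conversely, the global minimizer is trivially a local minimizer. Therefore the stable pinned $p$-elasticae are exactly the global minimizers, and these are unique up to isometries (the boundary data $P_0,P_1$ determine the convex arc up to a reflection across the line through $P_0$ and $P_1$, and reparameterization). This yields the claimed uniqueness. I would also note in passing that the hypotheses \eqref{H1*}, \eqref{H1**}, \eqref{H2}, \eqref{H3} needed to invoke Theorems \ref{thm:pinned2}, \ref{thm:pinned1*}, \ref{thm:wavelike_unique} are all verified for $\B_p$, $p\in(1,\infty)$, as recorded at the start of Section \ref{subsect:p-elastica}.

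The main obstacle is not in the present corollary itself --- given Theorem \ref{thm:wavelike_unique} and the structural results of \cite{MYarXiv2209} it is essentially a bookkeeping argument --- but rather lies upstream, in establishing Theorem \ref{thm:wavelike_unique}. That theorem requires disposing of all $\frac{m}{2}$-fold wavelike pinned $p$-elasticae with $m\geq 2$ (via the higher-mode instability Theorem \ref{thm:pinned2}, after restricting to a one-fold subarc between consecutive curvature zeros and using Remark \ref{rem:BCfree}) and also the non-injective $\frac{1}{2}$-fold loop $\gamma^1_\mathrm{loop}$ (via Theorem \ref{thm:pinned1*}, after checking the sign condition $\gamma'(0)\cdot(P_1-P_0)>0$ from the explicit geometry of the loop). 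The delicate points there are matching the explicit $p$-elliptic parameterization of \cite{MYarXiv2203,MYarXiv2209} with the abstract hypotheses of the general theorems, and confirming that the convex arc genuinely is a local minimizer rather than merely a critical point --- but for the present corollary all of this may be taken as given.

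One should also be slightly careful about the precise meaning of ``unique up to isometries'': the convex arc determined by $P_0$, $P_1$, and $L$ comes in a mirror pair, but these two are related by the reflection fixing the segment $P_0P_1$, hence by an isometry of $\R^2$; likewise orientation-reversing reparameterization swaps the roles of $P_0$ and $P_1$ but again lands in the same isometry class. So the statement is consistent, and no further case analysis is needed beyond the reduction to the wavelike setting and the appeal to Theorem \ref{thm:wavelike_unique}.
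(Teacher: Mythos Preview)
Your proof is correct and follows essentially the same route as the paper: reduce to the wavelike case via the classification in \cite{MYarXiv2209} (noting that for $p\in(1,2]$ the inequality $\frac{|P_0-P_1|}{L}<1\le\frac{1}{p-1}$ is automatic), and then invoke Theorem~\ref{thm:wavelike_unique}. The additional commentary on the upstream ingredients and on the meaning of ``up to isometries'' is accurate but not needed for the corollary itself.
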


For $p\in(1,2]$, our result covers the full range of pinned boundary data.
This not only extends Maddocks' linear stability analysis as in Figure \ref{fig:Maple1} from $p=2$ to $p\in(1,2]$, but also directly justifies it on the nonlinear level (even for $p=2$).
In particular, our result could possibly be the first explicit statement on the uniqueness of stable pinned elasticae.
At any rate, the result is completely new for $p\neq2$.

We further make progress by addressing flat-core pinned $p$-elasticae, the only remaining case.
Flat-core $p$-elasticae appear only in the degenerate case $p>2$ and have peculiar non-periodicity, cf.\ Figure \ref{fig:flat-cores}, thus representing the substantial difference between the structures of $p$-elasticae and of the classical elasticae.
In particular, our previous rigidity results are based on periodicity and not directly applicable to flat-core $p$-elasticae in general.
However, the same machinery enables us to develop rather ad-hoc arguments to prove the following new instability:

\begin{theorem}[Unstable flat-core pinned $p$-elasticae]\label{thm:unstable-flatcore}
    A flat-core pinned planar $p$-elastica is unstable either if it contains two adjacent loops in opposite directions, or if a loop touches an endpoint.
\end{theorem}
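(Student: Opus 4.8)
The plan is to exhibit, in each of the two listed configurations, an explicit `cut-and-paste' competitor that strictly decreases $\B_p$ while preserving the pinned boundary data and the length, using the same mechanism as in the proofs of Theorems~\ref{thm:planar_global}--\ref{thm:pinned1*}. Recall that a flat-core pinned $p$-elastica consists of straight segments (the flat cores, where $k\equiv0$) interspersed with loop-shaped pieces on which $|k|$ follows a fixed profile; each loop is a rescaled copy of one `unit loop', and between consecutive loops (and between a loop and an endpoint) there is a straight segment of nonnegative length whose total length is a free parameter. The key structural fact we will use from \cite{MYarXiv2209} is that the tangent vector is constant along each flat core, so two loops joined by a flat core have parallel `entry' and `exit' tangents; this gives us the freedom to translate loops along the segments and to reflect or reorder loops without affecting the rest of the curve.

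For the first case --- two adjacent loops in opposite directions --- I would argue as follows. Let the two loops be $\Gamma_1,\Gamma_2$, traversed so that $\Gamma_1$ turns (say) counterclockwise and $\Gamma_2$ clockwise, joined by a flat core along which the tangent is some unit vector $V$. Since the two loops have opposite signs of $k$ but the same $|k|$-profile, the curve $\gamma$ restricted to $\Gamma_1\cup(\text{core})\cup\Gamma_2$ has tangent angle returning to its starting value; more importantly, the displacement vector across this portion lies along $V$ plus a contribution from the two loops that cancels in the direction transverse to $V$. The idea is to first bring the two loops together (shrink the connecting flat core to zero length, pushing the freed length into a neighbouring segment) so that $\gamma$ locally looks like a figure-eight-type junction, and then apply a reflection/rotation surgery at the junction point: replace the pair $\Gamma_1,\Gamma_2$ by a reconfiguration in which the curvature is rearranged to have a single sign on a correspondingly rescaled arc. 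Concretely, I expect that the configuration with two opposite loops can be compared --- at equal length and equal endpoints --- with a configuration having (for instance) one larger same-signed loop, and that strict convexity of $f$ (hypothesis \eqref{H3}, available for $\B_p$) makes the `unequal curvature' comparison strict exactly as in Theorem~\ref{thm:planar_global}; the point $s_1$ on one loop and $s_2$ on the other have $\gamma'(s_1)=\gamma'(s_2)$ (both equal $\pm V$ at the matching heights of the loop profile) but $-k(s_1)\neq k(s_2)$ precisely because the loops have opposite orientation and nonzero curvature there, so the mechanism of Theorem~\ref{thm:planar_global} applies verbatim once we have checked that the relevant sub-curve can be viewed as clamped and that the perturbation stays in $\Ap$. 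By Remark~\ref{rem:BCfree}, minimality of $\gamma$ in $\Ap$ forces minimality of any interior sub-arc under clamped conditions, so it suffices to produce the contradiction on such a sub-arc.

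For the second case --- a loop touching an endpoint --- the geometric picture is that one of the extreme flat cores has zero length, so a loop $\Gamma$ sits with one of its ends exactly at $P_0$ (or $P_1$) and with the pinned endpoint tangent free. Here I would exploit the non-injectivity that a loop necessarily creates near the endpoint: there is a point $s_*>0$ with $\gamma(s_*)=P_0$ and $\gamma'(s_*)$ pointing back along the loop, so the sub-arc $\gamma|_{[0,s_*]}$ is a closed loop based at $P_0$. One can then apply a surgery analogous to the one behind Theorem~\ref{thm:pinned1*}: cut this closed loop off, translate it along the adjacent segment (or reflect it through $P_0$), and rejoin, using strict convexity to win. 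Alternatively --- and this is probably cleaner --- one shows directly that the endpoint-touching configuration violates the necessary condition $\gamma'(0)\cdot(P_1-P_0)>0$ type sign constraint, or one applies Theorem~\ref{thm:pinned1*} itself to a rescaled sub-problem in which the loop-at-endpoint becomes a $\tfrac12$-fold well-periodic curve with the bad loop structure; the hypothesis `$\gamma'(0)\cdot(P_1-P_0)>0$' there is exactly the statement that the endpoint tangent points into the half-space containing the other endpoint, which fails (or can be arranged to fail after relabeling) when a loop is glued at the endpoint.

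The main obstacle I anticipate is bookkeeping rather than conceptual: flat-core $p$-elasticae are non-periodic and come with a whole interval's worth of free segment-length parameters, so one must be careful that the cut-and-paste competitor (i) lands in $W^{2,p}_{\mathrm{arc}}$ --- meaning the junctions where surgery is performed must be $C^1$, which is why reflecting or translating a loop along a flat core (across which the tangent is constant) is the right move, since it introduces no corner --- and (ii) genuinely has the same length and endpoints, which requires tracking the displacement vectors of the loop units and choosing how much segment length to redistribute. The strict inequality in the energy comparison should come for free from \eqref{H3} via the same argument as in Theorem~\ref{thm:planar_global} (a strictly convex $f$ composed with a non-constant rearrangement of $|k|$ strictly decreases the integral), so the real work is verifying that in each of the two listed loop configurations there actually exist two matching points with equal tangent and non-antipodal curvature, or a closed sub-loop amenable to the Theorem~\ref{thm:pinned1*} surgery. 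I would organize the proof as two lemmas, one per configuration, each reducing to an already-proven theorem via Remark~\ref{rem:BCfree} and an explicit but elementary geometric construction.
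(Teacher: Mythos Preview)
There is a genuine gap in your approach to the first case (adjacent opposite loops). Your plan is to find points $s_1,s_2$ on the two loops with $\gamma'(s_1)=\gamma'(s_2)$ and $-k(s_1)\neq k(s_2)$, then invoke Theorem~\ref{thm:planar_global}. But by the reflection symmetry \eqref{eq:F4} of the unit loop this condition can \emph{never} be met: the tangent angle runs monotonically from $0$ to $2\pi$ along $\gamma^+_{\rm loop}$ and from $0$ to $-2\pi$ along $\gamma^-_{\rm loop}$, so matching tangent directions forces $s_2$ to be the mirror image of $s_1$, at which point $k(s_2)=-k(s_1)$ exactly. (Points on segments or loop endpoints have $k=0$ and do not help either.) There is also a second problem: Theorem~\ref{thm:planar_global} only excludes \emph{global} minimizers, whereas here you must exclude \emph{local} ones. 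Remark~\ref{rem:BCfree} transports local minimality of $\gamma$ to local minimality of interior sub-arcs, so what you would need on the sub-arc is the mechanism of Theorem~\ref{thm:planar_local} --- three interior inflection points of a well-periodic curve --- and flat-core loops have none. Your second case is likewise not reducible to Theorem~\ref{thm:pinned1*}, since a flat-core $p$-elastica with a loop at an endpoint is not a $\tfrac12$-fold well-periodic curve.

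The paper's argument is different in spirit and uses neither strict convexity \eqref{H3} nor any strict energy decrease. Both cases produce \emph{equal-energy} competitors that violate a regularity hypothesis. For a loop touching $P_0$, one simply shifts the parameter: $\gamma_j:=P_0\oplus\gamma|_{[1/j,L]}\oplus\gamma|_{[0,1/j]}$, which is $C^1$ by \eqref{eq:ConditionFromF12}, has the same energy, and converges to $\gamma$, but its curvature at the left endpoint is $k(1/j)\neq0$ by \eqref{eq:F3}, contradicting the natural boundary condition in \eqref{H2}. For two adjacent opposite loops one performs a small cyclic reshuffle of the pieces: writing $\Gamma=\gamma^+_{\rm loop}\oplus\gamma^-_{\rm loop}|_{[0,a]}$, set
\[
\Gamma_j:=\gamma^-_{\rm loop}|_{[0,1/j]}\oplus\gamma^+_{\rm loop}|_{[1-1/j,1]}\oplus\gamma^+_{\rm loop}|_{[0,1-1/j]}\oplus\gamma^-_{\rm loop}|_{[1/j,a]},
\]
which is $C^1$ by \eqref{eq:F2} and \eqref{eq:F4}, has equal energy, converges to $\Gamma$, but has a curvature jump at $s=1/j$ by \eqref{eq:F3}; this contradicts \eqref{H1**} via Lemma~\ref{thm:Magic}. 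The key mechanism throughout is the same indirect trick used in Sections~\ref{sect:clamped}--\ref{sect:pinned}: build an equal-energy $W^{2,p}$ competitor that cannot itself be a local minimizer because it fails $C^2$-regularity or the natural boundary condition.
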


%%%%%%%%%%%%%%%%%%%%%%%%%%%%%% 
\begin{center}
    \begin{figure}[htbp]
      \includegraphics[scale=0.2]{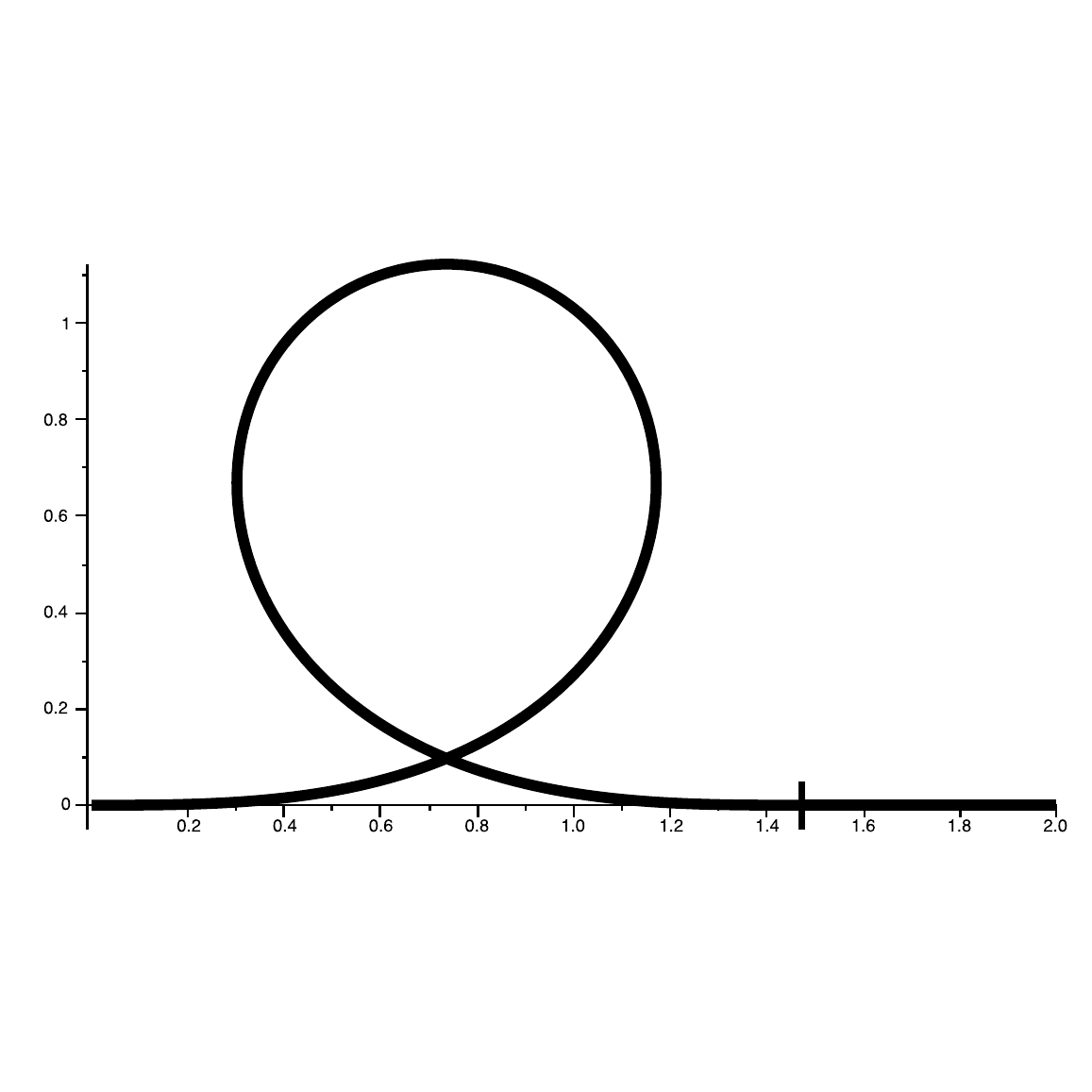}
  \hspace{5pt}
      \includegraphics[scale=0.2]{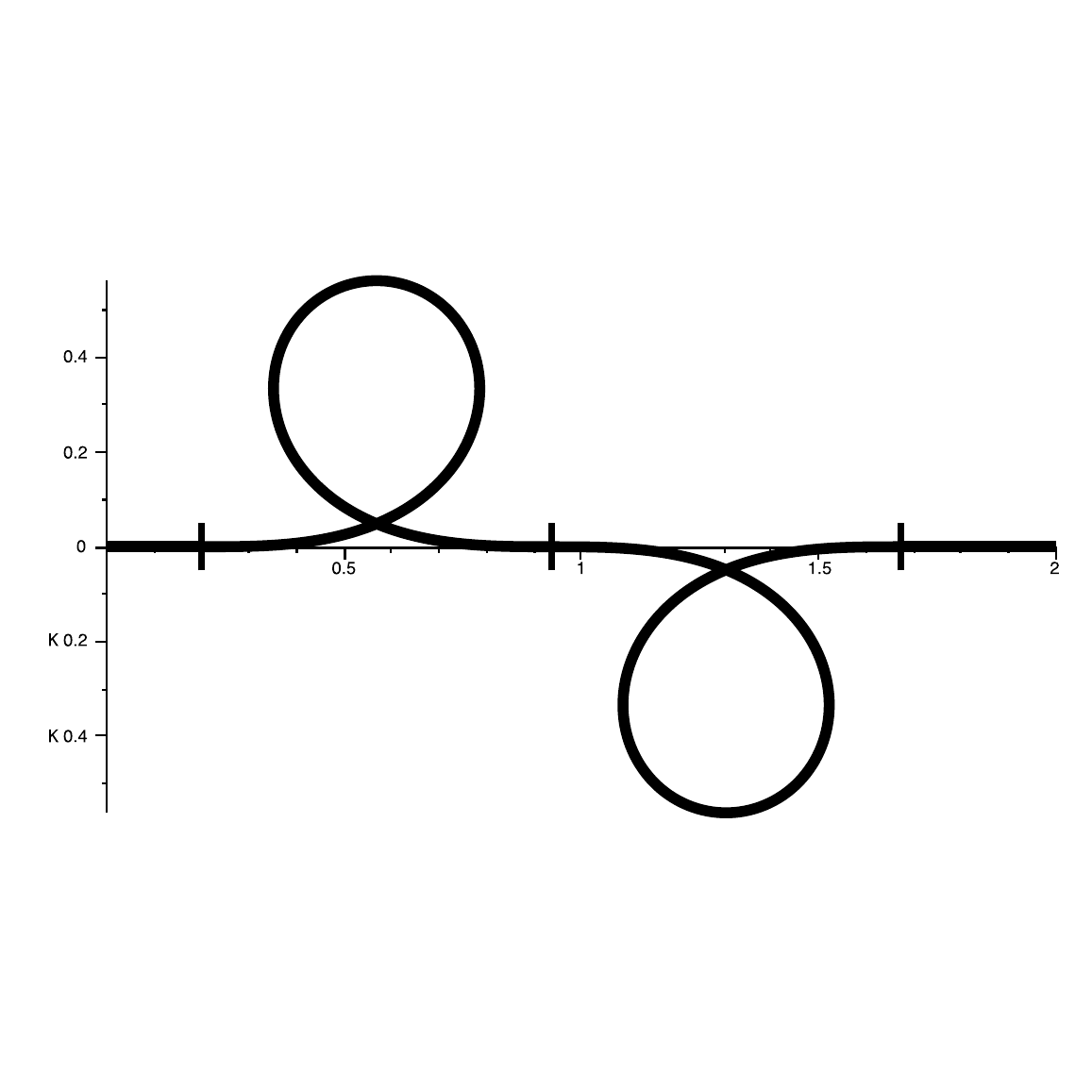}
  \hspace{5pt}
      \includegraphics[scale=0.2]{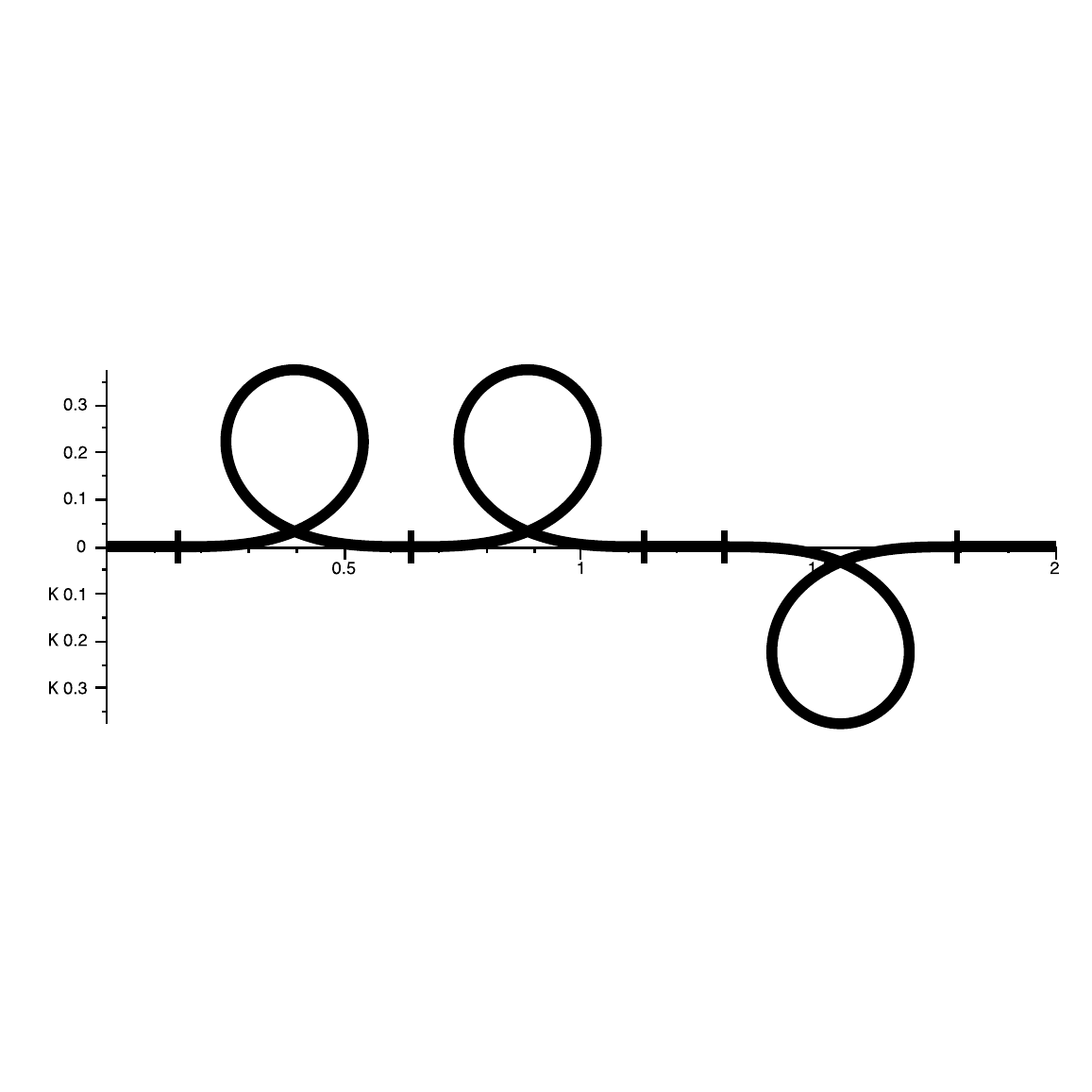}
   \vspace{-20pt}
  \caption{Flat-core $p$-elasticae.
  The left two curves are unstable (Theorem~\ref{thm:unstable-flatcore}).
  The right curve is classified as a quasi-alternating flat-core $p$-elastica (Definition~\ref{def:alternating}).}
  \label{fig:flat-cores}
  \end{figure}
\end{center}
%%%%%%%%%%%%%%%%%%%%%%%%%%%%%%

In particular, invoking the classification in \cite{MYarXiv2209}, we obtain the following dichotomy theorem for stable pinned planar $p$-elasticae without any assumption on the exponent $p$ nor the boundary condition.
Here we call that a flat-core pinned planar $p$-elastica is \emph{quasi-alternating} if the curve is not ruled out by the above theorem, or in other words given by a certain alternating concatenation of segments and multi-loops as in Figure~\ref{fig:flat-cores} (see Definition~\ref{def:alternating} for details).

\begin{corollary}\label{cor:flatcore-dichotomy}
    Any stable pinned planar $p$-elasticae is either a global minimizer (convex arc) or a quasi-alternating flat-core $p$-elastica.
\end{corollary}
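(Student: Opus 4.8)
The plan is to reduce Corollary~\ref{cor:flatcore-dichotomy} to the results already assembled in this section, so that the proof is essentially a bookkeeping argument over the known classification of pinned $p$-elasticae. First I would recall from \cite{MYarXiv2209} the complete classification: any critical point of $\B_p$ in $\Ap$ is either wavelike or flat-core (the latter occurring only when $p>2$), and moreover the wavelike ones are exactly the $\frac{m}{2}$-fold well-periodic curves depicted in Figure~\ref{fig:Maple2}. By \eqref{H2} and \eqref{H3} — which hold for $\B_p$ for all $p\in(1,\infty)$ as noted above — a local minimizer is automatically a critical point, so it suffices to show that every stable critical point is either the global minimizer or a quasi-alternating flat-core $p$-elastica.

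Next I would dispose of the wavelike case entirely using Theorem~\ref{thm:wavelike_unique}: a wavelike pinned $p$-elastica is a local minimizer of $\B_p$ in $\Ap$ if and only if it is the global minimizer (the convex arc), which is precisely the first alternative of the dichotomy. Then I turn to the flat-core case, which only arises for $p>2$. Here the relevant dividing line is Definition~\ref{def:alternating}: a flat-core pinned $p$-elastica is called \emph{quasi-alternating} exactly when it is \emph{not} ruled out by Theorem~\ref{thm:unstable-flatcore}, i.e.\ it neither contains two adjacent loops in opposite directions nor has a loop touching an endpoint. Consequently, any flat-core pinned $p$-elastica that is \emph{not} quasi-alternating falls under one of the two scenarios of Theorem~\ref{thm:unstable-flatcore} and is therefore unstable, hence not a local minimizer. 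Combining: a stable flat-core pinned $p$-elastica must be quasi-alternating, which is the second alternative.

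Putting the two cases together yields the claim: a stable pinned planar $p$-elastica is a critical point by \eqref{H2}, it is wavelike or flat-core by the classification in \cite{MYarXiv2209}; if wavelike it is the global minimizer by Theorem~\ref{thm:wavelike_unique}, and if flat-core it is quasi-alternating by Theorem~\ref{thm:unstable-flatcore} together with Definition~\ref{def:alternating}. I expect the only genuinely delicate point to be ensuring that the terminology matches up cleanly — namely that Definition~\ref{def:alternating} really is the precise logical negation (within the flat-core family) of the two conditions appearing in Theorem~\ref{thm:unstable-flatcore}, so that no flat-core $p$-elastica slips through both the theorem and the definition. This is a matter of carefully stating Definition~\ref{def:alternating} in Section~\ref{sect:p-elastica} so that ``not ruled out by Theorem~\ref{thm:unstable-flatcore}'' and ``quasi-alternating'' are tautologically equivalent; once that is set up, the corollary is immediate and requires no further computation.
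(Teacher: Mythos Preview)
Your proposal is correct and matches the paper's proof essentially line for line: both split into the wavelike versus flat-core cases via the classification in \cite{MYarXiv2209}, dispose of the wavelike case by Theorem~\ref{thm:wavelike_unique}, and handle the flat-core case by observing that the two instability criteria in Theorem~\ref{thm:unstable-flatcore} (equivalently Propositions~\ref{prop:flat-unstable-pin} and~\ref{prop:flat-unstable-clamp}) are precisely the negations of conditions (i) and (ii) in Definition~\ref{def:alternating}. Your caveat about making the terminology line up is exactly right, and in the paper this alignment is built into Definition~\ref{def:alternating} by design.
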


It is an interesting open problem whether there indeed exists a stable quasi-alternating flat-core $p$-elastica.
If it were true, then it would give an interesting example of a stabilization phenomenon due to degeneracy.

\subsection{A trick}\label{sect:trick}

Finally we explain the main idea that underlies most of our arguments.
All the aforementioned previous studies \cite{Born,Maddocks1981,Maddocks1984,LS_85,Sac08_2,Sac08_3, SL10, Sac12, SS14} involve explicit computations on Jacobian elliptic functions or some functional analytic structures, which are based on the quadraticity to some degree.
On the other hand, Avvakmov--Karpenkov--Sossinsky \cite{AKS} gave a new proof that multiply-covered figure-eight elasticae are unstable in the plane, by directly constructing an energy-decreasing perturbation.
This method has the strong advantage that it is almost purely based on the geometric structure of the curve.
However, their construction relies on the very special geometry of the figure-eight elastica.
A particularly important point is that their construction involves rescaling and hence does not directly extend to some non-closed curves.
In fact, such a direct construction of an energy-decreasing perturbation is often obstructed by the fixed-length constraint.

A trick we propose here to circumvent the above obstruction is a very simple indirect argument.
The key idea is to construct an \emph{equal-energy} competitor that lies in the energy space but is \emph{not regular} enough to be minimal or stable, implying a contradiction to our natural regularity hypothesis.
It turns out that the constructions of equal-energy competitors are reduced to surprisingly simple `cut-and-paste' arguments, with the help of intrinsic periodicity and symmetry of objective critical points.
We demonstrate the proof of Theorem \ref{thm:planar_global} for a special example of an elastica (but in fact the idea is completely same in the general case).
As in Figure \ref{fig_3}, for a given `more than one period' orbitlike elastica we can rotate a `one-period' part to construct a competitor which is of class $W^{2,2}$ and has the unchanged length and bending energy but cannot be a global minimizer since the discontinuity of the curvature at the cut points contradicts the $C^2$-regularity, completing the proof.
To construct local perturbations we need more structures as assumed in Theorem \ref{thm:planar_local} since the perturbations need to be close to the original curve.

\begin{figure}[htbp]
\centering
\includegraphics[width=12cm]{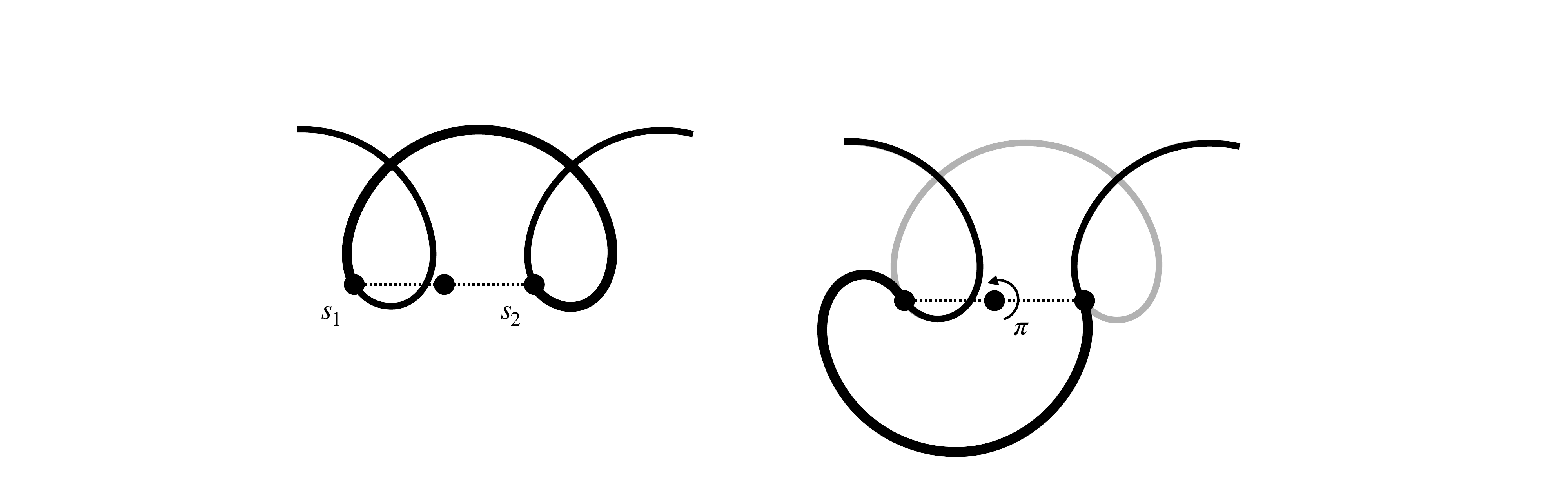} 
\caption{An original curve $\gamma$ with two points $s_1, s_2$ satisfying $\gamma'(s_1)=\gamma'(s_2)$ and $k(s_1)=k(s_2)\neq0$ (left) and the competitor $\bar{\gamma}$ with equal energy to $\gamma$ (right).}
\label{fig_3}
\end{figure}

One of the main discoveries in our study would be the fact that this simple trick is quite robust.
Although the new results presented above already provide general and optimal rigidity results for planar critical points, we also expect more.
As an example, in Section \ref{sect:spatial}, we apply our trick to elasticae in $\R^3$ to obtain old and new spatial rigidity results in a unified manner.

\section{Preliminary}\label{sect:pre}

\subsection{Concatenation}

We prepare a notation on a concatenation of curves: 
For $\gamma_j:[a_j,b_j]\to\R^2$ with $L_j:=b_j-a_j\geq0$, we define $\gamma_1\oplus\gamma_2:[0,L_1+L_2]\to\R^2$ by
\begin{align*}
  (\gamma_1\oplus\gamma_2)(s) :=
  \begin{cases}
    \gamma_1(s+a_1), \quad & s \in[0, L_1], \\
    \gamma_2(s+a_2-L_1) +\gamma_1(b_1)-\gamma_2(a_2),  & s \in [L_1,L_1+L_2], 
  \end{cases}
\end{align*}
and inductively define $\gamma_1\oplus\dots\oplus\gamma_N := (\gamma_1\oplus\dots\oplus\gamma_{N-1})\oplus\gamma_N$.
We also define 
\[
\bigoplus_{j=1}^N \gamma_j :=\gamma_1\oplus\dots\oplus\gamma_N.
\]
This notation will be useful for our cut-and-paste procedures.
Similarly, for a point $P\in \R^2$ and a curve $\gamma:[0,L]\to\R^2$, we define 
\begin{align*}
  (P\oplus\gamma)(s) := \gamma(s)-\gamma(0)+P, \quad  s \in[0, L]. 
\end{align*}

\subsection{Well-periodic curves}

Here we collect some fundamental properties of well-periodic curves in Definition \ref{def:well-periodic}.

Recall that a function $\Phi$ is said to be antiperiodic if there is $T>0$ such that $-\Phi(x) = \Phi(x+T)$ holds for all $x\in\R$, and this $T$ is called antiperiod.
The antiperiod $T$ of a well-periodic curve in Definition \ref{def:well-periodic} is uniquely determined as it is also related with the zero set of the curvature.
We stress that we have imposed continuity on the curvature (and $\Phi$) of a well-periodic curve; this is natural in view of our regularity hypothesis.
This continuity implies that $\Phi$ does not change the sign on each connected component of $\R\setminus T\Z$.
In addition, the combination of odd symmetry and periodicity implies the symmetry $\Phi(\frac{T}{2}+s)=\Phi(\frac{T}{2}-s)$, and also the sign-changing property $\Phi(s)\Phi(s+T)<0$ for $s\in\R\setminus T\Z$.

The following lemma exhibits some properties of well-periodic curves which we will use later.
The proof is elementary and safely omitted.

%%%%%%%%%%%%%%%%%%%%%%%%%%%%%%%%%%%%%%
\begin{lemma}[Symmetry of well-periodic curves] \label{lem:0517}
Let $\gamma:[0,L]\to\R^2$ be a well-periodic planar curve and $k$ be the signed curvature of $\gamma$.
If $k$ has three points $0\leq s_1<s_2 < s_3 \leq L$ such that
\begin{align} \label{eq:lem0517} 
 k(s_i)=0 \ \ (i=1, 2, 3), \quad \text{and}\ \ k(s)\ne0  \ \  \text{for}  \ \ s \in(s_1, s_2) \cup (s_2, s_3),
\end{align}
then we have
\begin{align} 
&\gamma'(s_1+\sigma) = \gamma'(s_3+\sigma) \ \ \text{holds whenever} \ \ s_1+\sigma, \ s_3+\sigma\in[0,L], \label{eq:1022-1}\\
&k(\sigma+s_1) = -k(s_3-\sigma) \ \ \text{holds whenever} \ \ \sigma+s_1 , \ s_3-\sigma \in [0,L].  \notag%\label{eq:1022-2}
\end{align}
\end{lemma}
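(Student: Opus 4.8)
\textbf{Proof plan for Lemma~\ref{lem:0517}.}

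The plan is to exploit the representation $k(s)=\Phi(s-s_0)$, where $\Phi$ is odd, antiperiodic with antiperiod $T$, and has zero set exactly $T\Z$. First I would pin down the three prescribed zeros. Since $k(s_i)=\Phi(s_i-s_0)=0$ for $i=1,2,3$, each $s_i-s_0$ lies in $T\Z$; and since $k$ does not vanish on $(s_1,s_2)$ or on $(s_2,s_3)$, these three points are \emph{consecutive} zeros of $\Phi$. Hence there is an integer $\ell$ with $s_1-s_0=\ell T$, $s_2-s_0=(\ell+1)T$, $s_3-s_0=(\ell+2)T$, so in particular $s_3-s_1=2T$, i.e.\ $s_3=s_1+2T$, twice the antiperiod. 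This is the structural fact that makes everything work.

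Next I would prove the curvature identity $k(\sigma+s_1)=-k(s_3-\sigma)$. Using $s_3=s_1+2T$ we compute $k(s_3-\sigma)=\Phi(s_1-s_0+2T-\sigma)$; applying antiperiodicity twice ($\Phi(x+2T)=\Phi(x)$) this equals $\Phi(s_1-s_0-\sigma)=\Phi(-( \sigma-(s_1-s_0)))=-\Phi(\sigma+s_1-s_0)=-k(\sigma+s_1)$, where the middle step is oddness of $\Phi$. This is valid for every $\sigma$ for which both arguments land in $[0,L]$, as claimed. (Equivalently one can phrase it as the antiperiod-$T$, odd symmetry about the midpoint $s_2$, but the two-antiperiod shift is cleanest.)

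For the tangent identity $\gamma'(s_1+\sigma)=\gamma'(s_3+\sigma)$ I would integrate the curvature. Writing $\theta(s)$ for the tangent angle, so that $\gamma'(s)=(\cos\theta(s),\sin\theta(s))$ and $\theta'(s)=k(s)=\Phi(s-s_0)$, it suffices to show $\theta(s_3+\sigma)-\theta(s_1+\sigma)$ is a constant multiple of $2\pi$ (in fact I expect it to be constant in $\sigma$, then evaluate the constant). Differentiating in $\sigma$ gives $k(s_3+\sigma)-k(s_1+\sigma)=\Phi(s_1-s_0+2T+\sigma)-\Phi(s_1-s_0+\sigma)=0$ by $2T$-periodicity of $\Phi$, so the difference $\theta(s_3+\sigma)-\theta(s_1+\sigma)$ is indeed independent of $\sigma$; evaluating at $\sigma=0$ and using that $\theta(s_3)-\theta(s_1)=\int_{s_1}^{s_3}\Phi(s-s_0)\,ds=\int_{\ell T}^{(\ell+2)T}\Phi=0$ (the integral of $\Phi$ over any two consecutive antiperiods vanishes, since $\int_0^T\Phi+\int_T^{2T}\Phi=\int_0^T\Phi-\int_0^T\Phi=0$ by antiperiodicity), the constant is $0$. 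Hence $\theta(s_3+\sigma)\equiv\theta(s_1+\sigma)$ and therefore $\gamma'(s_3+\sigma)=\gamma'(s_1+\sigma)$ wherever both are defined.

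The only mildly delicate point — really the main thing to get right rather than a genuine obstacle — is the bookkeeping of domains of validity: the identities are asserted only for those $\sigma$ with the relevant arguments in $[0,L]$, and one must make sure the continuity of $k$ (imposed in Definition~\ref{def:well-periodic}) and the ``consecutive zeros'' hypothesis \eqref{eq:lem0517} are used to guarantee $\ell$ is the \emph{same} integer for all three points, with no extra zero of $\Phi$ slipping between them. Once that is clear, every step is a one-line application of oddness and ($2T$-)periodicity of $\Phi$, so the proof is genuinely elementary and can be left to the reader as the paper does.
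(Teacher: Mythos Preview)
Your plan is correct and essentially identical to the paper's own (suppressed) argument: pin down $s_3-s_1=2T$ from the consecutive-zeros hypothesis, derive the curvature identity from oddness and antiperiodicity of $\Phi$, and obtain the tangent identity by showing $\theta(s_3+\sigma)-\theta(s_1+\sigma)$ is constant (via $2T$-periodicity of $k$) and equal to $\int_{s_1}^{s_3}k=0$ (via antiperiodicity). One small slip to fix: in your curvature chain, oddness alone yields $\Phi\bigl(-(\sigma-(s_1-s_0))\bigr)=-\Phi\bigl(\sigma-(s_1-s_0)\bigr)$, not $-\Phi\bigl(\sigma+(s_1-s_0)\bigr)$; to reach $-k(\sigma+s_1)=-\Phi(\sigma+(s_1-s_0))$ you must also invoke $s_1-s_0=\ell T$ together with $2T$-periodicity, which you have already established but did not cite at that step.
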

%%%%%%%%%%%%%%%%%%%%%%%%%%%%%%%%%%%%%%

\if0
\begin{proof}
To begin with, by Definition \ref{def:well-periodic}, there is an odd, antiperiodic function $\Phi\in C(\R)$ with antiperiod $T>0$ such that $k(s)=\Phi(s-s_0)$ holds for some $s_0 \in \R$.
Then \eqref{eq:lem0517} implies that
\begin{align}\label{eq:1022-4}
s_1 = s_2 -T, \quad s_3 = s_2 +T.
\end{align}
Moreover, by $k(s_2)=0$, there is $l \in \N$ such that $s_2- s_0 =l T$. 
First we show \eqref{eq:1022-2}. 
From the antiperiodicity it follows that
\begin{align} \label{eq:1022-3}
k(s) = \Phi(s-s_0) = \Phi(s-s_2+ lT) =  (-1)^l \Phi(s-s_2)
\end{align}
for $s \in [0,L]$.
Then using $s_3-s_2= s_2-s_1=T$ and the oddness of $\Phi$, we have 
\begin{align*}
k(\sigma+s_1)= (-1)^l \Phi(\sigma + s_1-s_2) 
&=  (-1)^{l+1} \Phi(-\sigma - s_1 + s_2) \\
&=  (-1)^{l+1} \Phi(-\sigma - s_2 + s_3) 
= -k(-\sigma+s_3)
\end{align*}
for $\sigma$ satisfying $ \sigma+s_1 , \ s_3-\sigma \in [0,L]$. 

Next we check \eqref{eq:1022-1}.
Note that $\gamma'$ is given by
\[
\gamma'(s) = Q 
\begin{pmatrix}
\cos\theta(s)\\
\sin\theta(s)  \\
\end{pmatrix}, 
\quad
\theta(s):= \int_0^s k(r)\,dr,
\]
for some $Q\in SO(2)$.
Hence it suffices to show $\theta(s_1+\sigma)=\theta(s_3+\sigma)$ for $\sigma$ satisfying $\ s_1+\sigma, \ s_3+\sigma\in[0,L]$.
For such $\sigma$, it follows by the $2T$-periodicity of $k$ and $s_3-s_1=2T$ that 
\begin{align}\label{eq:1022-5}
\theta(s_3+\sigma)-\theta(s_1+\sigma)
=\int_{s_1+\sigma}^{s_3+\sigma} k(s)\,ds 
=\int_{s_1}^{s_3} k(s)\,ds
\end{align}
Furthermore, it follows from \eqref{eq:1022-4}, \eqref{eq:1022-3}, and the antiperiodicity of $\Phi$ that  
\begin{align*}
\int_{s_2}^{s_3} k(s)\,ds 
&=\int_{s_2}^{s_3} (-1)^l \Phi(s-s_2)\,ds  
=\int_{s_1}^{s_2} (-1)^l \Phi(s-s_2 +T)\,ds \\
&= - \int_{s_1}^{s_2} (-1)^l \Phi(s-s_2)\,ds 
= - \int_{s_1}^{s_2} k(s)\,ds.
\end{align*}
Hence the right hand side of \eqref{eq:1022-5} is equal to $0$, so that we obtain \eqref{eq:1022-1}.
\end{proof}
\fi

In addition, if a well-periodic curve $\gamma:[0,L]\to\R^2$ is $\frac{m}{2}$-fold in the sense of Definition \ref{def:m-fold}, its signed curvature $k$ satisfies that
\begin{align}\label{eq:0430-0}
k(s) = 0 \ \iff \ s=0, \ \frac{1}{m}L, \ \frac{2}{m}L, \ \ldots, \ L.
\end{align}
In particular, we may assume that $k=\Phi$ without loss of generality.

%%%%%%%%%%%%%%%%%%%%%%%%%%%%%%%%%%%%%%
\begin{remark}
The definition of `$\frac{m}{2}$-fold' here is in line with the terminology of `$\frac{m}{2}$-fold figure-eight $p$-elastica' used in \cite[Definition 5.3]{MYarXiv2203}.
Note that $(p,m,r)$-arcs and $(p,m,r)$-loops defined in \cite[Definition 3.2]{MYarXiv2209} are also $\frac{m}{2}$-fold well-periodic curves (see Figure \ref{fig:Maple2} for $p=2$ and $m=1,2,3$).
\end{remark}
%%%%%%%%%%%%%%%%%%%%%%%%%%%%%%%%%%%%%%

%%%%%%%%%%%%%%%%%%%%%%%%%%%%%%%%%%%%%%
%%%%%%%%%%%%%%%%%%%%%%%%%%%%%%%%%%%%%%
%%%%%%%%%%%%%%%%%%%%%%%%%%%%%%%%%%%%%%
\section{Rigidity under the clamped boundary condition} \label{sect:clamped}

In this section we prove Theorems \ref{thm:planar_global} and \ref{thm:planar_local}.

\subsection{Rigidity for minimality}\label{subsect:minimal_clamped}

We first prove Theorem~\ref{thm:planar_global}.
The key idea is already mentioned in Section~\ref{sect:trick} and Figure \ref{fig_3}.

\begin{proof}[Proof of Theorem~\ref{thm:planar_global}]
We argue by contradiction.
Suppose that $\gamma$ is a global minimizer of $\F$ in $\Ac$, and has two points $s_1, s_2 \in [0,L]$ satisfying \eqref{eq:0831-2}.
Up to reversing the parametrization, we may assume that $s_1$ may be equal to $0$ but $s_2<L$.
Let us define a continuous function $\bar{\gamma}:[0,L]\to\R^2$ by 
\begin{align*}
\bar{\gamma}:=
\gamma|_{[0,s_1]} \oplus R \Big(\gamma|_{[s_1, s_2]}\Big) \oplus \gamma|_{[s_2,L]}, 
\end{align*}
where $R$ denotes the affine transformation describing the rotation through 180 degrees  around the point
\[
c:=\frac{1}{2} \Big(\gamma(s_1) +\gamma(s_2) \Big).
\]
In particular, we see that
\[
\bar{\gamma}(s) = -\gamma(-s+s_1+s_2)+2c, \quad  s\in [s_1, s_2].
\]
By \eqref{eq:0831-2} and by definition of $\bar{\gamma}$ we have $\bar{\gamma} \in C^1([0,L];\R^2)$, particularly noting that
\[
\lim_{s\downarrow s_1}\bar{\gamma}'(s) = \gamma'(s_2)= \gamma'(s_1), \quad 
\lim_{s\uparrow s_2} \bar{\gamma}'(s) = \gamma'(s_1)= \gamma'(s_2). 
\] 
We also notice that the signed curvature $\bar{k}$ of $\bar{\gamma}$ is 
\begin{align*} %\label{eq:varkappa}
\bar{k}(s)= 
\begin{cases}
-k(-s+s_1+s_2), \quad & s\in(s_1, s_2), \\
k(s), &s\in [0,L] \setminus [s_1, s_2],
\end{cases}
\end{align*}
which is bounded and hence $\bar{\gamma} \in W^{2, \infty}(0,L;\R^2)\subset W^{2,p}(0,L; \R^2)$.
Since $\bar{\gamma}$ satisfies the same clamped boundary condition as $\gamma$ (even if $s_1=0$), we have $\bar{\gamma} \in \Ac$.
Moreover, the above formula for $\bar{k}$ implies $\F(\bar{\gamma})=\F(\gamma)$.
Hence $\bar{\gamma} $ is also a global minimizer of $\F$ in $\Ac$.
Then it follows from \eqref{H1*} that $\bar{\gamma}\in C^2([0,L];\R^2)$.
On the other hand,  we see that by definition of $\bar{k}$,
\[ 
\lim_{s\uparrow s_2} \bar{k}(s) = -k(s_1), \quad 
\lim_{s\downarrow s_2} \bar{k}(s) = k(s_2).
\]
This together with \eqref{eq:0831-2} implies that 
$\bar{\gamma} \not\in C^2(0,L;\R^2)$.
This is a contradiction.
\end{proof}

%%%%%%%%%%%%%%%%%%%%%%%%%%%%%%%%%%%%%%
\begin{remark} \label{rem:optimal-global}
Theorem \ref{thm:planar_global} is optimal in the sense that if $s_2-s_1=L$ then the assertion may fail. 
In fact, in the case of $P_0=P_1$ and $V_0=V_1$ (i.e., closed curves), a one-fold circle is a global minimizer of $\mathcal{B}_2$ in $\Ac$ but its endpoints satisfy \eqref{eq:0831-2}.
\end{remark}
%%%%%%%%%%%%%%%%%%%%%%%%%%%%%%%%%%%%%%

\subsection{Rigidity for stability}\label{subsect:stable_clamped}

Now we turn to the proof of Theorem \ref{thm:planar_local}.
We first give an abstract criterion based on a contradiction argument, which clarifies how to deduce instability.
As in the proof of Theorem \ref{thm:planar_global}, a key point is that we allow equality in condition (ii) below.

%%%%%%%%%%%%%%%%%%%%%%%%%%%%%%%%%%%%%%  
\begin{lemma} \label{thm:Magic}
Suppose \eqref{H1**} holds.
Let $\gamma\in\Ac$ 
and suppose that there exists a sequence $\{\gamma_j\}_{j\in \N} \subset \Ac$ satisfying the following three conditions:
\begin{align} \tag{C} \label{seq}
\begin{cases}
\mathrm{(i)} \ \ \gamma_j \to \gamma  \quad \text{in}\quad W^{2,p}(0,L;\R^2) \ \  \text{as} \ \  j\to \infty, \\
\mathrm{(ii)} \ \  \F(\gamma_j) \leq \F(\gamma) \quad \text{for all (large)}\quad j\in \N, \\
\mathrm{(iii)} \ \ \gamma_j \notin C^2(0,L;\R^2) \quad \text{for all (large)} \quad j\in \N.
\end{cases}
\end{align}
Then $\gamma$ is {\em not} a local minimizer of $\F$ in $\Ac$.
\end{lemma}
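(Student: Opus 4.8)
The plan is to argue by contradiction in the natural way. Suppose $\gamma$ \emph{is} a local minimizer of $\F$ in $\Ac$; then by hypothesis \eqref{H1**} we have $\gamma\in C^2([0,L];\R^2)$, and there exists a $W^{2,p}$-neighborhood $U$ of $\gamma$ in $\Ac$ such that $\F(\gamma)\leq\F(\gamma')$ for all $\gamma'\in U$. Now invoke condition (i): since $\gamma_j\to\gamma$ in $W^{2,p}$, for all large $j$ we have $\gamma_j\in U$, hence $\F(\gamma)\leq\F(\gamma_j)$. Combining this with condition (ii), $\F(\gamma_j)\leq\F(\gamma)$, we obtain $\F(\gamma_j)=\F(\gamma)$ for all large $j$. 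Thus each such $\gamma_j$ is itself a global minimizer of $\F$ over $U$, and in particular a \emph{local} minimizer of $\F$ in $\Ac$ (with the same neighborhood $U$, or any smaller one).

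Then apply \eqref{H1**} once more, this time to $\gamma_j$ in place of $\gamma$: since $\gamma_j$ is a local minimizer of $\F$ in $\Ac$, it must satisfy $\gamma_j\in C^2([0,L];\R^2)$. But this directly contradicts condition (iii), which asserts $\gamma_j\notin C^2(0,L;\R^2)$ for all large $j$. Hence $\gamma$ cannot be a local minimizer, completing the proof. One small point to be careful about is the slight discrepancy between $C^2([0,L];\R^2)$ as stated in \eqref{H1**} and $C^2(0,L;\R^2)$ as stated in (iii): since $C^2([0,L];\R^2)\subset C^2(0,L;\R^2)$, membership in the former forces membership in the latter, so the contradiction with (iii) goes through exactly.

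I do not expect any genuine obstacle here: the statement is a clean abstract packaging of the contradiction mechanism used in the proof of Theorem~\ref{thm:planar_global}, isolating the roles of the three conditions so that later stability proofs only need to construct a suitable sequence $\{\gamma_j\}$. The only thing requiring the tiniest bit of attention is making sure that ``local minimizer'' is used consistently --- that a point which minimizes $\F$ over an open set $U$ in the $W^{2,p}$-topology indeed qualifies as a local minimizer in the sense defined in the introduction --- but this is immediate from the definition. So the write-up is essentially the three sentences above: use (i) to place $\gamma_j$ in the neighborhood, use (ii) to upgrade $\gamma_j$ to a local minimizer, use \eqref{H1**} to deduce $\gamma_j\in C^2$, and contradict (iii).
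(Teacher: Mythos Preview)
Your proposal is correct and follows essentially the same contradiction argument as the paper: assume $\gamma$ is a local minimizer, use (i) to bring $\gamma_j$ into the minimizing neighborhood, use (ii) to conclude that each $\gamma_j$ is itself a local minimizer, and then apply \eqref{H1**} to contradict (iii). The paper makes the ``smaller neighborhood'' point explicit by taking a $\delta$-ball around $\gamma$ and then a $\delta/2$-ball around $\gamma_j$, whereas you rely implicitly on the openness of $U$; both are fine.
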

%%%%%%%%%%%%%%%%%%%%%%%%%%%%%%%%%%%%%%
\begin{proof}
Let $\gamma\in\Ac$ satisfy the three conditions in \eqref{seq}.
We argue by contradiction, so suppose that $\gamma$ is a local minimizer of $\F$ in $\Ac$.
Then there is $\vd>0$ such that
\begin{align}\label{Yeq:0417.1}
\F(\gamma) \leq \F(\xi) \quad \text{for} \quad \xi\in\Ac \ \text{ with } \  \| \gamma - \xi \|_{W^{2,p}} < \vd.
\end{align} 
By \eqref{seq}-(i), there exists $j_0\in\N$ such that $\| \gamma- \gamma_j \|_{W^{2,p}} < \vd/2$ for $j\geq j_0$.
Henceforth we consider only $j\geq j_0$.
It follows from \eqref{seq}-(ii) and \eqref{Yeq:0417.1} that 
\begin{align*}
\F(\gamma_j) \leq \F(\gamma) \leq \F(\xi) \quad \text{for} \quad \xi\in\Ac \ \text{ with } \  \| \gamma_j - \xi \|_{W^{2,p}} < \frac{\vd}{2}.
\end{align*} 
This implies that $\gamma_j$ are also local minimizers of $\F$ in $\Ac$ for all $j\geq j_0$.
%, thus being critical points of $\F$ in $\Ac$.
Therefore, $\gamma_j \in C^2([0,L]; \R^2)$ follows from \eqref{H1**}, 
which contradicts \eqref{seq}-(iii).
This completes the proof.
\end{proof}

In order to show Theorem~\ref{thm:planar_local}, 
we now apply the above criterion to prove the instability of any well-periodic curve $\gamma$ that `strictly' contains three inflection points $s_1<s_2<s_3$.
Our perturbation $\gamma_j$ here is constructed as follows:
Noting that $\gamma$ has symmetry centered at $s_2$ as in Figure \ref{fig_2} (1), we first cut the curve $\gamma$ out at the perturbed points $s_1+\frac1j$ and $s_3+\frac1j$ (with large $j$) and then rotate it through $180$ degrees around the middle of the cut points as in Figure \ref{fig_2} (2).
By symmetry the perturbed curve $\gamma_j$ is close to the original one $\gamma$.
It turns out that by periodicity and our rotation procedure, the perturbed curve $\gamma_j$ has discontinuous curvature at the cut points.

\begin{figure}[htbp]
\centering
\includegraphics[width=12cm]{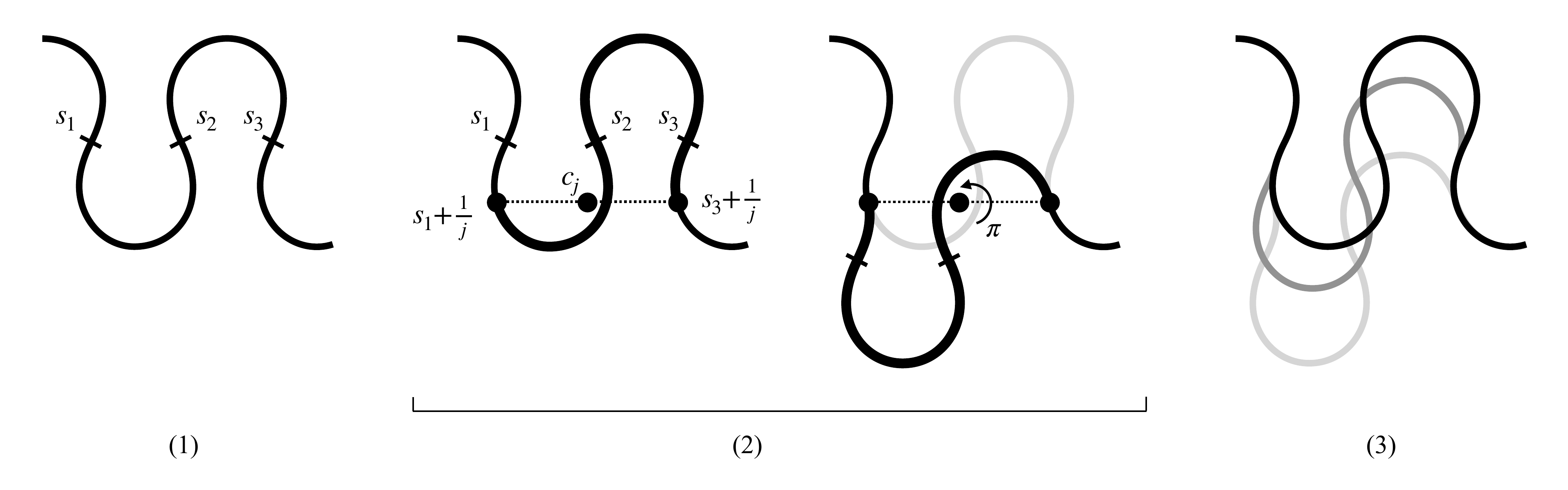} 
\caption{(1) An original well-periodic curve $\gamma$.
(2) Construction of a perturbation $\gamma_j$.
(3) Convergence as $j\to\infty$.
}
\label{fig_2}
\end{figure}

\begin{proof}[Proof of Theorem~\ref{thm:planar_local}]
Let $\gamma\in \Ac$ be a well-periodic curve and have three points satisfying \eqref{Yeq:0417.2}.
Up to reversing the parametrization and taking the first three inflection points, we may assume that $s_1,s_2,s_3$ satisfy \eqref{eq:lem0517}, in particular $s_3<L$.
In addition, up to reflection, we may assume that $k>0$ in $(s_1,s_2)$.
Then since $\gamma$ is well-periodic, $k<0$ in $(s_2, s_3)$ and $k>0$ in $(s_3,s_3+\delta)$ with some small $\delta\in(0,L-s_3)$.
For each integer $j\geq j_0$ with some $j_0$ such that $1/j_0<\delta$, we define a continuous function ${\gamma}_j : [0,L] \to \R^2$ by
\begin{align} \label{eq:0419.1}
\gamma_j:=\gamma|_{[0,s_1+\frac{1}{j}]}\oplus R_j\Big(\gamma|_{[s_1+\frac{1}{j}, s_3+\frac{1}{j}]}\Big) \oplus \gamma|_{[s_3+\frac{1}{j},L]}, 
\end{align}
where $R_j$ denotes the rotation through 180 degrees around 
\[ c_j := \frac{1}{2}\Big( \gamma(s_1+\tfrac{1}{j}) + \gamma(s_3+\tfrac{1}{j} ) \Big). \]
In particular, we see that 
\[
\gamma_j(s) = -\gamma(-s+s_1+s_3+\tfrac{2}{j}) + 2c_j, \quad s\in I_j:=(s_1 +\tfrac{1}{j}, s_3 +\tfrac{1}{j}). 
\]
In fact $\gamma_j$ is still a unit-speed $C^1$-curve by construction and by \eqref{eq:1022-1} in Lemma~\ref{lem:0517}; for example, around $s_1+\tfrac{1}{j}$, by computing the one-sided limits independently and using \eqref{eq:1022-1}, we deduce that $\gamma_j'(s_1+\tfrac{1}{j})=\gamma'(s_1+\tfrac{1}{j})$, and also the derivative $\gamma_j'$ is continuous by \eqref{eq:1022-1}, e.g.\ in view of
\begin{align*}
\lim_{s \downarrow s_1+\frac{1}{j} } \gamma_j' (s)
=\lim_{s \downarrow s_1+\frac{1}{j} } \gamma' (-s+s_1+s_3+\tfrac{2}{j}) 
= \gamma'(s_3+\tfrac{1}{j}) = \gamma'(s_1+\tfrac{1}{j});
\end{align*}
the same argument works for $s_3+\tfrac{1}{j}$.
Hence $\gamma_j \in C^1([0,L];\R^2)$ for any $j\geq j_0$.
In addition, by \eqref{eq:0419.1} the signed curvature $k_j$ of $\gamma_j$ is given by
\begin{align} \label{eq:0428-1}
k_j(s) =
\begin{cases}
-k(-s+s_1+s_3+\frac{2}{j}), \quad &s\in I_j \\
k(s), &s\in [0,L] \setminus \bar{I}_j, 
\end{cases}
\end{align}
where $\bar{I}_j:=[s_1 +\frac{1}{j}, s_3 +\frac{1}{j}]$.
Since $k\in C([0,L])$ by Definition~\ref{def:well-periodic}, 
we have $k_j\in L^{\infty}(0,L)$ and hence $\gamma_j \in W^{2, \infty}(0,L; \R^2)\subset W^{2,p}(0,L; \R^2)$ for all $j\geq j_0$.
Moreover, by \eqref{eq:0419.1} and the fact that $s_1+\frac{1}{j}>0$ and $s_3+\frac{1}{j}<L$, it is clear that the curve $\gamma_j$ satisfies the same boundary condition as $\gamma$.
Thus we have $\{\gamma_j\}_{j\geq j_0} \subset \Ac$.

Now we prove that the sequence $\{\gamma_j\}_{j\geq j_0}$ satisfies all the conditions in \eqref{seq}.

We first check \eqref{seq}-(i).
Recall that by Lemma~\ref{lem:0517},
\[
k(s) = -k(-s+s_1+s_3), \quad s\in I_j.
\]
This together with \eqref{eq:0428-1} implies that $k_j \to k$ a.e.\ in $(0,L)$.
In addition, noting that $\|k_j\|_{L^\infty} \leq \|k\|_{L^\infty}$, we obtain $k_j \to k$ in $L^p(0,L)$.
From this convergence and the fact that $\gamma(0)=\gamma_j(0)$ and $\gamma'(0)=\gamma_j'(0)$, we deduce that $\gamma_j \to \gamma$ in $W^{2,p}(0,L;\R^2)$.

Next we check \eqref{seq}-(ii).
Since
\begin{align*}
\int_{s_1+\frac{1}{j}}^{s_3+\frac{1}{j}} f\big(| k_j(s) | \big) \,ds
&=\int_{s_1+\frac{1}{j}}^{s_3+\frac{1}{j}} f\big(| k(-s +s_1 +s_3 +\tfrac{2}{j}) | \big) \,ds \\
&=\int_{s_3+\frac{1}{j}}^{s_1+\frac{1}{j}} - f\big(| k(s) | \big) \,  ds
= \int_{s_1+\frac{1}{j}}^{s_3+\frac{1}{j}} f\big(| k(s) | \big)   \,ds
\end{align*}
and since $\gamma$ and $\gamma_j$ agree elsewhere, we have $\F(\gamma_j) =\F(\gamma) $.

Finally, the discontinuity of curvature in \eqref{seq}-(iii) follows since for any large $j$, thanks to $k|_{(s_1,s_2)}>0$, we have
\[
\lim_{s \uparrow s_1+\frac{1}{j}} k_j(s)= k(s_1+\tfrac{1}{j})>0, \quad
\lim_{s \downarrow s_1+\frac{1}{j}} k_j(s)= -k(s_3+\tfrac{1}{j})=-k(s_1+\tfrac{1}{j})<0,
\]
and hence $k_j$ is not continuous at $s=s_1+\frac{1}{j}$.
The proof is complete.
\end{proof}

%%%%%%%%%%%%%%%%%%%%%%%%%%%%%%%%%%%%%%
\begin{remark} \label{rem:M0503}
Since $ \Ac \subset \Ap$, Theorem~\ref{thm:planar_global} (resp.\ Theorem \ref{thm:planar_local}) holds true with $\Ac$ replaced by $\Ap$ whenever hypothesis \eqref{H1*} (resp.\ \eqref{H1**}) holds true for all unit vectors $V_0, V_1$.
\end{remark}
%%%%%%%%%%%%%%%%%%%%%%%%%%%%%%%%%%%%%%

\section{Rigidity under the pinned boundary condition}\label{sect:pinned}

This section is devoted to the proofs of Theorems \ref{thm:pinned2} and \ref{thm:pinned1*}.

\subsection{Instability of one-fold waves}

We first prove Theorem \ref{thm:pinned2}.
In the same spirit as Lemma~\ref{thm:Magic}, it is sufficient to construct $\{\gamma_j\}_{j\in\N} \subset \Ap$ satisfying 
\begin{align} \tag{C$_{\rm p}$} \label{seq:pin}
\begin{cases}
\mathrm{(i)} \ \ \gamma_j \to \gamma  \quad \text{in}\quad W^{2,p}(0,L;\R^2) \ \  \text{as} \ \  j\to \infty, \\
\mathrm{(ii)} \ \  \F(\gamma_j) \leq \F(\gamma) \quad \text{for all (large)}\quad j\in \N, \\
\mathrm{(iii)} \ \ \text{the curvature}\  k_j \ \text{of} \ \gamma_j \  \text{satisfies} \ k_j(0)\ne0  \quad \text{for all (large)} \quad j\in \N.
\end{cases}
\end{align}
Here we construct such a perturbation $\gamma_j$ by extending the original curve by using its intrinsic periodicity and shift the domain as in Figure \ref{fig_4} so that the curvature does not vanish at the endpoints:

\begin{figure}[htbp]
\centering
\includegraphics[width=12cm]{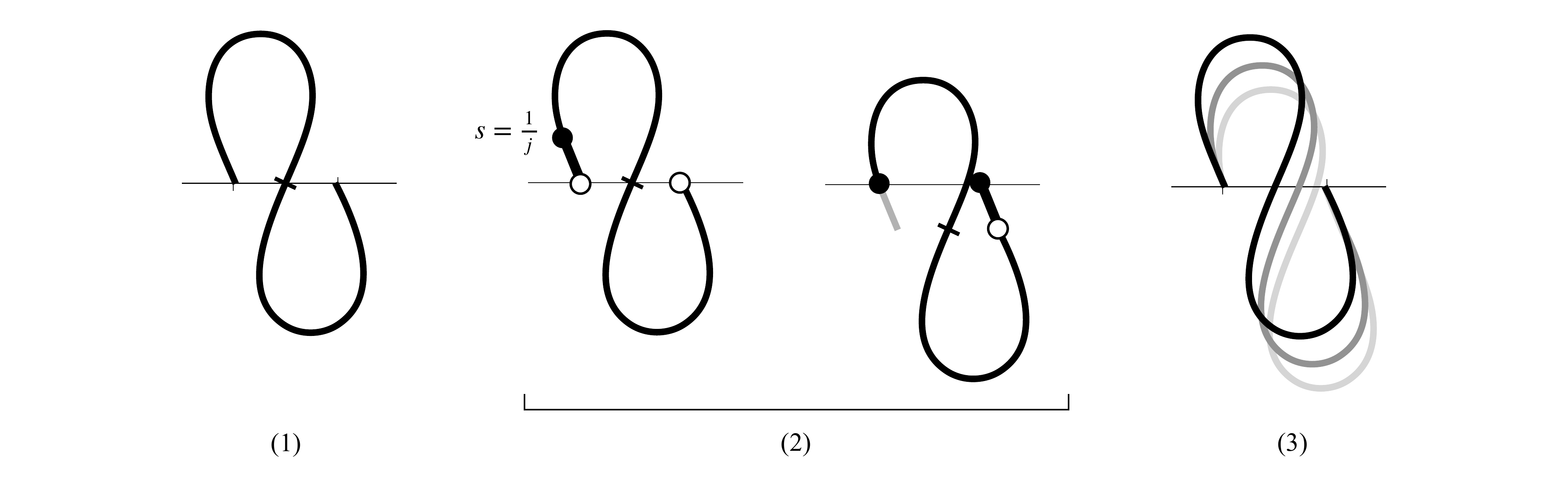} 
\caption{(1) A $1$-fold well-periodic curve $\gamma$.
(2) Construction of a perturbation $\gamma_j$.
(3) Convergence as $j\to\infty$.}
\label{fig_4}
\end{figure}

\begin{proof}[Proof of Theorem \ref{thm:pinned2}]
Recalling \eqref{eq:0430-0}, we infer that the curvature $k$ of $\gamma$ satisfies 
\begin{align} \label{eq:0430-1}
k(s) = 0 \ \iff \ s=0, \ \tfrac{L}{2}, \ L.
\end{align}
For each integer $j\geq j_0$ with some $j_0$ such that $1/j_0<L$, we define a continuous map $\gamma_j : [0,L] \to \R^2$ by
\begin{align}\label{eq:0430-2}
\gamma_j := P_0\oplus\gamma|_{[\frac{1}{j},L]}\oplus\gamma|_{[0,\frac{1}{j}]}. 
\end{align}
Then we infer from Lemma~\ref{lem:0517} that $\gamma'(0)=\gamma'(L)$ and hence
\[ 
\lim_{s \downarrow L-\frac{1}{j} } \gamma_j'(s)= \gamma' (0)=  \gamma' (L)= \lim_{s \uparrow L-\frac{1}{j} }  \gamma_j'(s),
\]
which implies that $\gamma_j \in C^1([0,L];\R^2)$ for each $j\geq j_0$.
The signed curvature $k_j$ of $\gamma_j$ is 
\begin{align} \label{eq:1fold-kj}
k_j (s)
:=
\begin{cases}
k(s+\tfrac{1}{j}) \quad &s\in[0,L-\tfrac{1}{j}), \\
k(s-L+\tfrac{1}{j})  \quad &s\in(L-\tfrac{1}{j}, L].
\end{cases}
\end{align}
Since $\lim_{s \uparrow L-\frac{1}{j}}k_j(s)=\lim_{s \downarrow L-\frac{1}{j}}k_j(s)=0$ holds by \eqref{eq:0430-1}, 
$k_j$ is continuous in $[0,L]$, and hence $\gamma_j \in C^2([0,L];\R^2)$.
By definition we have $\gamma_j (0)=P_0$, $\gamma_j(L)=P_1$, and $\mathcal{L}[\gamma_j]=L$.
Thus we have $\{\gamma_j\}_{j\geq j_0} \subset \Ap$.

Now we prove that the sequence $\{\gamma_j\}_{j\geq j_0}$ satisfies all the conditions in \eqref{seq:pin}. 

First, we check \eqref{seq:pin}-(i).
It follows from \eqref{eq:1fold-kj} that $k_j \to k$ a.e.\ in $(0,L)$.
Combining this with the fact that $\|k_j\|_{L^{\infty}}=\|k\|_{L^{\infty}}$, we see that $k_j\to k$ in $L^p(0,L)$.
Since $\gamma_j(0)=\gamma(0)$ and $\gamma_j'(0)=\gamma'(\frac{1}{j})\to\gamma'(0)$, we have $ \gamma_j \to \gamma$ in $W^{2,p}(0,L; \R^2)$.

Next, we check \eqref{seq:pin}-(ii).
It follows that
\begin{align}\notag%\label{eq:pinseq-ii}
\begin{split}
\F(\gamma_j) &= \int_0^{L-\frac{1}{j}} f\big( |k_j(s) | \big) \,ds 
+ \int_{L-\frac{1}{j}}^L f\big( |k_j(s) | \big) \,ds \\
&= \int_0^{L-\frac{1}{j}} f\big( |k(s+\tfrac{1}{j}) | \big) \,ds + \int_{L-\frac{1}{j}}^L f\big( |k(s-L+\tfrac{1}{j}) | \big) \,ds 
 = \F(\gamma),
 \end{split}
\end{align}
and hence $\{ \gamma_j \}_{j\geq j_0} $ satisfies $\F(\gamma_j)= \F(\gamma)$.

Finally, we show \eqref{seq:pin}-(iii), i.e., the curvature at an endpoint does not vanish.
In fact, it follows from \eqref{eq:0430-1} and \eqref{eq:0430-2} that
$k_j(0) = k(\tfrac{1}{j}) \ne0$ for any $j\geq j_0$.

Thus $\{\gamma_j\}_{j\geq j_0}$ satisfies all the conditions in \eqref{seq:pin}.
Therefore, if $\gamma$ were a local minimizer, then so were all $\gamma_j$ with large $j$, but this would contradict \eqref{H2}.
The proof is complete.
\end{proof}

\subsection{Instability of loops}

Here we prove Theorem \ref{thm:pinned1*}.
Throughout this subsection, for notational simplicity, without loss of generality we assume that
\begin{equation}\label{eq:endpoints_xaxis}
    \text{$P_0=(0,0)$ and $P_1=(l,0)$, where
$l:=|P_0-P_1| \in [0,L).$}
\end{equation}
To begin with, we note here that the antiperiodicity of the curvature of well-periodic curves yields the following symmetry of curves; this is closely related with the symmetry $\Phi(\frac{T}{2}+s)=\Phi(\frac{T}{2}-s)$ which we have already observed in Section \ref{sect:pre}.

%%%%%%%%%%%%%%%%%%%%%%%%%%%%%%%%%%%%%%
\begin{lemma} \label{lem:curve-sym}
Let $\gamma=(x,y):[0,L]\to\R^2$ be a $\frac{1}{2}$-fold well-periodic curve.
If $\gamma(0)=(0,0)$ and $\gamma(L)=(l,0)$ with some $l\neq 0$, then 
%\begin{align} \label{eq:curve-sym}
\[
x(L-s) + x(s) = l, \quad y(L-s)= y(s), \quad \text{for} \ \ s \in[0,L].
\]
%\end{align}
\end{lemma}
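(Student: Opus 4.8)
The plan is to exploit the antiperiodic-odd structure of the curvature of a $\frac{1}{2}$-fold well-periodic curve to produce a pointwise symmetry of the tangent angle, and then integrate. Since $\gamma$ is $\frac{1}{2}$-fold, by \eqref{eq:0430-0} its curvature $k$ vanishes exactly at $s=0$ and $s=L$, and $T=L$ is the antiperiod; in particular $k=\Phi$ where $\Phi$ is odd and antiperiodic with antiperiod $L$. First I would record the consequence of Lemma~\ref{lem:0517} applied with $s_1=0$, $s_3=L$ (there is no interior zero $s_2$, but the half-period symmetry $\Phi(\frac{L}{2}+\sigma)=\Phi(\frac{L}{2}-\sigma)$ noted in Section~\ref{sect:pre} plays the role of the needed identity): writing $\theta(s):=\int_0^s k(r)\,dr$ for the tangent angle, the key claim is that $\theta(L-s)=\theta(L)-\theta(s)$ for all $s\in[0,L]$, together with $\theta(L)=\pi$ modulo the ambiguity that will be pinned down by the loop/closure geometry.

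The core computation is short: from $k(L-s)=\Phi(L-s)=-\Phi(-s)=\Phi(s)=k(s)$ (using antiperiodicity then oddness) we get, after the change of variable $r\mapsto L-r$,
\begin{align*}
\theta(L-s) = \int_0^{L-s} k(r)\,dr = \int_s^{L} k(L-r)\,dr = \int_s^L k(r)\,dr = \theta(L)-\theta(s).
\end{align*}
Next I would determine $\theta(L)$. Since $\gamma(0)=(0,0)$, $\gamma(L)=(l,0)$ with $l\neq 0$, after a rotation we may take $\gamma'(s)=(\cos\theta(s),\sin\theta(s))$ with $\theta(0)=0$; the constraint that $\int_0^L \sin\theta(s)\,ds = y(L)-y(0)=0$ must be reconciled with the symmetry. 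Actually the cleanest route: the curve $s\mapsto (x(L-s)+x(s),\,y(L-s)-y(s))$ is what we want to show is constant; differentiating, its derivative is $(-\,x'(L-s)+x'(s),\,-\,y'(L-s)+y'(s)) = (\cos\theta(s)-\cos\theta(L-s),\ \sin\theta(s)-\sin\theta(L-s))$, and using $\theta(L-s)=\theta(L)-\theta(s)$ this vanishes identically iff $\theta(L)\in 2\pi\Z$ or the pair collapses — so I need $\theta(L)=0 \pmod {2\pi}$ — and then evaluating at $s=0$ gives the constants $x(L)+x(0)=l$ and $y(L)-y(0)=0$, which is exactly the claim. So the real content is pinning $\theta(L)\in 2\pi\Z$.

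To get $\theta(L)\in 2\pi\Z$ (equivalently $\gamma'(L)=\gamma'(0)$): apply Lemma~\ref{lem:0517} directly. The hypotheses \eqref{eq:lem0517} with $(s_1,s_2,s_3)$ require an interior zero, so instead I would invoke the tangent-periodicity built into Definition~\ref{def:well-periodic}: $k$ has period $2T=2L$, hence $\theta(s+2L)-\theta(s)=\int_s^{s+2L}k$ which by the odd-antiperiodic structure equals $\int_0^{2L}\Phi = 0$; but more usefully, on $[0,L]$ one computes $\theta(L)=\int_0^L\Phi(r)\,dr = \int_0^{L/2}\Phi + \int_{L/2}^L\Phi$, and substituting $r\mapsto L-r$ in the second integral together with $\Phi(L-r)=\Phi(r)$ shows the two halves are equal, so $\theta(L)=2\int_0^{L/2}\Phi(r)\,dr$. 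This is generally not in $2\pi\Z$ — which tells me the correct normalization is not $\theta(0)=0$ but rather a rotation chosen so that the axis of the $180^\circ$ point-symmetry of a half-period lies along a coordinate direction; equivalently, one chooses $Q\in SO(2)$ in the representation $\gamma'(s)=Q(\cos\theta(s),\sin\theta(s))^{T}$ so that the reflection symmetry $\Phi(\frac{L}{2}+\sigma)=\Phi(\frac{L}{2}-\sigma)$ becomes the statement that $\gamma$ is symmetric under reflection across the vertical line $x=x(\frac{L}{2})$ composed with... . The main obstacle, then, is bookkeeping the rigid motion: I expect the honest proof to fix the frame using $\gamma(0)=(0,0)$, $\gamma(L)=(l,0)$ to force the symmetry axis of the half-period reflection to be \emph{perpendicular} to $P_1-P_0$, after which $x(L-s)+x(s)=l$ and $y(L-s)=y(s)$ drop out by integrating the reflected tangent relation $\theta(L-s) = -\theta(s)$ (the sign arising from the reflection, not a rotation) over $[0,s]$. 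Concretely: show $\theta(L-s)=-\theta(s)$ in the correctly normalized frame, deduce $x'(L-s)=-x'(s)\cdot(-1)=\cos\theta(s)$ wait — rather $\frac{d}{ds}x(L-s) = -\cos\theta(L-s) = -\cos\theta(s)$, so $\frac{d}{ds}(x(L-s)+x(s))=0$; and $\frac{d}{ds}y(L-s) = -\sin\theta(L-s)=\sin\theta(s)=\frac{d}{ds}y(s)$, so $\frac{d}{ds}(y(L-s)-y(s))=0$; finally evaluate the two constants at $s=0$ using the endpoint data, and at $s=L$ as a consistency check. That is the whole argument; the only delicate point is justifying that the frame can be (and has been, via \eqref{eq:endpoints_xaxis}) chosen so that $\theta(L-s)=-\theta(s)$ rather than $\theta(L)-\theta(s)$, which is precisely where the constraint $y(L)=y(0)$ is used.
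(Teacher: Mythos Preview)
Your overall strategy is the same as the paper's: derive $k(L-s)=k(s)$ from the odd/antiperiodic structure, pass to a tangent-angle identity, and integrate. The curvature computation and the idea of differentiating $\big(x(L-s)+x(s),\,y(L-s)-y(s)\big)$ are both correct. The gap is precisely the step you flag as ``the only delicate point'' but never carry out, and your diagnosis of which hypothesis resolves it is off.

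Two things are being conflated. The identity $\theta(L-s)=-\theta(s)$ is \emph{automatic} once you base the angle at the midpoint, $\theta(s):=\int_{L/2}^s k$, by the symmetry $k(L-s)=k(s)$; no endpoint information is needed for that. What is \emph{not} automatic is that $\gamma'(s)=(\cos\theta(s),\sin\theta(s))$: in general $\gamma'(s)=Q(\cos\theta(s),\sin\theta(s))^T$ for some $Q\in SO(2)$ determined by $\gamma'(L/2)$, and the lemma amounts to showing $Q=\pm I$. This is where the paper uses the endpoints: since $\sin\theta$ is odd about $L/2$ one computes $\gamma(L)-\gamma(0)=Q\,(I_0,0)^T$ with $I_0=\int_0^L\cos\theta$, and the condition $\gamma(L)-\gamma(0)=(l,0)$ with $l\neq 0$ forces $I_0\neq 0$ and hence $\sin q=0$. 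The crucial input is $l\neq 0$, not merely $y(L)=y(0)$; indeed the equation $y(L)-y(0)=\sin q\cdot I_0$ gives no information by itself, and if $l=0$ the conclusion can genuinely fail (the reflection axis need not be vertical, cf.\ Remark~\ref{rem:minimality_local}). Once $\sin q=0$ is in hand, your integration argument goes through verbatim and matches the paper's proof.
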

%%%%%%%%%%%%%%%%%%%%%%%%%%%%%%%%%%%%%%

\begin{proof}
    This easily follows from elementary differential geometry with the fact that
    \begin{align} \label{eq:1024-1}
        k(s)  = \Phi(s) =  - \Phi(s-L) = \Phi(L-s) = k(L-s)\quad \text{for any} \ \ s\in[0, L]. 
    \end{align}
    The assumption $l\neq0$ is used for forcing the reflection axis to be vertical.
\end{proof}

\if0
\begin{proof}
By Definitions~\ref{def:well-periodic} and \ref{def:m-fold}, we have $k(0)=k(L)=0$ while $k\neq 0$ on $(0, L)$, and also $k(s)=\Phi(s)$ holds for an odd, antiperiodic function $\Phi:\R\to\R$ with antiperiod $L$.
In particular,
\begin{align} \label{eq:1024-1}
k(s)  = \Phi(s) =  - \Phi(s-L) = \Phi(L-s) = k(L-s)\quad \text{for any} \ \ s\in[0, L]. 
\end{align}
Next, set 
\[
\theta(s):=\int_{\frac{L}{2}}^s k(u)\,du \quad \text{for} \ \ s\in[0, L]. 
\]
Then it follows from \eqref{eq:1024-1} that
\begin{align}\label{eq:1024-2}
\theta(L-s)
=\int_{\frac{L}{2}}^{L-s} k(u)\,du
=\int_{\frac{L}{2}}^{s} k(L-u) (-1) du
=-\theta(s)   
\end{align}
for $s\in[0, L]$.
We recall that $\gamma$ can be given by 
\begin{align}\label{eq:1024-3}
\gamma(s)= \gamma(\tfrac{L}{2}) + Q\int_{\frac{L}{2}}^s
\begin{pmatrix}
\cos\theta(t)  \\
\sin\theta(t)  \\
\end{pmatrix}
dt, \quad s\in[0,L],
\end{align}
where $Q$ is a rotation matrix given by some $q\in[0, 2\pi)$ such that
\[
\gamma'(\tfrac{L}{2}) = Q
\begin{pmatrix}
1  \\
0 \\
\end{pmatrix}
, \quad 
Q :=
\begin{pmatrix}
\cos q & -\sin q  \\
\sin q & \cos q \\
\end{pmatrix}.
\]
Then we infer from \eqref{eq:1024-2} and \eqref{eq:1024-3} that for any $s\in[0,L]$
\begin{align} \label{eq:1024-5}
\begin{split}
\gamma(L-s)&= \gamma(\tfrac{L}{2}) + Q\int_{\frac{L}{2}}^{L-s}
\begin{pmatrix}
\cos\theta(t)  \\
\sin\theta(t)  \\
\end{pmatrix}
dt \\
&= \gamma(\tfrac{L}{2}) - Q\int_{\frac{L}{2}}^s
\begin{pmatrix}
\cos\theta(L-u)  \\
\sin\theta(L-u)  \\
\end{pmatrix}
du \\
&= \gamma(\tfrac{L}{2}) - Q\int_{\frac{L}{2}}^s
\begin{pmatrix}
\cos\theta(t)  \\
-\sin\theta(t)  \\
\end{pmatrix}
dt, 
\end{split}
\end{align}
which in combination with \eqref{eq:1024-3} yields 
\begin{align} \label{eq:1024-4}
\gamma(L-s) - \gamma(s)&= -Q \int_{\frac{L}{2}}^s
\begin{pmatrix}
2 \cos\theta(t)  \\
0 \\
\end{pmatrix}
dt
=I(s)\begin{pmatrix}
\cos q  \\
\sin q \\
\end{pmatrix}
,
\end{align}
where $I(s):=-\int_{{L}/{2}}^s 2 \cos\theta(t) dt$.
Combining this with $\gamma(0)=(0, 0)$ and $\gamma(L)=(l, 0)$, we see that
\[
I(0) \cos q =l,  \quad  I(0) \sin q =0.
\]
This together with $l \neq 0$ implies that $I(0)\neq0$, and then $\sin q =0$ follows.
Therefore, we infer from \eqref{eq:1024-4} that $y(L-s)-y(s)=0$.
On the other hand, it also follows by \eqref{eq:1024-5} and \eqref{eq:1024-4} that
\begin{align*}
\gamma(L-s) + \gamma(s) &= 2\gamma(\tfrac{L}{2}) + Q\int_{\frac{L}{2}}^s
\begin{pmatrix}
0 \\
2\sin\theta(t)  \\
\end{pmatrix}
dt.
\end{align*}
Recalling $\sin q =0$, we infer from the above that $x(L-s) + x(s) = 2 x(\tfrac{L}{2})$ and in addition this equals to $l$ by taking $s=0$, completing the proof.
\end{proof}
\fi

In addition, again for notational simplicity, we also assume that
$$f(0)=0$$
by replacing $f(t)$ with $f(t)-f(0)$ if necessary; this does not lose generality since the functional $\int_\gamma f(|k|)\,ds$ is (locally) minimized if and only if so does $\int_\gamma (f(|k|)-f(0) )\,ds$ under the fixed-length constraint.
Note that if $f$ satisfies \eqref{H3} and $f(0)=0$, then one easily verifies that
\begin{align}\label{eq:1007-1}
\text{$\lambda f(\lambda^{-1} t)  < f(t)$ for any $t\in (0,\infty)$ and $\lambda>1$,}
\end{align}
since $f((1-\lambda^{-1})0+\lambda^{-1}t)<(1-\lambda^{-1})f(0)+\lambda^{-1}f(t)$.
%In fact, we infer from $f\geq 0$ that \eqref{H3} is equivalent to \eqref{eq:1007-1}. 

In what follows we will construct an energy-decreasing perturbation.
The key idea here is to perform an odd extension of the loop and shift the domain so that the symmetric loop is asymmetrically perturbed as in Figure \ref{fig_5} (3), and finally rescale the loop as in Figure \ref{fig_5} (4) in order to increase the shortened length and recover the admissibility.
All these procedures decrease the energy.
The convexity hypothesis will be used in the last rescaling step.

\begin{figure}[htbp]
\centering
\includegraphics[width=12cm]{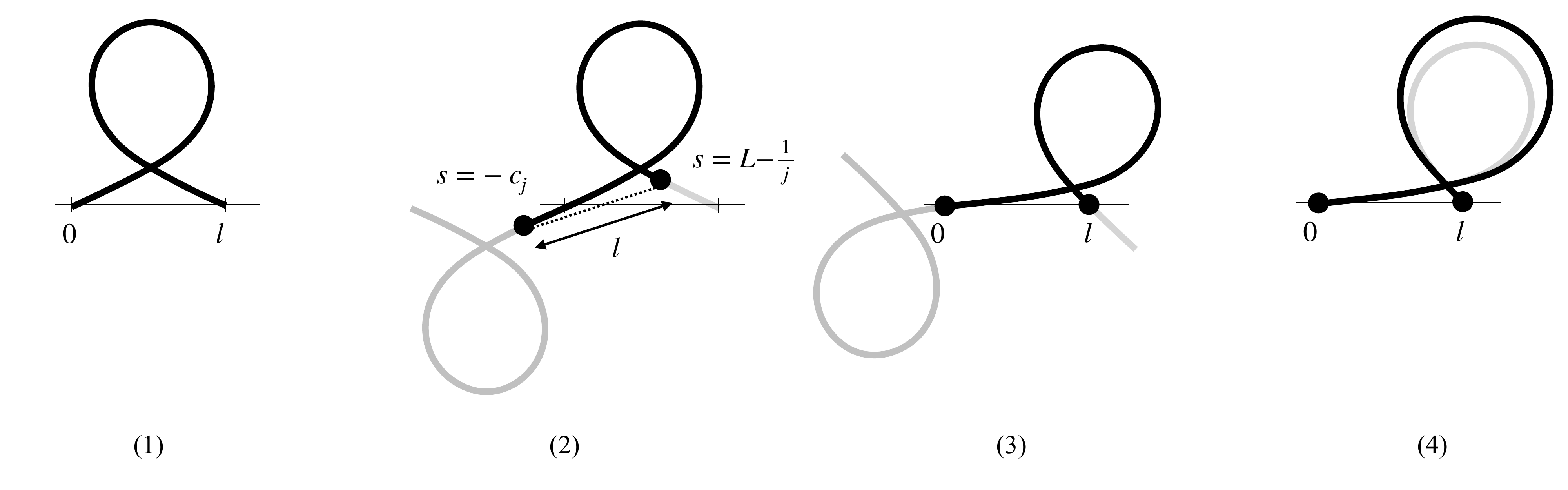} 
\caption{
(1) An original curve with a loop.
(2) Shifting the endpoints.
(3) Rigid motion for the boundary condition.
(4) Rescaling the loop for the length constraint.
}
\label{fig_5}
\end{figure}

\begin{proof}[Proof of Theorem \ref{thm:pinned1*}]
We first note that by assumption \eqref{eq:endpoints_xaxis} with $l>0$ we have the reflection symmetry in Lemma \ref{lem:curve-sym}, and in addition by $\gamma'(0)\cdot(P_1-P_0)>0$ we have
\begin{equation}\label{eq:0901-4}
    x'(0)>0.
\end{equation}
We also note that the non-injectivity assumption precisely means that
\begin{align}
    \text{there are distinct $a,b \in (0,L)$ such that $\gamma(a)=\gamma(b)$.} \label{eq:0901-2} 
\end{align}
In the proof below, we shall construct a family of curves 
$\{\gamma_j\}_{j\geq j_0} \subset \Ap$ satisfying 
\begin{align} \label{eq:0902-4}
\F(\gamma_j) < \F(\gamma) \ \ \text{for any} \ \ j\geq j_0 \quad \text{and} \quad  \gamma_j \to \gamma \ \ \text{in}\ \ W^{2,p}(0,L;\R^2).
\end{align}

First, we define an odd extension of $\gamma$ to the domain $[-L, L]$ by
\begin{align} \label{eq:0902-1}
\begin{cases}
\gamma(s) \quad &s \in [0, L], \\
-\gamma(-s) \quad &s \in [-L, 0), 
\end{cases}
\end{align}
which is also denoted by $\gamma: [-L, L] \to \R^2$.
Then $\gamma \in C^1([- L, L];\R^2)$ holds since $\gamma$ is of class $C^1$ around $s=0$.
For $j\geq \frac{1}{L}$, we have $y(\tfrac{1}{j})=y(L-\tfrac{1}{j})$ by Lemma \ref{lem:curve-sym}.
In addition, since $x'(0)>0$ by \eqref{eq:0901-4} and also $x'(L)>0$ by Lemma~\ref{lem:curve-sym}, for all large $j$ we have $x(L-\tfrac{1}{j})-x(\tfrac{1}{j})<x(L)-x(0)=l$ so that
\begin{align} \label{eq:1007-2}
 \big| \gamma(L- \tfrac{1}{j}) - \gamma(0) \big| <l.
 \end{align}
On the other hand, we infer from Lemma~\ref{lem:curve-sym} that 
\[
x(L-\tfrac{1}{j}) - x(-\tfrac{1}{j}) = x(L-\tfrac{1}{j}) + x(\tfrac{1}{j}) = l,
\quad 
y(L-\tfrac{1}{j}) = y(\tfrac{1}{j}),
\]
and since $|k(s)|\neq0$ for any small $s>0$, we have $y(\tfrac{1}{j})\neq0$ for all large $j$.
Hence
\begin{align}\label{eq:1013-1}
 \big| \gamma(L- \tfrac{1}{j}) - \gamma(-\tfrac{1}{j}) \big|  = \sqrt{l^2+4y(\tfrac{1}{j}) ^2} >l
\end{align}
for all $j\geq j_0$ with sufficiently large $j_0$.
By \eqref{eq:1007-2} and \eqref{eq:1013-1}, for each $j\geq j_0$ there is 
\begin{align} \label{eq:0508-1}
c_j \in (-\tfrac{1}{j}, \tfrac{1}{j}) \quad \text{such that}\quad \left| \gamma(L-\tfrac{1}{j}) - \gamma(-c_j) \right| = l.
\end{align}
(In fact we can show $c_j>0$ but this is not used.)
Define $\bar{\gamma}_j : [-c_j, L-\frac{1}{j}] \to \R^2$ by 
\[
\bar{\gamma}_j (u):= Q_j \Big( \gamma(u) - \gamma(-c_j) \Big), \quad u\in [-c_j, L-\tfrac{1}{j}] ,
\]
where $ Q_j $ is 
a rotation matrix such that $Q_j (\gamma(L-\frac{1}{j})- \gamma(-c_j)) = (l,0)$
and $Q_j\to \mathrm{Id}$ (by \eqref{eq:0508-1}, such $ Q_j $ exists).
Then we see that
\begin{align}\label{eq:0901-5}
\bar{\gamma}_j(-c_j)=(0,0), \quad \bar{\gamma}_j(L-\tfrac{1}{j})=(l,0), \quad  \bar{\gamma}_j \in W^{2,p}(-c_j, L-\tfrac{1}{j};\R^2)
\end{align}
but we still have $\mathcal{L}[\bar{\gamma}_j]<L$.
Now we normalize the length.
Let $a, b \in (0,L)$ satisfy \eqref{eq:0901-2}.
Then $\bar{\gamma}_j(a)= \bar{\gamma}_j(b)$ also holds for all $j\geq j_0$ with $j_0$ so large that $\tfrac{1}{j_0}< a<b<L-\tfrac{1}{j_0}$.
For $j\geq j_0$ we define a continuous function $\bar{\Gamma}_j : [-c_j, L-\tfrac{1}{j}] \to \R^2$ by
\begin{align*}
\bar{\Gamma}_j:= 
\gamma_j |_{[-c_j,a]} \oplus \lambda_j \gamma_j |_{[a,b]} \oplus \gamma_j |_{[b, L-\frac{1}{j}]}, \quad \lambda_j := \frac{b-a+\frac{1}{j}-c_j}{b-a} >1.
\end{align*}
Note that $\mathcal{L} [\bar{\Gamma}_j] =L$ follows by the choice of the scaling factor $\lambda_j$.

Hereafter we let $\gamma_j$ denote the arclength parametrization of $\bar{\Gamma}_j$.
We have $\{\gamma_j\}_{j\geq j_0}  \subset C([0,L];\R^2)$ by definition.
Since $\bar{\Gamma}_j$ is defined only by rescaling at a self-intersection point, we also have $\{\gamma_j\} \subset C^1([0,L];\R^2)$.
We also notice that the signed curvature $k_j$ of $\gamma_j$ is given by
\begin{align} \label{eq:0502-1}
k_j(s)= 
\begin{cases}
k(s-c_j), & 0 \leq s <  a+c_j,  \\
\lambda_j^{-1}k(\lambda_j^{-1} (s-a-c_j) +a) , \ \ & a+c_j< s < b+\frac{1}{j}, \\ 
k(s-\frac{1}{j}), &b +\frac{1}{j} < u \leq L,
\end{cases}
\end{align}
where $k$ is the signed curvature of $\gamma$.
Now we check that $\{\gamma_j \}_{j\geq j_0} \subset \Ap$.
We have $k\in C([0,L])$ by Definition~\ref{def:well-periodic}, and hence by \eqref{eq:0502-1} we get $\{\gamma_j\}_{j\geq j_0} \subset W^{2,\infty}(0,L;\R^2) \subset W^{2,p}(0,L;\R^2)$.
Moreover, $ \gamma_j(0)=(0,0)$ and $ \gamma_j(L)=(l,0)$ by definition and \eqref{eq:0901-5}.
Recalling $\mathcal{L}[\gamma_j]=L$, we have $\{\gamma_j \}_{j\geq j_0} \subset \Ap$.

Henceforth we show that the family $\{\gamma_j\}_{j\geq j_0}$ satisfies \eqref{eq:0902-4}.
We first show the strict inequality in \eqref{eq:0902-4}.
It follows from \eqref{eq:0502-1} and the change of variables that 
\begin{align} \label{eq:0902-2}
\begin{split}
\F(\gamma_j) 
&= \int_0^{a+c_j} f\big( |k_j(s) | \big) \,ds + \int_{a+c_j}^{b+\frac{1}{j}} f\big( |k_j(s) | \big) \,ds 
+ \int_{b+\frac{1}{j}}^L f\big( |k_j(s) | \big) \,ds \\
&= \int_{-c_j}^{a} f\big( |k(s) | \big) \, ds
+ \int_{a}^{b} f\big( \lambda_j^{-1} |k(s) | \big) \lambda_j \, ds
+\int_{b}^{L-\frac{1}{j}} f\big( |k(s) | \big) \,ds.
\end{split}
\end{align}
By \eqref{eq:1007-1} and $\lambda_j>1$, and the fact that $k\neq0$ in $(0,L)$ since $\gamma$ is $\frac{1}{2}$-fold, we obtain
\begin{align*}
 \int_{a}^{b} f\big( \lambda_j^{-1} |k(s) | \big) \lambda_j \, ds
<  \int_{a}^{b} f\big( |k(s) | \big)\, ds.
\end{align*}
By \eqref{eq:0902-1} we have $|k(s)|=|k(-s)|$ for $s\in[-\tfrac{1}{j},\tfrac{1}{j}]$, and hence by $|c_j|<\tfrac{1}{j}$ and $f\geq0$,
\begin{align*}
\int_{-c_j}^{a} f\big( |k(s) | \big) \,ds
&\leq \int_{0}^{|c_j|} f\big( |k(s) | \big) \,ds + \int_{0}^{a} f\big( |k(s) | \big) \,ds \\
&\leq \int_{0}^{\frac{1}{j} } f\big( |k(s) | \big) \,ds + \int_{0}^{a} f\big( |k(s) | \big) \,ds \\
&= \int_{L-\frac{1}{j} }^{L} f\big( |k(s) | \big) \,ds + \int_{0}^{a} f\big( |k(s) | \big) \,ds,
\end{align*}
where in the last equality we used $k(L-s)=k(s)$, cf.\ \eqref{eq:1024-1}.
Therefore, by \eqref{eq:0902-2},
\begin{align*} 
\begin{split}
\F(\gamma_j) 
&< \int_{0}^{a} f\big( |k(s) | \big) \, ds
+ \int_{a}^{b} f\big( |k(s) | \big) \, ds
+\int_{b}^{L} f\big( |k(s) | \big) \,ds 
=\F(\gamma). 
\end{split}
\end{align*}
It remains to show that $\gamma_j \to \gamma$ in $W^{2,p}(0,L;\R^2)$.
By \eqref{eq:0502-1} and the fact that $\lambda_j \to 1$, $c_j \to 0$, we see that $k_j \to k$ a.e.\ in $(0,L)$.
In addition, since $\|k_j\|_{L^\infty} \leq \|k\|_{L^\infty}$ holds by $\lambda_j >1$, it follows that $k_j \to k$ in $L^p(0,L)$.
Noting also the fact that $\gamma_j(0)=\gamma(0)$ and $\gamma_j'(0)=Q_j\gamma'(\frac{1}{j})\to\gamma'(0)$, we obtain $\gamma_j \to \gamma$ in $W^{2,p}(0,L;\R^2)$.
The proof is complete.
\end{proof}

%%%%%%%%%%%%%%%%%%%%%%%%%%%%%%%%%%%%%%
\begin{remark}\label{rem:minimality_local}
Theorem~\ref{thm:pinned1*} will be used for showing instability of a $(p,r,1)$-loop with $r:=\tfrac{|P_0-P_1|}{L}>0$, but does not cover the case of $r=0$.
In fact, if $r=0$, then we can regard a $(p,r,1)$-loop as a half-fold figure-eight $p$-elastica, which is a global minimizer of $\mathcal{B}_p$ in $\Ap$ \cite[Theorem 1.4]{MYarXiv2209} and hence obviously stable.
Therefore it is necessary to assume that $P_0\neq P_1$ at least.
\end{remark} 
%%%%%%%%%%%%%%%%%%%%%%%%%%%%%%%%%%%%%%

\section{Planar $p$-elasticae}\label{sect:p-elastica}

In this section we discuss the stability of closed and pinned planar $p$-elasticae.

\subsection{Closed $p$-elastica}\label{closed-elastica}

We first apply Theorem \ref{thm:planar_local} to prove Theorem \ref{thm:classify-closed-pela}, thus classifying the stability of closed planar $p$-elasticae.

Let $\Aclosed$ denote the set $\Ac$ in the special case that $P_0=P_1$ and $V_0=V_1$.
Recall from \cite[Theorem 5.6]{MYarXiv2203} that any closed planar $p$-elastica is either a circle or a figure-eight $p$-elastica, possibly multiply covered.

To begin with, we observe that an $m$-fold circle and a $1$-fold figure-eight $p$-elastica (in the sense of \cite{MYarXiv2203}) are indeed stable: 

%%%%%%%%%%%%%%%%%%%%%%%%%%%%%%%%%%%%%%
\begin{proposition} \label{prop:closed-stable} 
If $\gamma$ is an $m$-fold circle, where $m\in\mathbf{N}$, or a $1$-fold figure-eight $p$-elastica, then $\gamma$ is a local minimizer of $\mathcal{B}_p$ in $\Aclosed$.
\end{proposition}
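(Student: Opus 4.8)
The plan is to show directly that an $m$-fold circle and a $1$-fold figure-eight $p$-elastica are local minimizers of $\mathcal{B}_p$ among closed curves of fixed length, working in the $W^{2,p}$-topology on $\Aclosed$. For the $m$-fold circle the key observation is that the circle is a \emph{global} minimizer of $\mathcal{B}_p$ within its $C^1$-regular homotopy class (rotation number $m$): by Hölder's inequality one has $\mathcal{B}_p[\gamma] \ge L^{1-p}\left(\int_0^L |k|\,ds\right)^p \ge L^{1-p}(2\pi|m|)^p$, where the last step uses Fenchel-type total curvature bound $\int |k|\,ds \ge 2\pi|n(\gamma)|$ and the fact that the rotation number $n(\gamma)$ is locally constant in the $C^1$-topology. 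Since equality in Hölder forces $|k|$ constant and the circle achieves the bound, the $m$-fold circle minimizes $\mathcal{B}_p$ in a whole $W^{2,p}$-neighborhood (indeed in the entire connected component of $\Aclosed$ determined by the rotation number), hence is a local minimizer. I would first carry out this circle case as it is clean and self-contained.

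For the $1$-fold figure-eight $p$-elastica the rotation number is $0$, so the above lower bound is vacuous and a genuinely local argument is needed. The strategy is to reduce the stability question to a one-dimensional problem by exploiting the symmetry of the figure-eight. A figure-eight $p$-elastica consists of two congruent loops meeting at the self-intersection point with a symmetry (rotation by $\pi$ about the node, and a reflection). One would set up a reduced functional on the space of competitors respecting the fixed length and the periodicity/closure conditions, and show that the second variation (or, more robustly, a direct comparison estimate) is nonnegative in a neighborhood. Concretely I would parametrize nearby closed curves by their curvature function $k \in L^p$ subject to the closure constraints $\int e^{i\Theta}\,ds = 0$ and the length normalization, expand $\mathcal{B}_p$ to second order, and argue that the constrained Hessian of $\int |k|^p$ at the figure-eight curvature profile is positive semidefinite on the admissible variations. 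Here the non-quadraticity of $|k|^p$ for $p \neq 2$ must be handled: since the figure-eight curvature vanishes only at isolated points, near those zeros $|k|^p$ is not twice differentiable, so I expect to need either a careful blow-up/Taylor analysis near the inflection points or an a priori argument that energy-minimizing variations keep $k$ bounded away from pathological regimes — this is the place where the paper's general machinery (or the cited classification in \cite{MYarXiv2203}) is likely to be invoked rather than re-proved.

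The main obstacle I anticipate is precisely this figure-eight case: unlike the contradiction-based instability arguments in the rest of the paper, establishing \emph{stability} cannot be done by the cut-and-paste trick and seems to require either a genuine second-variation computation adapted to the $p$-bending energy, or an appeal to a known minimality result for the figure-eight. I would therefore expect the actual proof to cite \cite[Theorem 1.3]{MYarXiv2209} or an analogous result identifying the $1$-fold figure-eight as a \emph{global} minimizer of $\mathcal{B}_p$ in an appropriate class (e.g.\ among closed curves with rotation number $0$, or among pinned curves with coincident endpoints as hinted in Remark~\ref{rem:minimality_local}), which would immediately give local minimality in $\Aclosed$ for free. If no such global statement is directly available, the fallback is the reduced-functional/second-variation route sketched above, with the delicate point being the behaviour of $|k|^p$ near the curvature zeros.
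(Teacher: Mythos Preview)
Your circle argument is correct and in fact slightly more direct than the paper's: you use H\"older plus the rotation-number bound to get a sharp inequality achieved only by the $m$-fold circle, whereas the paper runs a unified direct-method-plus-classification argument for both cases. Both rely on the same key observation that the rotation number is $W^{2,p}$-locally constant, so that a global minimizer in the class $Z_m:=\{\gamma\in\Aclosed:\mathcal{N}[\gamma]=m\}$ is automatically a local minimizer in $\Aclosed$.

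For the figure-eight your second-variation route is a detour the paper does not take, and as you yourself note, the non-smoothness of $|k|^p$ at the inflection points would make it genuinely awkward. The paper instead goes straight to what you anticipate only as a fallback: it shows the $1$-fold figure-eight is the global minimizer in $Z_0$. The piece you are missing is how short this becomes once the classification of closed $p$-elasticae from \cite{MYarXiv2203} is available. The direct method gives a minimizer $\bar\gamma_0$ in $Z_0$; by Lagrange multipliers $\bar\gamma_0$ is a closed $p$-elastica; by the classification every closed planar $p$-elastica is a circle or an $n$-fold figure-eight, and rotation number $0$ rules out the circles; comparing the energies of the $n$-fold figure-eights then picks out $n=1$. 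No second variation is needed anywhere, and no separate citation beyond the classification theorem is required.
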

%%%%%%%%%%%%%%%%%%%%%%%%%%%%%%%%%%%%%%
\begin{proof}
For $m\in \mathbf{N}\cup\{0\}$, let $Z_m\subset \Aclosed$ denote the subset of fixed rotation number $m$:
\[
Z_m:= \Set{\gamma \in \Aclosed | \mathcal{N}[\gamma]:=\tfrac{1}{2\pi} \textstyle \int_0^{L}k\,ds =m  }. 
\]
Since the functional $\mathcal{N}$ is constant in a small $W^{2,p}$-neighborhood of any element of $\Aclosed$, if $\gamma$ is a minimizer of $\mathcal{B}_p$ in $Z_m$, then $\gamma$ is a local minimizer in $\Aclosed$.

It suffices to show that an $m$-fold circle (resp.\ a 1-fold figure-eight $p$-elastica) is a minimizer of $\mathcal{B}_p$ in $Z_m$ if $m\geq1$ (resp.\ $m=0$).
The existence of a minimizer $\bar{\gamma}_m$ in $Z_m$ follows from the standard direct method (cf.\ \cite[Proposition 4.1]{MYarXiv2209}).  
Then, $\bar{\gamma}_m$ is also a $p$-elastica by the Lagrange multiplier method \cite{MYarXiv2209}. 
On the other hand, by the classification for closed $p$-elasticae \cite[Theorem 5.6]{MYarXiv2203}, in the case of $m\geq1$, $\bar{\gamma}_m$ must be an $m$-fold circle.
In the remaining case of $m=0$, the same classification also implies that any $p$-elastica with rotation number $0$ is an $n$-fold figure-eight $p$-elastica for $n\in \N$, and comparing their $p$-bending energy, we find that $\bar{\gamma}_0$ must be a $1$-fold figure-eight $p$-elastica.
\end{proof}

%%%%%%%%%%%%%%%%%%%%%%%%%%%%%%%%%%%%%%
\begin{remark} \label{rem:optimal-local}
Here is a good position to observe that Theorem \ref{thm:planar_local} is optimal in the sense that if $s_3-s_1=L$ then the assertion may fail. 
In fact, a $1$-fold figure-eight $p$-elastica is a local minimizer of $\mathcal{F}=\mathcal{B}_p$ in $\Aclosed$ by the above proposition, while it is well-periodic and satisfies \eqref{Yeq:0417.2} at $s=0,\frac{L}{2},L$ if the endpoints are arranged to be located at the crossing of the figure-eight (inflection points).
\end{remark}
%%%%%%%%%%%%%%%%%%%%%%%%%%%%%%%%%%%%%%

We are now ready to classify stable $p$-elasticae among closed curves.

%%%%%%%%%%%%%%%%%%%%%%%%%%%%%%%%%%%%%%
\begin{proof}[Proof of Theorem~\ref{thm:classify-closed-pela}]
In view of \cite[Theorem 5.6]{MYarXiv2203} and Proposition~\ref{prop:closed-stable} it suffices to prove  instability of $m$-fold figure-eight $p$-elasticae for $m\geq2$.
If $m\geq2$, then $m$-fold figure-eight $p$-elasticae are well-periodic and have at least three inflection points satisfying \eqref{Yeq:0417.2}, and hence they are unstable in $\Aclosed$ by Theorem~\ref{thm:planar_local}.
\end{proof}
%%%%%%%%%%%%%%%%%%%%%%%%%%%%%%%%%%%%%%

%%%%%%%%%%%%%%%%%%%%%%%%%%%%%%%%%%%%%%
\begin{proof}[Proof of Corollary~\ref{cor:LangerSinger-p}]
As discussed in the proof of Proposition~\ref{prop:closed-stable}, if $m\geq1$, then any global minimizer with rotation number $m$ is an $m$-fold circle, which is stable in $\Aclosed$. 
If $m=0$, then it follows from Theorem~\ref{thm:classify-closed-pela} that any stable zero-rotation-number planar closed curve is a $1$-fold figure-eight $p$-elastica.
\end{proof}
%%%%%%%%%%%%%%%%%%%%%%%%%%%%%%%%%%%%%%

\subsection{Wavelike pinned $p$-elastica}

Next we prove Theorem~\ref{thm:wavelike_unique} and Corollary \ref{cor:pinned_unique}, which are now almost direct consequences of Theorems~\ref{thm:planar_local}, \ref{thm:pinned2}, and \ref{thm:pinned1*}, combined with our previous results.

\begin{proof}[Proof of Theorem~\ref{thm:wavelike_unique}]
Let $\gamma$ be a wavelike pinned $p$-elastica and let $r:=\frac{|P_0-P_1|}{L}$. 
Then it follows from \cite[Theorem 1.1]{MYarXiv2209} that there exists $n\in \mathbf{N}$ such that $\gamma$ is either a $(p,r,n)$-arc or a $(p,r,n)$-loop, for which we write $\gamma^n_{\rm arc}$ or $\gamma^n_{\rm loop}$, respectively.
By \cite[Theorem 1.7]{MYarXiv2203}, every pinned $p$-elastica is of class $C^2$, and by \cite[Theorems 1.2 and 1.3]{MYarXiv2203},
the signed curvature of a wavelike $p$-elastica can be expressed by the so-called $p$-elliptic function $\cn_p$, which is an odd, antiperiodic, and continuous function. 
Therefore, $\gamma^n_{\rm arc}$ and $\gamma^n_{\rm loop}$ are well-periodic curves. 
Furthermore, 
\begin{itemize}
\item $\gamma_{\rm arc}^n$ and $\gamma_{\rm loop}^n$ ($n\geq 3$) are $\frac{n}{2}$-fold well-periodic curves and contain three inflection points satisfying \eqref{eq:lem0517},
\item $\gamma_{\rm arc}^2$ and $\gamma_{\rm loop}^2$ are $1$-fold well-periodic curves,
\item $\gamma_{\rm loop}^1$ is a $\frac{1}{2}$-fold well-periodic curve satisfying  \eqref{eq:0901-4} and \eqref{eq:0901-2} if $|P_0-P_1|>0$ (cf.\ \cite[Lemma 3.8]{MYarXiv2209} for existence of a loop).
\end{itemize}
Hence, $\gamma_{\rm arc}^n$ and $\gamma_{\rm loop}^n$ with $n\geq3$ are unstable by Theorem~\ref{thm:planar_local} with Remark~\ref{rem:M0503}, while so are $\gamma_{\rm arc}^2$ and $\gamma_{\rm loop}^2$ by Theorem~\ref{thm:pinned2}.
Moreover, if $|P_0-P_1|>0$, then $\gamma_{\rm loop}^1$ is unstable by Theorem \ref{thm:pinned1*}.
Therefore, in any case, the only remaining candidate for stable wavelike pinned $p$-elasticae is the global minimizer $\gamma_{\rm arc}^1$.
The proof is now complete.
\end{proof}

\begin{proof}[Proof of Corollary~\ref{cor:pinned_unique}]
If $p\in(1,2]$ or $p\in(2,\infty)$ and $\frac{|P_0-P_1|}{L}<\frac{1}{p-1}$, then any pinned $p$-elastica is a wavelike $p$-elastica (cf.\ \cite[Theorem 1.1]{MYarXiv2209}).
This together with Theorem~\ref{thm:wavelike_unique} asserts that any stable wavelike pinned $p$-elastica is a $(p,r,1)$-arc.
\end{proof}

\subsection{Flat-core $p$-elastica}\label{sect:flat}

Now we discuss some instability criteria for flat-core $p$-elasticae, and prove Theorem \ref{thm:unstable-flatcore}.
Flat-core $p$-elasticae appear as special examples of planar $p$-elasticae, and they are obtained by concatenating certain `loops' and `segments' with some arbitrariness (see \cite{nabe14, MYarXiv2203}).
In view of the natural boundary condition induced by the pinned boundary condition, we focus on flat-core $p$-elasticae each of whose endpoints is an endpoint of a segment or of a loop.
More precisely, we consider a curve $\gamma:[0,L]\to\R^2$ represented by, up to similarity and reparametrization, 
\begin{align}\tag{F1}\label{eq:F1}
\gamma 
= \gamma^{L_0}_{\rm seg} \oplus \bigg(\bigoplus_{j=1}^N\gamma^{\sigma_j}_{\rm loop} \oplus \gamma^{L_j}_{\rm seg} \bigg)
\end{align}
for some $N\in \N$, $\{\sigma_j\}_{j=1}^N\subset \{+, -\}$, and $L_1, \ldots, L_N\geq0$.
Here $\gamma_{\rm seg}^{L_j}(s):=(s,0)$ for $s\in[0,L_j]$, $\gamma_{\rm loop}^+$ denotes a certain loop, and $\gamma_{\rm loop}^-$ denotes the reflection of $\gamma_{\rm loop}^+$ with respect to the $e_1$-axis.
Although the explicit parametrization of $\gamma_{\rm loop}^+$ is known, here we only use the following special properties: the curve $\gamma_{\rm loop}^+=(X,Y):[0,1]\to\R^2$ is an arclength parametrized curve  such that the tangential vectors at both the endpoints are rightward, i.e.,
\begin{align}
    (\gamma_{\rm loop}^+)'(0)= (\gamma_{\rm loop}^+)'(1) = (1,0),  \tag{F2}\label{eq:F2}
\end{align}
the signed curvature $k_{\rm loop}$ of $\gamma_{\rm loop}^+$ satisfies
\begin{align}
    k_{\rm loop}>0 \ \ \text{in} \ \ (0,1), \quad k_{\rm loop}(0)=k_{\rm loop}(1)=0,  \tag{F3}\label{eq:F3}
\end{align}
and $\gamma_{\rm loop}^+$ is reflectionally symmetric in the sense that
\begin{align}
    X(s)+X(1-s)=2X(\tfrac{1}{2}), \ \ Y(s)=Y(1-s) \ \ \text{for} \ \ s\in[0,1]. \tag{F4}\label{eq:F4}
\end{align}
If $\gamma$ is a pinned planar flat-core $p$-elastica, then these properties directly follow by our previous classification \cite[Theorem 1.1]{MYarXiv2209} and explicit formulae \cite[Theorem 1.3]{MYarXiv2203} (see also Figure~\ref{fig:flat-cores}).
In addition, combining \eqref{eq:F1} with \eqref{eq:F2}, we see that any curve $\gamma$ of the form \eqref{eq:F1} satisfies 
\begin{align}\label{eq:ConditionFromF12}
\gamma'(0)=\gamma'(L)=(1,0).
\end{align}

The first criterion ensures that all loops need to lie `strictly inside' for stability under the pinned boundary condition.

%%%%%%%%%%%%%%%%%%%%%%%%%%%%%%%%%%%%%%
\begin{proposition}\label{prop:flat-unstable-pin}
Let $\gamma\in \Ap$ be a flat-core $p$-elastica. 
If a loop touches an endpoint, 
then $\gamma$ is not a local minimizer of $\mathcal{B}_p$ in $\Ap$.
\end{proposition}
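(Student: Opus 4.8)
The plan is to build, exactly as in the proof of Theorem~\ref{thm:pinned1*}, an energy‑non‑increasing family $\{\gamma_j\}_{j\ge j_0}\subset\Ap$ converging to $\gamma$ in $W^{2,p}$ but violating the regularity/natural‑boundary conclusion of the hypotheses invoked in \cite{MYarXiv2203,MYarXiv2209}; the contradiction then comes from the abstract scheme in Lemma~\ref{thm:Magic} (or its pinned analogue \eqref{seq:pin}). Concretely, write $\gamma$ in the normal form \eqref{eq:F1}. "A loop touches an endpoint'' means, after the reduction \eqref{eq:ConditionFromF12}, that $L_0=0$ (the left endpoint $P_0$ is the starting point of $\gamma_{\rm loop}^{\sigma_1}$) or $L_N=0$; by reversing the parameterization we may assume $L_0=0$, so $\gamma$ begins with a loop $\gamma_{\rm loop}^{\sigma_1}$ emanating from $P_0$ with $\gamma'(0)=(1,0)$.

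First I would exploit the reflection symmetry \eqref{eq:F4} of the first loop together with the fact that, by \eqref{eq:F3}, its curvature vanishes to infinite accuracy only at the two endpoints of the loop parameter while being strictly signed inside. This is exactly the local picture of a $\tfrac12$‑fold well‑periodic curve near a boundary loop that touches an endpoint, so the odd‑extension‑and‑shift construction of Theorem~\ref{thm:pinned1*} applies verbatim to the first loop: extend $\gamma$ oddly across $s=0$ using \eqref{eq:F3}–\eqref{eq:F4} (the odd extension is $C^1$ because $\gamma'(0)=(1,0)$ by \eqref{eq:ConditionFromF12}), cut at $-c_j$ and at a point $L-\tfrac1j$ chosen so that the chord has the right length $l$ (here one must treat the degenerate case $P_0=P_1$, i.e.\ $l=0$, separately — see below), rotate by $Q_j\to\mathrm{Id}$ to restore the pinned data, and finally rescale a small loop at an interior self‑intersection of $\gamma$ (a flat‑core $p$‑elastica containing a loop always has one, giving the non‑injectivity \eqref{eq:0901-2}) to restore the length constraint. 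The convexity‑free version of the argument even works here because along the \emph{flat} segments of $\gamma$ the curvature is zero and $f(0)=0$, so the energy contribution of the shift of the domain is exactly zero rather than merely controlled; the only genuine energy change is the rescaling of the interior loop, which strictly decreases $\B_p$ since $p>1$.

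I would carry out the steps in this order: (1) reduce to $L_0=0$ and set up the odd extension of the first loop via \eqref{eq:F3}–\eqref{eq:F4}; (2) produce $c_j\in(-\tfrac1j,\tfrac1j)$ and the rotation $Q_j\to\mathrm{Id}$ realizing the correct chord, as in \eqref{eq:0508-1}; (3) normalize the length by rescaling at an interior self‑intersection, obtaining $\bar\Gamma_j$ and its arclength reparameterization $\gamma_j\in\Ap$; (4) verify $k_j\to k$ in $L^p$ and $\gamma_j\to\gamma$ in $W^{2,p}$ using $\lambda_j\to1$, $c_j\to0$, $Q_j\to\mathrm{Id}$; (5) verify $\B_p(\gamma_j)\le\B_p(\gamma)$ — in fact with strict inequality from the rescaled loop, since $\lambda f(\lambda^{-1}t)<f(t)$ for $t>0$, $\lambda>1$ with $f(t)=t^p$, which is strictly convex and vanishes at $0$; (6) apply Lemma~\ref{thm:Magic}/\eqref{seq:pin} with the regularity $C^2$ and natural boundary condition from \cite{MYarXiv2203,MYarXiv2209} to conclude $\gamma$ is not a local minimizer.

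The main obstacle, I expect, is the degenerate case $l=|P_0-P_1|=0$, where Theorem~\ref{thm:pinned1*} genuinely fails (cf.\ Remark~\ref{rem:minimality_local}), so the chord‑matching step (2) must be rethought: when $P_0=P_1$ one cannot shift to create a strictly longer chord, and one should instead exploit that a flat‑core curve with $L_0=0$ and $P_0=P_1$ has $N\ge2$ loops or a nontrivial segment elsewhere, and perform the odd extension across an \emph{interior} segment adjacent to the boundary loop rather than across $s=0$; alternatively one can first apply a length‑preserving cut‑and‑paste to reduce to $l>0$. A secondary technical point is checking the $C^1$‑matching of $\gamma_j$ at the rescaling junctions and at the reattachment point — but, as in the proof of Theorem~\ref{thm:pinned1*}, rescaling at a genuine self‑intersection where the tangent directions already coincide makes this automatic, and the odd extension is $C^1$ precisely because of \eqref{eq:ConditionFromF12}. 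Everything else is a routine transcription of the Theorem~\ref{thm:pinned1*} argument with the simplification that $f$ vanishes on the flat parts.
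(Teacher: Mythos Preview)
Your route follows Theorem~\ref{thm:pinned1*} (odd extension, chord matching, and rescaling), but the paper's proof is much closer to Theorem~\ref{thm:pinned2}: since \eqref{eq:ConditionFromF12} gives $\gamma'(0)=\gamma'(L)=(1,0)$, one can simply take the cyclic shift
\[
\gamma_j:=P_0\oplus\gamma|_{[1/j,\,L]}\oplus\gamma|_{[0,\,1/j]},
\]
which is a $C^1$ arclength curve in $\Ap$ with $\B_p[\gamma_j]=\B_p[\gamma]$, and whose curvature at $s=0$ equals $k(1/j)\neq0$ by \eqref{eq:F3}. This violates the natural boundary condition from \eqref{H2}, and the pinned analogue \eqref{seq:pin} of Lemma~\ref{thm:Magic} finishes the argument. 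No odd extension, chord matching, or rescaling is needed, and the case $P_0=P_1$ causes no difficulty.

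Your approach, by contrast, has a genuine gap in step~(5). You claim ``the energy contribution of the shift of the domain is exactly zero'' because $f$ vanishes on flat parts, but this is only half right: when $L_N>0$ the \emph{removed} piece $\gamma|_{[L-1/j,\,L]}$ is flat and contributes zero, yet the \emph{added} piece $\gamma|_{[-c_j,0]}$ lies in the odd extension of the first loop and carries energy $\int_0^{c_j}|k|^p>0$. In Theorem~\ref{thm:pinned1*} this extra energy is absorbed via the global symmetry $k(L-s)=k(s)$ of Lemma~\ref{lem:curve-sym}, which a flat-core curve does not possess. You would then need the rescaling gain $(1-\lambda_j^{1-p})\int_a^b|k|^p$ to dominate $\int_0^{c_j}|k|^p$; but the chord-matching forces $1/j-c_j\sim y(c_j)^2/(2l)$, which is of much higher order in $1/j$ than $c_j$ itself, and without precise control on the vanishing rate of $k_{\rm loop}$ at the loop endpoints (not provided by \eqref{eq:F3} alone) the required inequality need not hold. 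The $l=0$ obstacle you flagged is also real, whereas the paper's cyclic shift handles it for free.
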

%%%%%%%%%%%%%%%%%%%%%%%%%%%%%%%%%%%%%%
\begin{proof}
Without loss of generality, we may assume that $\gamma(0)$ is an endpoint of a loop.
For each $j\geq j_0$ with some $1/j_0<L$, we define $\gamma_j:[0,L]\to \mathbf{R}^2$ by 
\begin{align*}
\gamma_j:=P_0\oplus \gamma|_{[\frac{1}{j},L]}\oplus \gamma|_{[0,\frac{1}{j}]},
\end{align*}
cf.\ Figure~\ref{fig:flatcore-unstable1}.
Then it is clear that $\gamma_j \in C([0,L];\R^2)$.
Property \eqref{eq:ConditionFromF12} ensures that $\gamma_j \in C^1([0,L];\R^2)$.
Since $\gamma \in C^2([0,L];\R^2)$, the signed curvature $k_j$ of $\gamma_j$ is bounded, in particular, $\gamma_j \in W^{2,\infty}(0,L; \R^2) \subset W^{2,p}(0,L; \R^2)$.
By definition we have $\gamma_j(0)=P_0$, $\gamma_j(L)=P_1$, and $\mathcal{L}[\gamma_j]=L$. 
Thus $\{\gamma_j\}_{j\geq j_0} \subset \Ap$ holds.

As in the proof of Theorem~\ref{thm:pinned2}, 
we see that $\{\gamma_j\}_{j\geq j_0}$ satisfies the conditions \eqref{seq:pin}-(i) and \eqref{seq:pin}-(ii).
By property \eqref{eq:F3}, the signed curvature $k_j$ of $\gamma_j$ satisfies $k_j(0)\neq0$, and hence the condition \eqref{seq:pin}-(iii) is satisfied. 
Thus $\{\gamma_j\}_{j\geq j_0}$ satisfies all the conditions in \eqref{seq:pin} and hence $\gamma$ is unstable. 
\end{proof}

%%%%%%%%%%%%%%%%%%%%%%%%%%%%%% 
\begin{center}
    \begin{figure}[htbp]
      \includegraphics[width=12cm]{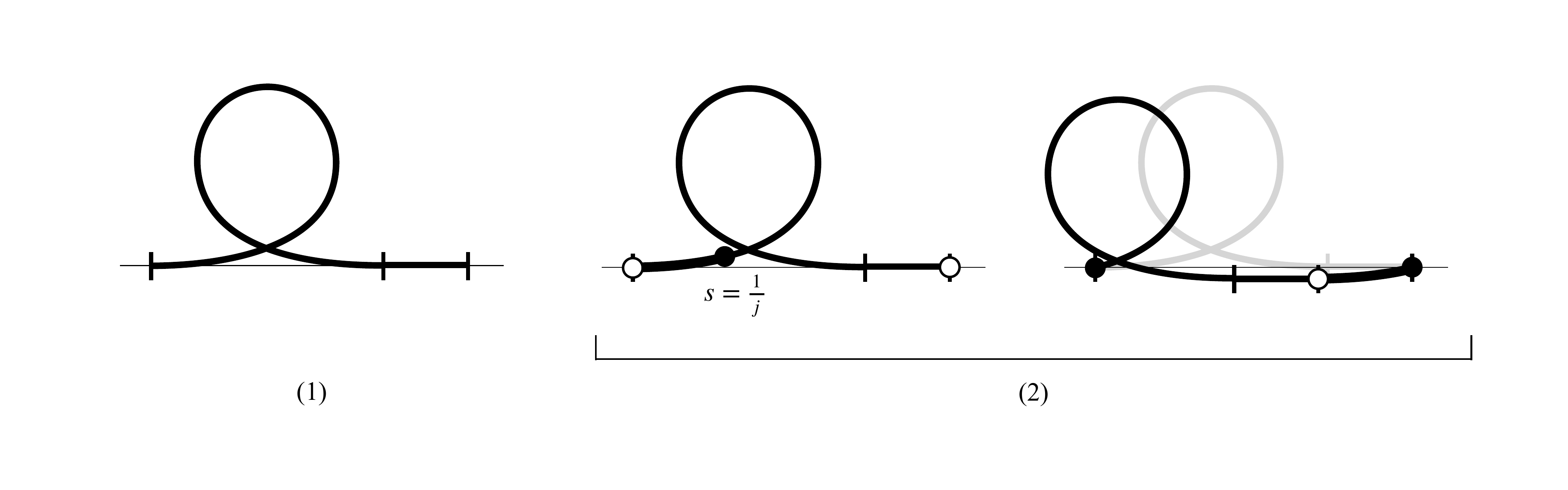}
  \caption{(1) A flat-core $p$-elastica with a loop touching an endpoint. (2) Construction of a perturbation $\gamma_j$.}
  \label{fig:flatcore-unstable1}
  \end{figure}
\end{center}
%%%%%%%%%%%%%%%%%%%%%%%%%%%%%% 

The next criterion ensures that a positive gap is necessary between two loops in opposite directions for stability even under the clamped boundary condition.

%%%%%%%%%%%%%%%%%%%%%%%%%%%%%%%%%%%%%%
\begin{proposition}\label{prop:flat-unstable-clamp}
Let $\gamma\in \Ac$ be a flat-core $p$-elastica. 
If $\gamma$ contains one loop and a part of a loop in the opposite direction with no segment between the two loops, 
then $\gamma$ is not a local minimizer of $\mathcal{B}_p$ in $\Ac$.
\end{proposition}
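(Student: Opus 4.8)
The plan is to reuse the abstract mechanism behind Lemma~\ref{thm:Magic}. Since $\B_p$ satisfies hypothesis \eqref{H1**} for all admissible clamped boundary data (\cite[Theorem 1.7]{MYarXiv2203}), it suffices to exhibit a sequence $\{\gamma_j\}\subset\Ac$ with (i) $\gamma_j\to\gamma$ in $W^{2,p}(0,L;\R^2)$, (ii) $\B_p[\gamma_j]\le\B_p[\gamma]$, and (iii) $\gamma_j\notin C^2$; then $\gamma$ is not a local minimizer, since otherwise each $\gamma_j$ with large $j$ would be one as well and hence of class $C^2$, contradicting (iii). So everything reduces to constructing a good competitor.

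First I would fix coordinates via \eqref{eq:F1}--\eqref{eq:F4}. Because $\B_p$ and the problem are invariant under reflection, I may assume the two offending loops are a full $\gamma^{+}_{\rm loop}$ on a parameter interval $[a,a+1]$ followed immediately, with no intervening segment, by $\gamma^{-}_{\rm loop}|_{[0,\tau]}$ on $[a+1,a+1+\tau]$ for some $\tau\in(0,1]$; set $s_{*}:=a+1$. Arguing as in the proof of Lemma~\ref{lem:curve-sym}, using \eqref{eq:F2}--\eqref{eq:F4} and the $C^{2}$-regularity of flat-core $p$-elasticae, I would record the local symmetry of $\gamma$ about the junction: its curvature is odd there, $k(s_{*}+u)=-k(s_{*}-u)$ for $|u|\le\tau$, whence also $\gamma'(s_{*}+u)=\gamma'(s_{*}-u)$ and $\gamma(s_{*}+u)=2\gamma(s_{*})-\gamma(s_{*}-u)$, while $k>0$ on $(s_{*}-1,s_{*})$ and $k<0$ immediately to the right of $s_{*}$.

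The core of the argument is the construction of $\gamma_j$, and this is where the opposite orientation is indispensable. The naive move used in the well-periodic cases --- cut symmetrically about $s_{*}$ and rotate the excised arc by $\pi$ --- is useless here, since the two-loop block is already exactly point-symmetric about $\gamma(s_{*})$ and is therefore unchanged by that rotation. To break this symmetry I would exploit the asymmetry carried by the shorter loop (length $\tau<1$ against the full loop of length $1$): cut $\gamma$ at a point inside the full $\gamma^{+}_{\rm loop}$ and at a second point near the junction chosen, by the identities above, so that the two cut-tangents coincide; then reconnect the piece prescribed by the junction symmetry, composing if necessary with a small rescaling of the full loop in the spirit of the proof of Theorem~\ref{thm:pinned1*} (legitimate because $\B_p$ is strictly convex, cf.\ \eqref{eq:1007-1}) so as to restore the length and both clamped conditions. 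By the tangent-matching the resulting $\gamma_j$ is a unit-speed $C^{1}$ curve lying in $\Ac$; the reconnection preserves $\int|k|^{p}$ up to a measure-preserving change of variables and any rescaling step only lowers it by \eqref{eq:1007-1}, so (ii) holds; letting the cut parameters tend to $s_{*}$ yields $\gamma_j\to\gamma$ in $W^{2,p}$, using a.e.\ convergence of the curvatures together with $\|k_j\|_{\infty}\le\|k\|_{\infty}$ as in the earlier proofs; and finally $\gamma_j\notin C^{2}$ because at a cut point its curvature has a genuine jump --- on one side it inherits a strictly positive value (from the interior of the $+$ loop), on the other a strictly negative one (from the interior of the $-$ loop), so the one-sided limits cannot agree.

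The main obstacle --- and the reason this argument is more \emph{ad hoc} than the previous ones --- lies entirely in that construction. Here there is no periodic tangent to exploit, the two-loop block is rigid under the obvious $\pi$-rotation, and one must simultaneously respect both clamped endpoints and both clamped tangents while keeping the competitor close to $\gamma$ and not increasing the energy. I expect this to require a short case distinction according to how the partial loop sits --- in particular whether it is truncated by a clamped endpoint, whether it already self-intersects, and what immediately precedes the full loop and follows the partial loop --- with each case handled by its own paste, the unifying ingredients being the junction symmetry recorded above and the strict convexity of $\B_p$.
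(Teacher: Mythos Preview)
Your framework is right: invoke Lemma~\ref{thm:Magic} via \eqref{H1**}, and your diagnosis that the naive $\pi$-rotation about the junction $s_*$ is useless (the block is point-symmetric there) is correct. But the heart of the proof is the explicit construction of $\gamma_j$, and you do not actually give one: you describe an intention (``cut at a point inside the full loop and a second point near the junction so the tangents coincide, then reconnect, rescaling if necessary'') and then anticipate rescaling via \eqref{eq:1007-1} and a case analysis on how the partial loop sits. That is the gap. As written, one cannot verify that the resulting curve lands in $\Ac$ with the correct clamped data, has length exactly $L$, and has $\B_p[\gamma_j]\le\B_p[\gamma]$, because the competitor is never specified.

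The paper's construction shows that none of those complications are needed: no rescaling, no cases, and the energy is preserved \emph{exactly}. Writing $\Gamma=\gamma^+_{\rm loop}\oplus\gamma^-_{\rm loop}|_{[0,a]}$ for the offending block, the competitor block is the four-piece rearrangement
\[
\Gamma_j:=\gamma^-_{\rm loop}|_{[0,\frac1j]}\oplus\gamma^+_{\rm loop}|_{[1-\frac1j,1]}\oplus\gamma^+_{\rm loop}|_{[0,1-\frac1j]}\oplus\gamma^-_{\rm loop}|_{[\frac1j,a]},
\]
and then $\gamma_j:=\gamma|_{[0,s_1]}\oplus\Gamma_j\oplus\gamma|_{[s_2,L]}$. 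Everything now checks by inspection. The four arclengths sum to $1+a$, so $\mathcal L[\gamma_j]=L$ exactly. The displacement telescopes to that of $\Gamma$, and the initial/final tangents of $\Gamma_j$ are $(\gamma^-_{\rm loop})'(0)=(1,0)$ and $(\gamma^-_{\rm loop})'(a)$ by \eqref{eq:F2}, identical to those of $\Gamma$; hence $\gamma_j\in\Ac$. The $C^1$ joins use only \eqref{eq:F2} and the reflectional symmetry \eqref{eq:F4} (which gives $(\gamma^-_{\rm loop})'(\tfrac1j)=(\gamma^+_{\rm loop})'(1-\tfrac1j)$). The energy is unchanged because the four pieces are a reshuffling of the same curvature profile. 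Finally, the curvature of $\Gamma_j$ jumps at $s=\tfrac1j$ from $-k_{\rm loop}(\tfrac1j)<0$ to $k_{\rm loop}(1-\tfrac1j)>0$ by \eqref{eq:F3}, so $\gamma_j\notin C^2$; and $k_j\to k$ a.e.\ with $\|k_j\|_\infty\le\|k\|_\infty$ gives the $W^{2,p}$ convergence. In short, the trick is a cyclic shift in the spirit of the proof of Theorem~\ref{thm:pinned2} (moving a tiny piece of the second loop to the front and re-wrapping the first loop through its $C^1$ closing point), not a rotate-and-rescale as in Theorem~\ref{thm:pinned1*}.
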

%%%%%%%%%%%%%%%%%%%%%%%%%%%%%%%%%%%%%%
\begin{proof}
Let $\gamma\in \Ac$ be a flat-core $p$-elastica satisfying the assumption.
This means that there is some $a\in(0,1]$ such that if we define
$$\Gamma:=\gamma^+_{\rm loop} \oplus \gamma^-_{\rm loop}|_{[0,a]},$$
then up to similarity and orientation of the parameter, the curve $\gamma$ can be decomposed as 
$$\gamma=\gamma|_{[0,s_1]}\oplus \Gamma \oplus \gamma|_{[s_2,L]}$$ 
for some $s_1, s_2\in[0,L]$ (where $L$ denotes the length after rescaling).
For each $j\geq j_0$ with some $j_0$ such that $1/j_0 < a$, we define 
\[
\Gamma_j:= \gamma^-_{\rm loop} |_{[0,\frac{1}{j}]} \oplus \gamma^+_{\rm loop} |_{[1-\frac{1}{j}, 1]} \oplus \gamma^+_{\rm loop} |_{[0,1-\frac{1}{j}]} \oplus \gamma^-_{\rm loop} |_{[\frac{1}{j}, a]}, 
\]
cf.\ Figure~\ref{fig:flatcore-unstable2}. 
Note that $\Gamma_j$ is a unit-speed $C^1$-curve by properties 
\eqref{eq:F2}, \eqref{eq:F4},
and \eqref{eq:ConditionFromF12}.
On the other hand, by property \eqref{eq:F3}, the curvature of $\Gamma_j$ is discontinuous at $s=1/j$ and hence $\Gamma_j$ is not of class $C^2$.

We define $\gamma_j:=\gamma|_{[0,s_1]}\oplus \Gamma_j \oplus \gamma|_{[s_2,L]}$.
Now we prove that the sequence $\{\gamma_j\}_{j\geq j_0}$ satisfies all the conditions in \eqref{seq}. 

We first check $\{\gamma_j\}_{j\geq j_0} \subset \Ac$. 
By definition, $\gamma_j$ is a $C^1$-curve and $\mathcal{L}[\gamma_j]=L$ holds.
Since $\gamma$ is a $C^2$-curve, the signed curvature of $\gamma_j$ is of bounded.
Therefore $\{\gamma_j\}_{j\geq j_0}$ is a $W^{2,p}$-curve. 
By construction, $\gamma_j$ satisfies the same boundary condition as $\gamma$.
Thus we have $\{\gamma_j\}_{j\geq j_0} \subset \Ac$.

The condition \eqref{seq}-(i) follows by straightforward calculations combined with the fact that the signed curvature of $\gamma^+_{\rm loop}$ is uniformly continuous.
The condition \eqref{seq}-(ii) follows since $\mathcal{B}_p[\Gamma]=\mathcal{B}_p[\Gamma_j]$.
The condition \eqref{seq}-(iii) follows from the fact that $\Gamma_j \notin C^2$. 
Consequently, by Lemma~\ref{thm:Magic} $\gamma$ is unstable in $\Ac$. 
\end{proof}

%%%%%%%%%%%%%%%%%%%%%%%%%%%%%% 
\begin{center}
    \begin{figure}[htbp]
      \includegraphics[width=12cm]{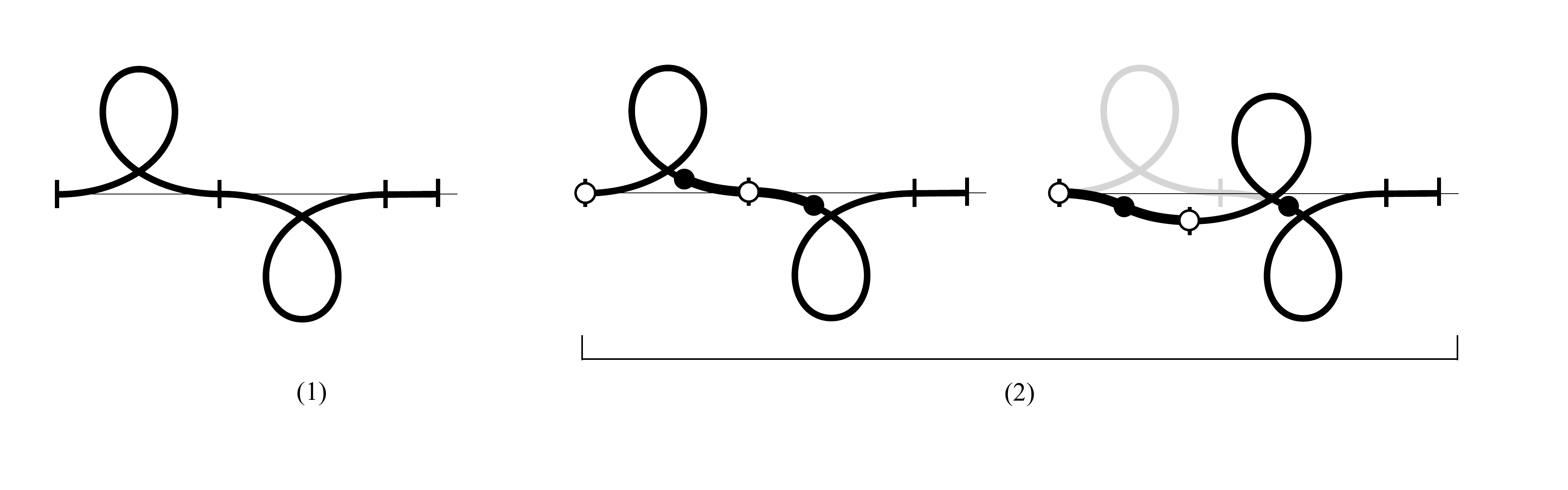}
  \caption{(1) A flat-core $p$-elastica with adjacent opposite loops. (2) Construction of a perturbation $\gamma_j$.}
  \label{fig:flatcore-unstable2}
  \end{figure}
\end{center}
%%%%%%%%%%%%%%%%%%%%%%%%%%%%%% 

The proof of Theorem~\ref{thm:unstable-flatcore} is now already complete.

\begin{proof}[Proof of Theorem~\ref{thm:unstable-flatcore}]
It directly follows by Propositions~\ref{prop:flat-unstable-pin} and \ref{prop:flat-unstable-clamp}.
\end{proof}

Now we introduce a new class of flat-core $p$-elasticae, which are not covered by the previous two instability results (and presumably stable). 
Let us first call a flat-core planar $p$-elastica $\gamma$ \emph{alternating} if up to similarity and reparametrization, $\gamma$ is given 
as in \eqref{eq:F1} with strictly positive $L_0, \ldots, L_N >0$.
\if0
\begin{align}\label{eq:alternating}
\gamma
= \gamma^{L_0}_{\rm seg} \oplus \bigg(\bigoplus_{j=1}^N\gamma^{\sigma_j}_{\rm loop} \oplus \gamma^{L_j}_{\rm seg} \bigg).
\end{align}
\fi
The strict positivity particularly means that the segments and the loops appear alternately.
By relaxing the alternating property, we now define the following class:

%%%%%%%%%%%%%%%%%%%%%%%%%%%%%%%%%%%%%%
\begin{definition}[Quasi-alternating]\label{def:alternating}
Let $\gamma$ be a flat-core planar $p$-elastica of the form \eqref{eq:F1} with some $N \in \N$, $\sigma_1, \ldots, \sigma_N\in \{+, -\}$, and $L_0, \ldots, L_N \geq0$. 
We call $\gamma$ \emph{quasi-alternating} if the following two conditions hold: 
\begin{itemize}
    \item[(i)] $L_0, L_N >0$.
    %There is $\delta>0$ such that the curvature $k$ of $\gamma$ satisfies $k \equiv 0$ in $[0,\delta) \cup (\delta, L]$;
    \item[(ii)] For $j\in\{1,\dots,N-1\}$, if $L_j=0$, then $\sigma_j=\sigma_{j+1}$.
\end{itemize}
\end{definition}
%%%%%%%%%%%%%%%%%%%%%%%%%%%%%%%%%%%%%%

\begin{proof}[Proof of Corollary~\ref{cor:flatcore-dichotomy}]
Let $\gamma$ be a pinned planar $p$-elastica. 
Then, by \cite[Theorem 1.1]{MYarXiv2209}, $\gamma$ is a wavelike $p$-elastica or a flat-core $p$-elastica.
If $\gamma$ is wavelike and stable, then Theorem~\ref{thm:wavelike_unique} implies that $\gamma$ is a one-fold arc.
On the other hand, suppose that $\gamma$ is flat-core and stable.
By \cite{MYarXiv2209} any pinned flat-core $p$-elastica is of the form \eqref{eq:F1}.
Proposition~\ref{prop:flat-unstable-pin} and Proposition~\ref{prop:flat-unstable-clamp} imply conditions (i) and (ii) in Definition \ref{def:alternating}, respectively, and hence $\gamma$ must be quasi-alternating.
\end{proof}

\section{Spatial elasticae}\label{sect:spatial}

Finally we discuss applications to spatial elasticae, i.e., critical points of the bending energy
$\B[\gamma]:=\int_\gamma|\kappa|^2ds$, where $\kappa:=\gamma_{ss}$,
among fixed-length $W^{2,2}$-curves in $\R^3$.
Spatial elasticae are also smooth by a standard bootstrap argument so that our trick is applicable well.
A classification of spatial elasticae is obtained by Langer--Singer (cf.\ \cite{Sin}).

Here we obtain new rigidity principles due to the presence of spatial perturbations.
All the results are concerning the admissible space $\Ac$ of curves in $W^{2,2}_\mathrm{arc}(0,L;\R^3)$ subject to the standard clamped boundary condition.

\begin{theorem}[Rigidity for spatially minimal elasticae]\label{thm:spatial_minimal}
    Let $\gamma\in\Ac$.
    If there are $0\leq s_1<s_2\leq L$ with $s_2-s_1<L$ such that all the vectors $\gamma(s_2)-\gamma(s_1)$, $\gamma'(s_1)$, $\gamma'(s_2)$ are orthogonal to a unit vector $\omega\in\R^3$, and such that either $\gamma''(s_1)\cdot\omega\neq0$ or $\gamma''(s_2)\cdot\omega\neq0$, then $\gamma$ is not globally minimal.
\end{theorem}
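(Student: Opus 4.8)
The plan is to re-run the cut-and-paste scheme of Theorem~\ref{thm:planar_global}, cutting at $s_1,s_2$, but replacing the $180^\circ$-rotation of the middle arc by the \emph{reflection} across the affine plane
\[
H:=\{x\in\R^3:\ \omega\cdot(x-\gamma(s_1))=0\}.
\]
Write $\Pi$ for the reflection of $\R^3$ across $H$; its linear part $D\Pi(v)=v-2(\omega\cdot v)\omega$ is the reflection of vectors that fixes $\omega^\perp$ and negates $\omega$. Arguing by contradiction, assume $\gamma$ is a global minimizer and define
\[
\bar\gamma:=\gamma|_{[0,s_1]}\ \oplus\ \Pi\!\circ\!\gamma|_{[s_1,s_2]}\ \oplus\ \gamma|_{[s_2,L]},
\]
\emph{without} reversing the parameter on the middle piece (here the outer piece is empty if $s_1=0$ or $s_2=L$); note that since $\Pi$ fixes $\gamma(s_1)$, the translations built into $\oplus$ are the identity, so $\bar\gamma$ literally equals $\gamma$ on $[0,s_1]\cup[s_2,L]$ and $\Pi\circ\gamma$ on $[s_1,s_2]$.

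First I would verify $\bar\gamma\in\Ac$ with $\B[\bar\gamma]=\B[\gamma]$, which uses the hypotheses \emph{precisely}. From $\omega\cdot(\gamma(s_2)-\gamma(s_1))=0$ both $\gamma(s_1),\gamma(s_2)$ lie on $H$, so $\Pi$ fixes them and the three pieces glue continuously; from $\gamma'(s_1),\gamma'(s_2)\in\omega^\perp$ we get $D\Pi\,\gamma'(s_i)=\gamma'(s_i)$, so the one-sided tangents agree and $\bar\gamma$ is a unit-speed $C^1$ curve. Since $\gamma$ is smooth on $[0,L]$, $\bar\gamma''\in L^\infty$, so $\bar\gamma\in W^{2,\infty}\subset W^{2,2}$; the outer pieces carry the clamped data of $\gamma$ unchanged (if $s_1=0$, $\Pi$ fixes $\gamma(0)=\gamma(s_1)$ and $\gamma'(0)$). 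Finally $\Pi$ is an isometry, so $|\bar\gamma''|\equiv|\gamma''|$ and $\B[\bar\gamma]=\B[\gamma]$. Hence $\bar\gamma$ is also a global minimizer, so by the standard analytic bootstrap for elasticae it is a real-analytic curve.

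Next I would extract the contradiction at an interior cut point. Since $s_2-s_1<L$ excludes $s_1=0=L-s_2$, at least one of $s_1,s_2$ lies in $(0,L)$; call it $s_*$. In the generic case $\omega\cdot\gamma''(s_*)\neq0$ this is immediate at the level of $C^2$, exactly as in the planar proof: across $s_*$ the function $\bar\gamma''$ jumps by $D\Pi\,\gamma''(s_*)-\gamma''(s_*)=-2(\omega\cdot\gamma''(s_*))\,\omega\neq0$, so $\bar\gamma\notin C^2$, contradicting smoothness of minimizers. In the remaining degenerate case (the only offending second derivative sits at an endpoint, while $\omega\cdot\gamma''(s_*)=0$) I would use analyticity: $C^\infty$-regularity of $\bar\gamma$ forces all one-sided derivatives to agree at $s_*$, i.e.\ $D\Pi\,\gamma^{(k)}(s_*)=\gamma^{(k)}(s_*)$ for every $k\ge1$, equivalently $\omega\cdot\gamma^{(k)}(s_*)=0$ for all $k\ge1$; together with $\omega\cdot\gamma(s_*)=\omega\cdot\gamma(s_1)$, the real-analytic function $g(s):=\omega\cdot\gamma(s)$ has vanishing Taylor tail at $s_*$, hence $g\equiv\omega\cdot\gamma(s_1)$ on $[0,L]$. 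But then $\omega\cdot\gamma''\equiv0$, contradicting $\omega\cdot\gamma''(s_1)\neq0$ or $\omega\cdot\gamma''(s_2)\neq0$.

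I expect the only real subtlety to be this endpoint bookkeeping: one must make sure the mismatch produced by $\Pi$ lands at a cut point in the open interval, and in the degenerate situation that is precisely where real-analyticity of global minimizers does the work. Everything else is a routine transcription of the planar argument, with the point-reflection replaced by the hyperplane reflection $\Pi$ and the scalar curvature replaced by the vector $\gamma''$; as usual, no smallness of the perturbation is needed since we only compare against the global minimum.
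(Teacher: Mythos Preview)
Your approach is essentially identical to the paper's: reflect the arc $\gamma|_{[s_1,s_2]}$ across the hyperplane $H=\gamma(s_1)+\omega^\perp$ to build an equal-energy competitor $\bar\gamma\in\Ac$, and then contradict the regularity of global minimizers via the jump of $\bar\gamma''$ at a cut point.

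The only difference is that the paper's proof is terser: it declares ``without loss of generality $\gamma''(s_1)\cdot\omega\neq0$'' (by reversing the parameterization) and reads off the $C^2$-discontinuity of $\bar\gamma$ at $s_1$. Your extra endpoint bookkeeping---the degenerate case where the only cut point with $\omega\cdot\gamma''\neq0$ is an endpoint of $[0,L]$, so that the interior cut point $s_*$ satisfies $\omega\cdot\gamma''(s_*)=0$ and no $C^2$-jump is visible there---is actually not covered by the paper's WLOG (reversal merely swaps which cut point is the endpoint). Your real-analyticity argument for that case is correct and closes this small gap, so your proof is in fact slightly more complete than the paper's while following the same idea.
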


\begin{proof}
    Suppose on the contrary that $\gamma$ is a global minimizer.
    Then $\gamma$ is of class $C^2$ in particular.
    Without loss of generality we may assume that $\gamma''(s_1)\cdot\omega\neq0$.
    We decompose $\gamma=\gamma|_{[0,s_1]}\oplus\gamma|_{[s_1,s_2]}\oplus\gamma|_{[s_2,L]}$.
    Let $P$ be a unique plane passing through $\gamma(s_1)$ and orthogonal to $\omega$.
    By the assumption the plane $P$ also passes through $\gamma(s_2)$ and is parallel to $\gamma'(s_1)$ and $\gamma'(s_2)$.
    Let $R$ denote the reflection about the plane $P$, and let $\tilde\gamma:=\gamma|_{[0,s_1]}\oplus R\gamma|_{[s_1,s_2]}\oplus\gamma|_{[s_2,L]}$.
    Then $\tilde\gamma\in\Ac$ by the property of $P$, and also $\B[\tilde\gamma]=\B[\gamma]$, so that $\tilde\gamma$ is also a global minimizer and hence of class $C^2$.
    However $\tilde\gamma''$ has discontinuity at $s_1$ since $\gamma''(s_1)\cdot\omega\neq0$ so that the vectors $\tilde\gamma''(s_1-0)=\gamma''(s_1)$ and $\tilde\gamma''(s_1+0)=\gamma''(s_1)-2(\gamma''(s_1)\cdot\omega)\omega$ do not coincide.
    This is a contradiction.
\end{proof}

\begin{theorem}[Rigidity for spatially stable elasticae]\label{thm:spatial_stable}
    Let $\gamma\in\Ac$.
    If there are $0\leq s_1<s_2\leq L$ with $s_2-s_1<L$ such that all the vectors $\gamma(s_2)-\gamma(s_1)$, $\gamma'(s_1)$, $\gamma'(s_2)$ are parallel, and such that either $|\gamma''(s_1)|\neq0$ or $|\gamma''(s_2)|\neq0$, then $\gamma$ is not locally minimal.
\end{theorem}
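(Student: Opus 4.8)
The plan is to run the same cut-and-paste contradiction as in Theorems~\ref{thm:planar_global} and \ref{thm:spatial_minimal}, but now using a rotation about a common \emph{axis} in place of a planar reflection. Suppose, for contradiction, that $\gamma$ is a local minimizer of $\B$ in $\Ac$. Since spatial elasticae are smooth --- in fact real-analytic, being solutions of the analytic Euler--Lagrange system --- $\gamma$ is in particular of class $C^2$. The hypothesis that $\gamma(s_2)-\gamma(s_1)$, $\gamma'(s_1)$, $\gamma'(s_2)$ are parallel says precisely that both endpoints $\gamma(s_1),\gamma(s_2)$ lie on a line $\ell$ whose direction is parallel to each of $\gamma'(s_1),\gamma'(s_2)$.

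Up to reversing the parameterization I may assume $|\gamma''(s_1)|\neq0$; in the main case $s_1\in(0,L)$, and the degenerate alternative is handled at the end. Let $R_\theta$ be the rotation through angle $\theta$ about the line $\ell$, regarded as an affine isometry of $\R^3$ fixing $\ell$ pointwise, with linear part $DR_\theta$, and set
\[
\gamma_\theta:=\gamma|_{[0,s_1]}\oplus R_\theta\big(\gamma|_{[s_1,s_2]}\big)\oplus\gamma|_{[s_2,L]}.
\]
Since $\gamma(s_1),\gamma(s_2)\in\ell$ are fixed by $R_\theta$ and $\gamma'(s_1),\gamma'(s_2)$ are parallel to the rotation axis hence fixed by $DR_\theta$, the curve $\gamma_\theta$ is a well-defined arclength $C^1$-curve (the translations in the concatenations are trivial), lies in $W^{2,\infty}\subset W^{2,2}$, obeys the same clamped boundary data as $\gamma$, and, because $R_\theta$ is an isometry so that $|\gamma_\theta''|=|\gamma''|$ a.e., satisfies $\B[\gamma_\theta]=\B[\gamma]$. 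Thus $\gamma_\theta\in\Ac$ and $\gamma_\theta\to\gamma$ in $W^{2,2}$ as $\theta\to0$.

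The key observation is that $\gamma_\theta\notin C^2$ for every small $\theta\neq0$: the one-sided limits of $\gamma_\theta''$ at $s_1$ are $\gamma''(s_1)$ and $DR_\theta\gamma''(s_1)$, and since $\gamma$ is unit-speed we have $\gamma''(s_1)\perp\gamma'(s_1)$, so $\gamma''(s_1)$ is orthogonal to the axis $\ell$; as $|\gamma''(s_1)|\neq0$ and $DR_\theta$ fixes only the vectors parallel to $\ell$ when $\theta\in(0,2\pi)$, these two limits differ. The contradiction now follows exactly as in the proof of Lemma~\ref{thm:Magic}: for $\theta$ sufficiently small, $\gamma_\theta$ lies in a neighborhood on which $\gamma$ is minimal and has the same minimal energy, so $\gamma_\theta$ is itself a local minimizer, hence $C^2$ --- contradicting $\gamma_\theta\notin C^2$. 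Therefore $\gamma$ is not locally minimal.

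The one delicate point --- the spatial analogue of the tacit reduction to $s_2<L$ used in the proof of Theorem~\ref{thm:spatial_minimal} --- is the degenerate configuration in which the only cut point carrying nonzero second derivative, say $s_1=0$, is an endpoint of $[0,L]$ while $|\gamma''(s_2)|=0$. One still rotates the initial arc $\gamma|_{[0,s_2]}$ about $\ell$, which preserves the clamped data at $0$ because $\gamma(0)\in\ell$ and $\gamma'(0)\parallel\ell$; however the resulting competitor is automatically $C^2$ at $s_2$, so one must invoke higher regularity. If $\gamma_\theta$ were smooth then $DR_\theta\gamma^{(n)}(s_2)=\gamma^{(n)}(s_2)$ for all $n$; let $n\geq3$ be minimal with $\gamma^{(n)}(s_2)\neq0$ (such $n$ exists, since otherwise $\gamma$ would agree with its tangent line to infinite order at $s_2$ and hence, by real-analyticity of elasticae, be a segment, contradicting $|\gamma''(0)|\neq0$). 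Repeatedly differentiating $|\gamma'|^2\equiv1$ and using $\gamma''(s_2)=\dots=\gamma^{(n-1)}(s_2)=0$ gives $\gamma^{(n)}(s_2)\perp\gamma'(s_2)$, hence $\gamma^{(n)}(s_2)$ is orthogonal to $\ell$ and is not fixed by $DR_\theta$ --- a contradiction. I expect it is precisely this endpoint bookkeeping, rather than the clean interior construction, that will be the main obstacle to a fully airtight write-up.
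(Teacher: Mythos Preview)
Your construction is correct and matches the paper's proof exactly: rotate the middle piece about the common axis $\ell$, observe that $\gamma_\theta\in\Ac$ with $\B[\gamma_\theta]=\B[\gamma]$ and $\gamma_\theta\to\gamma$ in $W^{2,2}$, and derive a $C^2$-discontinuity at the interior cut point from $\gamma''\perp\ell$ there.

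Your handling of the degenerate endpoint case ($s_1=0$ with $|\gamma''(s_2)|=0$, or its mirror) via real-analyticity and the first nonvanishing higher derivative is a genuine addition. The paper's proof simply writes ``without loss of generality we may assume that $|\gamma''(s_1)|\neq0$'' and checks the discontinuity at $s_1$, tacitly treating $s_1$ as an interior point; your observation that this reduction fails precisely when the only nonzero-curvature cut point sits at an endpoint is correct, and your remedy --- pass to the minimal $n\geq3$ with $\gamma^{(n)}(s_2)\neq0$, note $\gamma^{(n)}(s_2)\perp\gamma'(s_2)$ from differentiating $|\gamma'|^2\equiv1$, and use that a local minimizer must in fact be smooth (analytic) --- closes the gap cleanly.
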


\begin{proof}
    Suppose on the contrary that $\gamma$ is a local minimizer.
    Then $\gamma$ is of class $C^2$ in particular.
    Without loss of generality we may assume that $|\gamma''(s_1)|\neq0$.
    We decompose $\gamma=\gamma|_{[0,s_1]}\oplus\gamma|_{[s_1,s_2]}\oplus\gamma|_{[s_2,L]}$.
    Let $L$ be a unique line passing through $\gamma(s_1)$ and parallel to $\gamma'(s_1)$.
    By the assumption the line $L$ also passes through $\gamma(s_2)$ and is parallel to $\gamma'(s_2)$.
    Let $R_{\theta}$ denote a rotation about the axis $L$ through angle $\theta$, and let $\gamma_\theta:=\gamma=\gamma|_{[0,s_1]}\oplus R_\theta\gamma|_{[s_1,s_2]}\oplus\gamma|_{[s_2,L]}$.
    Then $\gamma_\theta\in\Ac$ by the property of $L$, and also $\gamma_\theta\to\gamma$ in $W^{2,2}$ as $\theta\to0$ with $\B[\gamma_\theta]=\B[\gamma]$, so that for any small $\theta$ the curve $\gamma_\theta$ is also a local minimizer and hence of class $C^2$.
    However $\gamma_\theta''$ has discontinuity at $s_1$ (whenever $
    \theta\neq0$) since $\gamma''(s_1)$ is orthogonal to $\gamma'(s_1)$ and hence to $L$, so that the non-zero vectors $\gamma_\theta''(s_1-0)$ and $\gamma_\theta''(s_1+0)$ form the angle $\theta\neq0$ due to $R_\theta$.
    This is a contradiction.
\end{proof}

Below we provide some concrete examples of applications of the above results. 

The first example is about the known fact that all closed planar elasticae except for a one-fold circle are unstable in $\R^3$.
This fact is shown by Langer--Signer \cite{LS_85} and also follows by Maddocks' analysis \cite{Maddocks1981,Maddocks1984}, both through explicit computations of second variations.
Theorem \ref{thm:spatial_stable} provides a geometric proof of this fact.

\begin{corollary}\label{cor:spatial-circle}
A twice (or more times) covered circle is unstable in the set of closed curves of fixed length in $\R^3$ (cf.\ Figure~\ref{fig:spatial-circle}).
\end{corollary}

%%%%%%%%%%%%%%%%%%%%%%%%%%%%%% 
\begin{center}
    \begin{figure}[htbp]
      \includegraphics[scale=0.2]{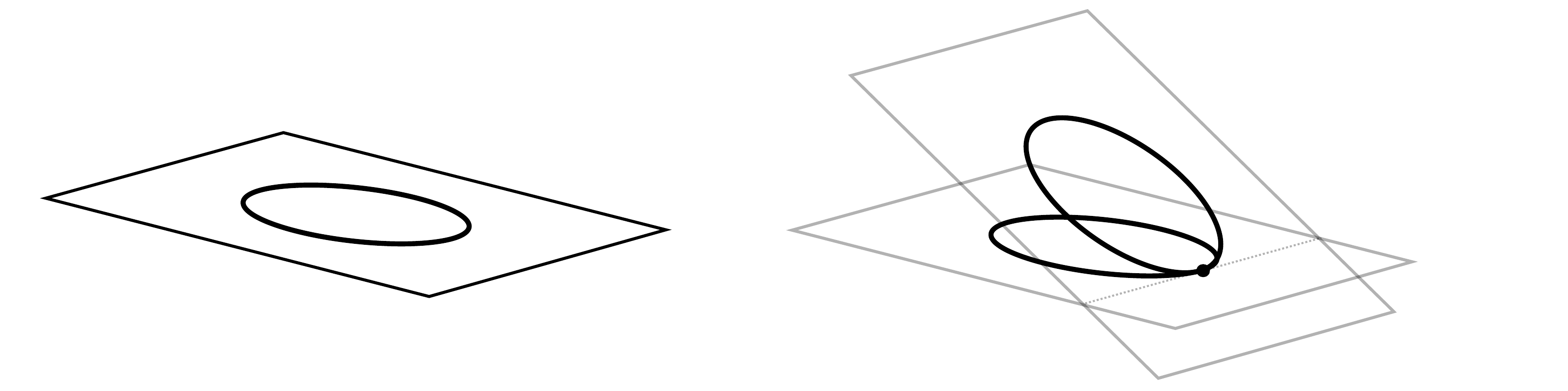}
  \caption{A more than two-fold circular elastica (left) and its perturbation for instability (right).}
  \label{fig:spatial-circle}
  \end{figure}
\end{center}
%%%%%%%%%%%%%%%%%%%%%%%%%%%%%% 

\begin{corollary}\label{cor:spatial-figure-8}
A once (or more times) covered figure-eight elastica is unstable in the set of closed curves of fixed length in $\R^3$ (cf.\ Figure \ref{fig:spatial-figure8}).
\end{corollary}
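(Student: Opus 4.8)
The plan is to reduce everything to Theorem~\ref{thm:spatial_stable}. Regarding the one-fold figure-eight elastica $\gamma$ as a closed $W^{2,2}$-curve in $\R^3$, observe that the clamped class $\Ac$ determined by $\gamma$'s own endpoints and tangents is contained in the set of closed curves of length $L$; hence if $\gamma$ were locally minimal among closed curves it would be locally minimal in $\Ac$ as well, so it suffices to contradict the latter. By Theorem~\ref{thm:spatial_stable} this amounts to producing $0<s_1<s_2<L$ with $s_2-s_1<L$ such that $\gamma(s_2)-\gamma(s_1)$, $\gamma'(s_1)$, $\gamma'(s_2)$ are parallel and $\gamma''(s_1)\neq0$; the theorem then rotates the sub-arc $\gamma|_{[s_1,s_2]}$ out of the plane about the common line to produce an equal-length, equal-energy competitor that is only $C^1$, not $C^2$ (cf.\ Figure~\ref{fig:spatial-figure8}).

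First I would record the relevant structure of $\gamma$. It is a well-periodic curve whose curvature $k$ has antiperiod $L/2$ and vanishes exactly at $s=0,\tfrac L2,L$; these three parameters all represent the unique self-intersection point, so the crossing is the only inflection point and $k$ has constant, opposite signs on the two loops $\gamma|_{[0,L/2]}$ and $\gamma|_{[L/2,L]}$. Since an orientation-reversing isometry flips the sign of signed curvature whereas an orientation-preserving one does not, and since $k(\tfrac L2+u)=-k(u)$, the second loop is the image of the first under the unique reflection $\Psi$ that fixes the crossing and maps $\gamma'(0)$ to $\gamma'(L/2)$; equivalently $\gamma(\tfrac L2+u)=\Psi(\gamma(u))$ for $u\in[0,L/2]$. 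The axis of $\Psi$ is the line through the crossing in the direction $e:=(\gamma'(0)+\gamma'(L/2))/|\gamma'(0)+\gamma'(L/2)|$, which is well defined since the crossing is transversal; thus $\Psi$ fixes $e$ and negates every vector orthogonal to $e$, and one also reads off that $\langle\gamma'(0),e\rangle>0$.

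Next I would take $s_1\in(0,\tfrac L2)$ to be a maximizer of $\langle\gamma(\cdot),e\rangle$ over $[0,\tfrac L2]$ and set $s_2:=s_1+\tfrac L2\in(\tfrac L2,L)$. The maximizer is genuinely interior: the crossing has the same $e$-coordinate as both endpoints of the loop, while the curve is transverse there to that level set (because $\langle\gamma'(0),e\rangle>0$), so neither endpoint is extremal. At an interior maximum $\gamma'(s_1)$ is orthogonal to $e$, hence, $\Psi$ being a reflection about a line in direction $e$, we get $\gamma'(s_2)=-\gamma'(s_1)$, parallel to $\gamma'(s_1)$; and (after placing the crossing at the origin) $\gamma(s_2)-\gamma(s_1)=\Psi(\gamma(s_1))-\gamma(s_1)$ equals $-2$ times the component of $\gamma(s_1)$ orthogonal to $e$, which is again orthogonal to $e$ --- so the three vectors are parallel. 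Finally $s_1$ lies in the open interval $(0,\tfrac L2)$ and is therefore not an inflection parameter, so $\gamma''(s_1)=k(s_1)\,n(s_1)\neq0$, while $s_2-s_1=\tfrac L2<L$. Thus Theorem~\ref{thm:spatial_stable} applies and $\gamma$ is not locally minimal in $\Ac$, which proves the corollary.

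I expect the main obstacle to be the second step, namely pinning down the reflection identity $\gamma(\tfrac L2+u)=\Psi(\gamma(u))$ with $\Psi$ fixing the distinguished direction $e$ --- this is precisely the classical dihedral symmetry of the figure-eight elastica. It can be derived from the antiperiodic and odd structure of $k$ together with the uniqueness part of the fundamental theorem of plane curves, or simply read off from the explicit $\cn$-parametrization of figure-eight $p$-elasticae in \cite{MYarXiv2203}. Once this symmetry is available, all remaining verifications are elementary and closely parallel the (simpler) argument for multiply covered circles in Corollary~\ref{cor:spatial-circle}.
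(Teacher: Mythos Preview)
Your proof is correct and matches the approach the paper indicates only through Figure~\ref{fig:spatial-figure8}: exploit the reflection symmetry $\gamma(\tfrac{L}{2}+u)=\Psi(\gamma(u))$ of the figure-eight to locate two interior points with tangents orthogonal to the axis $e$, so that the chord and both tangents lie in $e^\perp$ and Theorem~\ref{thm:spatial_stable} applies. The paper gives no argument beyond the picture, and your interior-maximizer construction and verification of the reflection identity supply exactly the details the figure leaves implicit.
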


%%%%%%%%%%%%%%%%%%%%%%%%%%%%%% 
\begin{center}
    \begin{figure}[htbp]
      \includegraphics[scale=0.2]{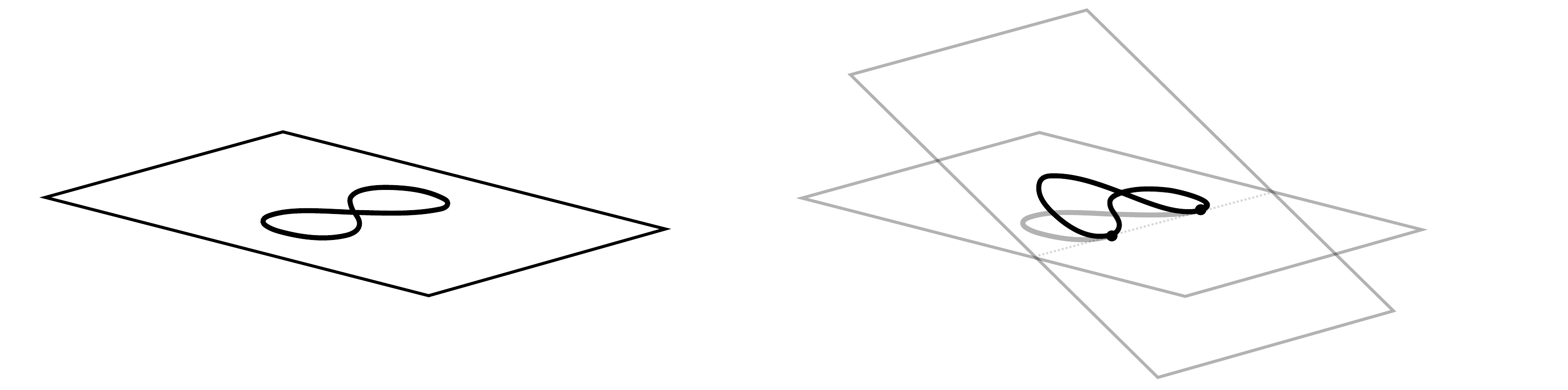}
  \caption{A one-fold figure-eight elastica (left) and its perturbation for instability (right).}
  \label{fig:spatial-figure8}
  \end{figure}
\end{center}
%%%%%%%%%%%%%%%%%%%%%%%%%%%%%% 

In the same way we can produce various examples of planar elasticae (both orbitlike and wavelike) which are stable in the plane but unstable in the space, cf.\ Figure~\ref{fig:spatial-wavelike}.
Some of such phenomena are also treated by Maddocks \cite{Maddocks1981,Maddocks1984}.

%%%%%%%%%%%%%%%%%%%%%%%%%%%%%% 
\begin{center}
    \begin{figure}[htbp]
      \includegraphics[scale=0.2]{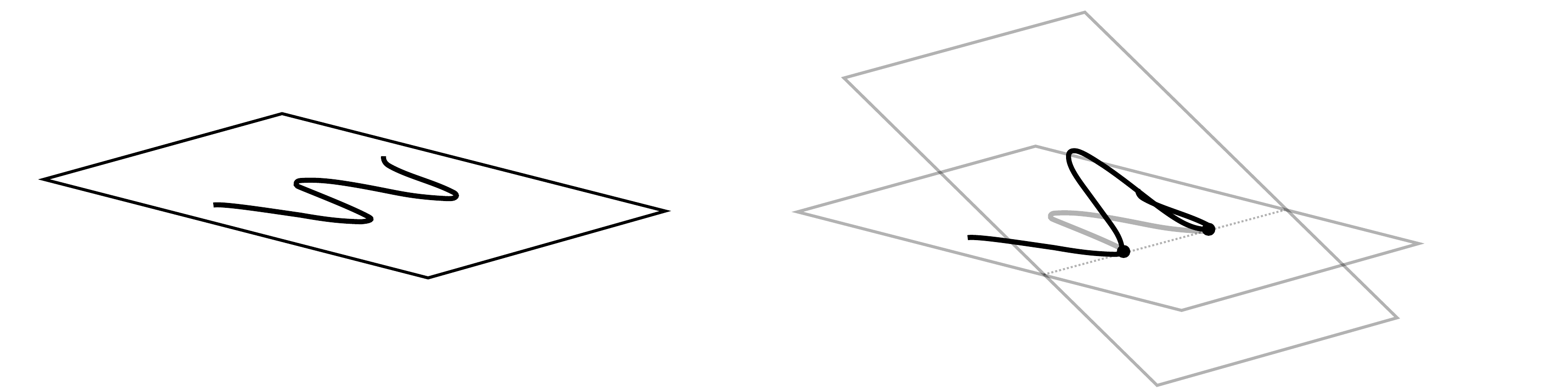}
  \caption{A wavelike elastica, to which Theorem~\ref{thm:spatial_stable} is applicable.}
  \label{fig:spatial-wavelike}
  \end{figure}
\end{center}
%%%%%%%%%%%%%%%%%%%%%%%%%%%%%% 

The final example is about a helix, an example of a purely spatial elastica.
By Langer--Singer \cite{LS_85} it is shown that if a helix has more than two turn, then it is unstable.
Theorem \ref{thm:spatial_minimal} gives a new rigidity for minimality.

\begin{corollary}\label{cor:spatial-helix}
A helical elastica with more than one turn is not minimal in the set of clamped curves of fixed length in $\R^3$ (cf.\ Figure~\ref{fig:spatial-helix}).
\end{corollary}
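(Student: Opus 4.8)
\textit{Proof proposal for Corollary~\ref{cor:spatial-helix}.}

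The plan is to obtain the statement as a direct application of Theorem~\ref{thm:spatial_minimal}, with a well-chosen pair of points one ``turn'' apart and a horizontal vector $\omega$ perpendicular to the tangent. Parameterize the helical elastica by arclength as $\gamma(s)=(a\cos(s/c),a\sin(s/c),bs/c)$ with $a>0$, $b\in\R$ and $c=\sqrt{a^2+b^2}$, so that its axis points in the $e_3$-direction and one full turn corresponds to a parameter interval of length $\ell:=2\pi c$. Since $\gamma$ has more than one turn, $L>\ell$, so I would set $s_1:=0$ and $s_2:=\ell$, which satisfies $s_2-s_1=\ell<L$ as required by the theorem.

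First I would record the two periodicities of the helix that make the geometric hypothesis hold. The tangent $\gamma'$ is $\ell$-periodic, so $\gamma'(s_1)=\gamma'(s_2)=:V$; and $\gamma(s_2)-\gamma(s_1)=(0,0,b\ell/c)$ is parallel to the axis $e_3$. Because the helix is genuinely curved ($a>0$), the unit vector $V$ is not parallel to $e_3$, so $\omega:=\dfrac{e_3\times V}{|e_3\times V|}$ is a well-defined unit vector, and by construction $\omega\perp V=\gamma'(s_1)=\gamma'(s_2)$ and $\omega\perp e_3\parallel\gamma(s_2)-\gamma(s_1)$. Hence all three vectors appearing in the hypothesis of Theorem~\ref{thm:spatial_minimal} are orthogonal to $\omega$.

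Next I would verify the second-order condition $\gamma''(s_1)\cdot\omega\neq0$. A direct computation gives $\gamma''(s)=-\tfrac{a}{c^2}(\cos(s/c),\sin(s/c),0)$, which is horizontal with length equal to the constant curvature $\kappa=a/c^2>0$, while $\omega$ is likewise horizontal and, up to sign, the radial unit vector $-(\cos(s_1/c),\sin(s_1/c),0)$; thus $\gamma''(s_1)\cdot\omega=\kappa\neq0$. All hypotheses of Theorem~\ref{thm:spatial_minimal} being met, $\gamma$ is not globally minimal in the set of fixed-length clamped curves in $\R^3$, which is the assertion.

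The argument is essentially a one-line invocation of Theorem~\ref{thm:spatial_minimal}, so there is no serious obstacle. The only points requiring a little care are to note that $V$ is never vertical for a genuine helix (the vertical case degenerating to a straight segment, which has zero curvature and is not a helix) so that $\omega$ is well defined, and to keep the explicit helix computation clean enough to see that $\gamma''(s_1)\cdot\omega$ equals the nonvanishing curvature rather than accidentally cancelling.
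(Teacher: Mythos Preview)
Your argument is correct and follows exactly the approach the paper intends: Corollary~\ref{cor:spatial-helix} is a direct application of Theorem~\ref{thm:spatial_minimal} with $s_1,s_2$ one full turn apart and $\omega$ the horizontal unit vector perpendicular to the common tangent, and your explicit helix computation cleanly verifies both the orthogonality conditions and $\gamma''(s_1)\cdot\omega=\kappa\neq0$. The paper merely references the figure in lieu of writing this out, so your version is in fact more complete.
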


%%%%%%%%%%%%%%%%%%%%%%%%%%%%%% 
\begin{center}
    \begin{figure}[htbp]
      \includegraphics[scale=0.175]{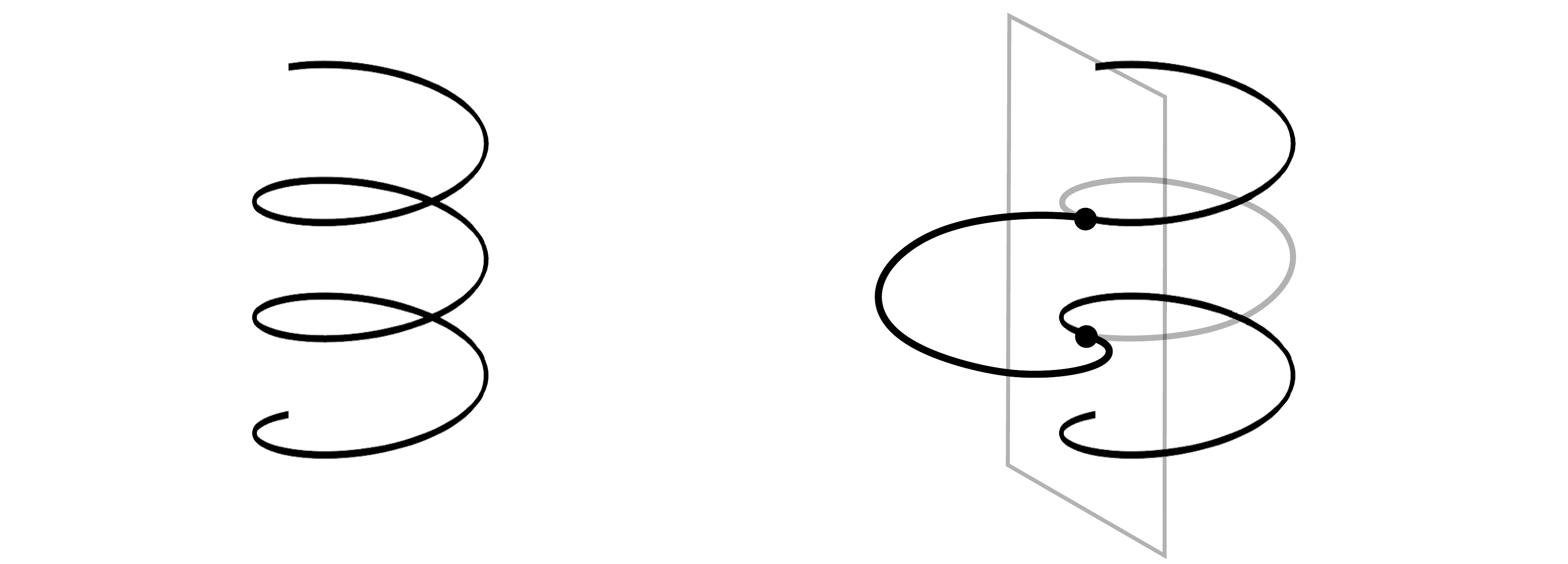}
  \caption{A helix (left) and its deformation for non-minimality (right).}
  \label{fig:spatial-helix}
  \end{figure}
\end{center}
%%%%%%%%%%%%%%%%%%%%%%%%%%%%%% 

We can also deduce by Theorem \ref{thm:spatial_minimal} that a spatial elastica is not minimal if it has a self-intersection (in its interior) at which one of the two curvature vectors are linearly independent from both the two tangent vectors.

We expect that our method also works for more general spatial ($p$-)elasticae or even other ambient spaces under some symmetry.

\appendix 

\section{Second variation for the $p$-bending energy}\label{app:second_variation}

% In this section, by comparing with another method, we again emphasize that our `cut-and-paste' method is robust.
% One of useful methods to investigate stability is to focus on the sign of the second derivative. 
% However, for the $p$-bending energy $\B_p$, it is not clear whether the second variation exists. 

In this section we formally compute the second variation of the $p$-bending energy $\B_p$ and observe that, in the singular regime $p\in(1,2)$, it is even possible that the second variation may not be well defined even at a critical point ($p$-elastica).
This fact suggests that the stability analysis of $p$-elasticae is substantially delicate compared to the classical case $p=2$.

First, we rigorously compute the second variation of the $p$-bending energy $\B_p$ under the absence of inflection points and the $C^2$-regularity of variations.
Recall from \cite[Theorem 1.7]{MYarXiv2203} that any planar $p$-elastica is of class $C^2$ and hence its signed curvature $k$ is continuous.
Let $T$ denote the unit tangent $T:=\gamma'$ and $N$ the unit normal such that $\gamma''=kN$.
Then we have the following

%%%%%%%%%%%%%%%%%%%%%%%%%%%%%%%%%%%%%%
\begin{proposition}\label{prop:2nd-Gdiff}
Let $p\in(1,\infty)$, $L>0$, and $\gamma:[0,L]\to\R^2$ be an arclength parametrized $p$-elastica.
Suppose that the signed curvature $k\in C([0,L])$ of $\gamma$ has no zero.
Then the second variation $d^2\mathcal{B}_p$ of $\mathcal{B}_p$ at $\gamma$ in a direction $\eta\in C^{2}([0,L];\R^2)$ is given by
\begin{align}\label{eq:2nd-Gdiff}
\begin{split}
\langle d^2\B_p[\gamma] ,  \eta \rangle 
:&\!=\ \frac{d^2}{d\varepsilon^2}\B_p[\gamma+\varepsilon \eta] \Big|_{\varepsilon=0} \\
&=\ \mathbf{I}[\gamma](\eta)+\mathbf{J}[\gamma](\eta)+\mathbf{K}[\gamma](\eta),
\end{split}
\end{align}
% where $\mathbf{I}[\gamma](\eta)$, $\mathbf{J}[\gamma](\eta)$, and $\mathbf{K}[\gamma](\eta)$ are defined by
% \begin{align}
% \mathbf{I}[\gamma](\eta)&:=
% \int_0^L |\gamma''|^p\Big[ (4p^2-1)(\gamma', \eta')^2 +(1-2p)|\eta'|^2 \Big]\,ds, \notag
% \\
% \mathbf{J}[\gamma](\eta)&:=
% \int_0^L  |\gamma''|^{p-2} \Big[ p(p-3)(\gamma'', \eta')^2 +2p(-2p+1)(\gamma'', \eta'')(\gamma', \eta') \notag  \\
% &\hspace{130pt}+2p(2p-3)(\gamma'', \eta')(\gamma', \eta'') \Big]\,ds, \notag \\
% \mathbf{K}[\gamma](\eta)&:=
% \int_0^Lp(p-1) |\gamma''|^{p-2} \Big[|\eta''|^2 +3(\gamma', \eta'')^2 \Big]\,ds. \label{eq:2nd-Gdiff-K}
% \end{align}
where
$\mathbf{I}[\gamma](\eta)$, $\mathbf{J}[\gamma](\eta)$, and $\mathbf{K}[\gamma](\eta)$ are defined by
\begin{align}
\mathbf{I}[\gamma](\eta)&:=
\int_0^L |k|^p\Big[ (4p^2-1)(T, \eta')^2 +(1-2p)|\eta'|^2 \Big]\,ds, \notag
\\
\mathbf{J}[\gamma](\eta)&:=
\int_0^L  |k|^{p-1} \Big[ p(p-3)(N, \eta')^2 +2p(-2p+1)(N, \eta'')(T, \eta') \notag  \\
&\hspace{130pt}+2p(2p-3)(N, \eta')(T', \eta'') \Big]\,ds, \notag \\
\mathbf{K}[\gamma](\eta)&:= p(p-1)
\int_0^L |k|^{p-2} \Big[|\eta''|^2 +3(T, \eta'')^2 \Big]\,ds. \label{eq:2nd-Gdiff-K}
\end{align}

\end{proposition}
%%%%%%%%%%%%%%%%%%%%%%%%%%%%%%%%%%%%%%
\begin{proof}
For $\eta \in W^{2,p}(0,L;\R^n)$ and small $\varepsilon\in\R$, define $\gamma_\varepsilon\in W^{2,p}(0,L;\R^n)$ by
\[
\gamma_{\varepsilon}(s) := \gamma(s) + \varepsilon \eta(s), \quad s\in[0,L].
\]
Then, the curvature vector $\kappa_{\varepsilon}$ of $\gamma_{\varepsilon}$ is given by 
\[
\kappa_{\varepsilon}= \frac{1}{| \gamma'_{\varepsilon}|^2 } \gamma''_{\varepsilon} - \frac{( \gamma'_{\varepsilon},  \gamma''_{\varepsilon})}{| \gamma'_{\varepsilon}|^4} \gamma'_{\varepsilon}.
\]
Hence, the $p$-bending energy of $\gamma_{\varepsilon}$ is represented by
\[ 
\B_p[\gamma_{\varepsilon}] = \int_{\gamma_{\varepsilon}} |\kappa_{\varepsilon} |^p   |\gamma'_{\varepsilon} | \,ds,
%=  \int_0^L \left| \frac{1}{|  \gamma'_{\varepsilon} |}  \gamma''_{\varepsilon} - \frac{( \gamma'_{\varepsilon},  \gamma''_{\varepsilon})}{| \gamma'_{\varepsilon}|^4}  \gamma'_{\varepsilon}\right|^p |  \gamma'_{\varepsilon} | \,ds,
\]
and its derivative along $\{\gamma_\varepsilon\}_\varepsilon$ can be computed as
\begin{align*}
\frac{d}{d\varepsilon} \B_p[\gamma_{\varepsilon}] 
&= \int_0^L |\kappa_{\varepsilon} |^p\frac{ (\gamma'_{\varepsilon} , \eta' )}{| \gamma'_{\varepsilon} |} \,ds 
+ \int_0^L p | \kappa_{\varepsilon} |^{p-2} 
\big( \kappa_{\varepsilon} , \partial_{\varepsilon} \kappa_{\varepsilon}  \big) |\gamma'_{\varepsilon}|\, ds,
\\
\partial_{\varepsilon} \kappa_{\varepsilon}
&= \frac{1}{|\gamma'_{\varepsilon}|^2} \eta''  -2 \frac{(\gamma'_{\varepsilon} , \eta') }{| \gamma'_{\varepsilon} |^4} \gamma''_{\varepsilon}\\
&\quad- \bigg( \frac{(\gamma'_{\varepsilon}, \gamma''_{\varepsilon})}{| \gamma'_{\varepsilon} |^4} \eta' 
+\Big[ \frac{( \gamma'_{\varepsilon}, \eta'')+(\gamma''_{\varepsilon}, \eta') }{| \gamma'_{\varepsilon} |^4} -4 \frac{(\gamma'_{\varepsilon}, \gamma''_{\varepsilon}) (\gamma'_{\varepsilon}, \eta') }{| \gamma'_{\varepsilon} |^6}  \Big] \gamma'_{\varepsilon} \bigg).
\end{align*}
From this formula, the second derivative along $\{\gamma_\varepsilon\}_\varepsilon$ can also be computed as 
\begin{align*}
\frac{d^2}{d\varepsilon^2} \B_p[\gamma_\varepsilon] 
&= 2\int_0^L p|\kappa_\varepsilon|^{p-2}\big(\kappa_{\varepsilon}, \partial_\varepsilon \kappa_\varepsilon \big)\frac{ (\gamma'_{\varepsilon} , \eta' )}{| \gamma'_{\varepsilon} |} \,ds 
+ \int_0^L  | \kappa_{\varepsilon} |^{p} 
\bigg( \partial_\varepsilon \Big(\frac{ \gamma'_{\varepsilon}}{| \gamma'_{\varepsilon} |} \Big), \eta' \bigg)\, ds \\ 
&\quad + \int_0^L p(p-1) |\kappa_\varepsilon|^{p-2} \big|\partial_\varepsilon \kappa_{\varepsilon} \big|^2 |\gamma'_{\varepsilon}| \,ds 
+ \int_0^L  p| \kappa_{\varepsilon} |^{p-2} 
\big(\kappa_{\varepsilon}, \partial^2_\varepsilon \kappa_\varepsilon \big)|\gamma'_{\varepsilon}|\, ds.
\end{align*}
\if0
Now, since $|\gamma'|\equiv 1$, $(\gamma', \gamma'')=0$, and $\kappa_0 =\gamma''$ hold, we have
\begin{align}
\frac{d^2}{d\varepsilon^2} \B_p[\gamma_\varepsilon] \Big|_{\varepsilon=0}
&= 2\int_0^L p|\gamma''|^{p-2}\big(\gamma'', \dot{m} \big) (\gamma', \eta' ) \,ds \notag\\
&\quad + \int_0^L  |\gamma''|^{p} 
\big( \eta'- (\gamma',\eta') \gamma', \eta' \big)\, ds \notag\\ 
&\quad + \int_0^L p(p-1) |\gamma''|^{p-2} |\dot{m}|^2 \,ds 
+ \int_0^L  p|\gamma''|^{p-2} 
(\gamma'', \ddot{m} ) \, ds, \notag
\end{align}
\fi
Noting that $|\gamma'|\equiv1$ and $(\gamma', \gamma'')=0$, we have
\begin{align*}
%\dot{m}=
\partial_\varepsilon \kappa_\varepsilon \Big|_{\varepsilon=0}
&=\eta''-2(\gamma', \eta')\gamma''+\big( (\gamma', \eta'') + (\gamma'', \eta') \big) \gamma', \\
%\big(m, \ddot{m} \big)=
\big(\kappa_\varepsilon, \partial^2_\varepsilon \kappa_\varepsilon \big)\Big|_{\varepsilon=0}
&=-\,4{(\gamma',\eta')}(\gamma'',\eta'') + \big(-2 |\eta'|^2 +8(\gamma',\eta')^2 \big)|\gamma''|^2 \\
&\quad -2\big(-2(\gamma',\eta'') +8(\gamma'',\eta')\big)(\gamma'',\eta'). 
\end{align*}
Consequently, using $\gamma'=T$ and $\gamma''=\kappa_0 =kN$, we obtain the desired formula as in \eqref{eq:2nd-Gdiff}.
\end{proof}

%%%%%%%%%%%%%%%%%%%%%%%%%%%%%%%%%%%%%%

\begin{remark}[Ill-definedness for $W^{2,p}$-variations: The singular case $p<2$]
We already notice that for $p\in(1,2)$ the second variation may diverge in the whole energy space $W^{2,p}$.
Indeed, $\mathbf{K}[\gamma](\eta)\geq p(p-1)k_*\int_0^L|\eta''|^2ds$ holds for $k_*:=\min|k|>0$, while $\mathbf{I}[\gamma](\eta)$ and $\mathbf{J}[\gamma](\eta)$ are always convergent by H\"older's inequality.
This means that the second variation formula in Proposition \ref{prop:2nd-Gdiff} makes sense for any $\eta\in W^{2,p}(0,L;\R^2)$ if and only if $p\geq 2$.
\end{remark}
%%%%%%%%%%%%%%%%%%%%%%%%%%%%%%%%%%%%%%

The above fact suggests that in the singular regime $p\in(1,2)$ it is delicate to handle second variations.
However, if one focuses on instability, then it may be sufficient to look at smooth variations only (e.g.\ as in \cite{GPT23}).
For smooth variations $\eta$, Proposition \ref{prop:2nd-Gdiff} ensures that every term is well defined at least when the curvature $k$ has no zero.
Hence, the main issue is reduced to the case in the presence of zeroes of $k$, where the term $\mathbf{K}[\gamma](\eta)$ involving $|k|^{p-2}$ may diverge.
In \cite{MYarXiv2203} it is shown that the case of wavelike $p$-elasticae has the worst regularity.
For any arclength parametrized wavelike $p$-elastica, the signed curvature $k$ is given by $k(s)=a\cn_p(a(s-s_0),q)$ for some $a>0$, $q\in(0,1)$, and $s_0\in \R$ (see \cite[Definition 3.4]{MYarXiv2203} for $\cn_p$). 
Thus the integrability of $|k|^{p-2}$ is reduced to the following lemma.

%%%%%%%%%%%%%%%%%%%%%%%%%%%%%%%%%%%%%%
\begin{lemma}\label{lem:cnp-L^(p-2)}
Let $p\in (1,\infty)$, $q\in (0,1)$, and $\cn_p(\cdot, q)$ be the $p$-elliptic cosine function.
Let $U$ be a neighborhood of zero of $\cn_p(\cdot, q)$. 
Then, 
\[
\cn_p(\cdot, q) \in L^{p-2}(U) \quad \text{if and only if} \quad p>\frac{3}{2}.
\]
\end{lemma}
%%%%%%%%%%%%%%%%%%%%%%%%%%%%%%%%%%%%%%
\begin{proof}
First, recall that the $p$-elliptic cosine function $\cn_p(\cdot, q)$ is defined by 
\[
\cn_p(x,q):=|\cos \am_{1,p}(x,q)|^{\frac{2}{p}-1}\cos \am_{1,p}(x,q), 
\]
where $\am_{1,p}(\cdot,q)$ is the inverse function of 
\[
x\mapsto \mathrm{F}_{1,p}(x,q) := \int_0^x \frac{|\cos \theta|^{1-\frac{2}{p} } }{\sqrt{1-q^2 \sin^2\theta} }\,d\theta.
\]
Let $ \K_{1,p}(q):= \mathrm{F}_{1,p}({\pi}/{2},q)$.
Then, we see that the set of zero points of $\cn_p(\cdot, q)$ is given by
\[
Z_{p,q}:=\Set{ x\in \R | \cn_p(x,q)=0 } = \Set{ (2n+1) \K_{1,p}(q) }.
\]
By periodicity, it suffices to consider a neighborhood of $ \K_{1,p}(q)$. 
For a sufficiently small $\delta>0$, we have
\begin{align*}
\int_{ \K_{1,p}(q)-\delta}^{ \K_{1,p}(q)+\delta} |\cn_p(x,q)|^{p-2}\,dx
&=\int_{ \K_{1,p}(q)-\delta}^{ \K_{1,p}(q)+\delta} \Big( |\cos \am_{1,p}(x,q)|^{\frac{2}{p}}\Big)^{p-2}\,dx \\
&=\int_{ \frac{\pi}{2}-\delta'}^{ \frac{\pi}{2}+\delta'} \Big( |\cos y|^{\frac{2}{p}}\Big)^{p-2} \frac{|\cos y|^{1-\frac{2}{p}}}{\sqrt{1-q^2\sin^2 y}}\,dy,
\end{align*}
where in the last equality we used the change of variables $y=\am_{1,p}(x,q)$ and set $\delta'>0$ by $\frac{\pi}{2}+\delta'=\am_{1,p}(\K_{1,p}(q)+\delta,q)$.
Since $y\mapsto 1/\sqrt{1-q^2\sin^2 y}$ is continuous, the integrability of $|\cn_p(\cdot,q)|^{p-2}$ is reduced to that of $ |\cos y|^{3-\frac{6}{p}}$ around $y=\frac{\pi}{2}$.
This is equivalent to $3-\frac{6}{p}>-1$, and hence we obtain the desired conclusion. 
\end{proof}

\begin{remark}[Ill-definedness for smooth variations: The highly singular case $p\leq\frac{3}{2}$]
Now we find that the second variation for wavelike $p$-elasticae with $p\in (1,\frac{3}{2}]$ may not exist even if $\eta$ is smooth.
Since we still have $|\mathbf{I}[\gamma](\eta)|,|\mathbf{J}[\gamma](\eta)| < \infty$, again we only need to take care of the term $\mathbf{K}[\gamma](\eta)$.
Let $\gamma:[0,L]\to\R^2$ be an arclength parametrized wavelike $p$-elastica with some $z\in(0,L)$ such that $k(z)=0$.
%Then, with the choice of $\eta$ to satisfy $\eta''=\gamma'$ in a neighborhood of $z$, we see that $\mathbf{K}[\gamma](\eta)$ defined by \eqref{eq:2nd-Gdiff-K} contains %as a singular point
%\begin{align}\label{eq:2nd-Gdiff-singular}
%4p(p-1)a^{p-3} \int_U |\cn_p(s,q)|^{p-2} \,ds, 
%\end{align}
%where $U$ is some neighborhood of zero of $\cn_p(\cdot, q)$. 
Consider $\eta\in C^2(0,L;\R^2)$ such that $|\eta''(z)|\neq0$; then there is $c_*>0$ such that $|\eta''|\geq c_*$ holds in a neighborhood $U$ of $z$.
Then, %noting that the integrand of $\mathbf{K}$ is nonnegative, 
by the change of variables we see that 
\begin{align} \nonumber%\label{eq:2nd-Gdiff-singular}
\mathbf{K}[\gamma](\eta) \geq p(p-1)c_*^2 \int_{U} |k(s)|^{p-2} \,ds =  p(p-1)c_*^2a^{p-3} \int_{\tilde{U}} |\cn_p(x,q)|^{p-2} \,dx,
%\geq 4p(p-1) \int_{U} |k(s)|^{p-2} \,ds =  4p(p-1)a^{p-3} \int_{\tilde{U}} |\cn_p(x,q)|^{p-2} \,dx, 
\end{align}
where $\tilde{U}$ is a neighborhood of a zero of $\cn_p(\cdot, q)$.
Thus we deduce by Lemma~\ref{lem:cnp-L^(p-2)} that %integral \eqref{eq:2nd-Gdiff-singular} 
$\mathbf{K}[\gamma](\eta)$ diverges if (and in fact only if) $p\in(1,\frac{3}{2}]$.
\end{remark}

\subsection*{Acknowledgements}
The first author is supported by JSPS KAKENHI Grant Numbers 18H03670, 20K14341, and 21H00990, and by Grant for Basic Science Research Projects from The Sumitomo Foundation.
The second author is supported by JSPS KAKENHI Grant Number 22K20339.

%%%%%%%%%%%%%%%%%%%%%%%%%%%%%%%%%%%%%%
%%%%%%%%%%%%%%%%%%%%%%%%%%%%%%%%%%%%%%
%%%%%%%%%%%%%%%%%%%%%%%%%%%%%%%%%%%%%%
%%%%%%%%%%%%%%%%%%%%%%%%%%%%%%%%%%%%%%
%%%%%%%%%%%%%%%%%%%%%%%%%%%%%%%%%%%%%%
%%%%%%%%%%%%%%%%%%%%%%%%%%%%%%%%%%%%%%

\bibliographystyle{abbrv}
\bibliography{ref_Miura-Yoshizawa-ver5}

\end{document}